\newcommand{\p}{\mathbb{P}}
\newcommand{\N}{\mathbb{N}}
\newcommand{\Z}{\mathbb{Z}}
\newcommand{\C}{\mathbb{C}}
\newcommand{\R}{\mathbb{R}}
\newcommand{\RR}{\mathcal{R}}
\newcommand{\U}{\mathcal{U}}
\newcommand{\V}{\mathcal{V}}
\newcommand{\LL}{\mathcal{L}}
\newcommand{\AAA}{\mathcal{A}}
\newcommand{\f}{\varphi}
\newcommand{\G}{\mathcal{G}}
\newcommand{\Lie}{\mathop{\rm Lie}\nolimits}
\newcommand{\conv}{\mathop{\rm Conv}\nolimits}
\newcommand{\lcm}{\mathop{\rm lcm}\nolimits}
\newcommand{\inv}{\mathop{\rm inv}\nolimits}
\DeclareMathOperator{\vol}{vol}
\DeclareMathOperator{\sgn}{sgn}
\DeclareMathOperator{\Pf}{Pf}
\newtheorem{theorem}{Theorem}[section]
\newtheorem{proposition}[theorem]{Proposition}
\newtheorem{lemma}[theorem]{Lemma}
\newtheorem{corollary}[theorem]{Corollary}
\newtheorem{remark}[theorem]{Remark}
\newtheorem{example}[theorem]{Example}
\newtheorem{notation}[theorem]{Notation}
\newtheorem{conjecture}[theorem]{Conjecture}
\newtheorem*{question}{Question}
\newtheorem{definition}[theorem]{Definition}
\DeclareMathOperator{\Spec}{Spec}
\def\CC{\mathbb{C}}
\begin{document}
\title{Toric geometry of path signature varieties}
\author{Laura Colmenarejo\footnote{Laura Colmenarejo, University of Massachusetts at Amherst (Amherst, US), \newline \texttt{laura.colmenarejo.hernando@gmail.com}}, 
Francesco Galuppi\footnote{Francesco Galuppi, University of Trieste (Trieste, Italy), \texttt{fgaluppi@units.it}},
Mateusz Micha\l{}ek\footnote{Mateusz Micha\l{}ek, Max Planck Institute for Mathematics in the Sciences (Leipzig, Germany), \texttt{michalek@mis.mpg.de}, and Polish Academy of Sciences,  
Institute of Mathematics (Warsaw, Poland), and Aalto University (Aalto, Finland). MM was supported by the Polish National Science Center Project 2013/08/A/ST1/00804 affiliated at the University of Warsaw.}
\thanks{The authors would like to thank Lara Bossinger for the discussion in the early stage of this project, Avinash Kulkarni for the helpful suggestions and Joscha Diehl for his feedback.}}
\maketitle

\begin{abstract}
In stochastic analysis, a standard method to study a path is to work with its signature. This is a sequence of tensors of different order that encode information of the path in a compact form. When the path varies, such signatures parametrize an algebraic variety in the tensor space. The study of these signature varieties builds a bridge between algebraic geometry and stochastics, and allows a fruitful exchange of techniques, ideas, conjectures and solutions.

In this paper we study the signature varieties of two very different classes of paths. The class of rough paths is a natural extension of the class of piecewise smooth paths. It plays a central role in stochastics, and its signature variety is toric. The class of axis-parallel paths has a peculiar combinatoric flavour, and we prove that it is toric in many cases.
\end{abstract}

\section{Introduction}\label{sec:intro}

A \emph{path} is a continuous map $X\colon [0,1]\to\R^d$. This very simple mathematical object can be used to interpret a wide range of situations. From a physical transformation to a meteorological model, from a medical experiment to the stock market, everything that involves parameters changing with time can be described by a path. This makes paths priceless tools in many branches of mathematics, as well as in a number of applied sciences. The downside is that, being a continuous object, explicit computations on a path are not easy to handle. A typical way to overcome this problem %, when we study something complicated,
 is to find invariants that are simpler to understand and %that
  can provide %hopefully give
   us enough information. %Any mathematician has met several examples: the genus of a Riemann surface, the moments of a random variable, the fundamental group of a topological space, and so on.

For paths, this problem was faced in \cite{Chen}. Assume that $X$ is piecewise smooth, and fix $k\in\N$. Let $X_i$ be the composition of $X$ with the projection to the $i$-th coordinate. Chen defined the \emph{$k$-th signature} of $X$ to be the order $k$ tensor $\sigma^{(k)}(X)$ whose $(i_1\ldots i_k)$-th entry is %the real number given by the iterated integral
\[\int_{0}^{1}\int_{0}^{t_k}\dots\int_{0}^{t_3}\int_{0}^{t_2}\dot{X}_{i_1}(t_1)\cdot\ldots\cdot\dot{X}_{i_k}(t_k)dt_1\dots dt_k.
\]
By convention, $\sigma^{(0)}(X)=1$. The sequence
\[\sigma(X):=(\sigma^{(k)}(X)\mid k\in\N)\]
is called the \emph{signature} of $X$. Sometimes it is also convenient to consider a truncated signature $\sigma^{\le m}(X):=(\sigma^{(k)}(X)\mid k\in\{0,\dots,m\})$.
In this context, it is natural to ask how much these tensors tell us about $X$. In \cite[Theorem 4.1]{chenuniqueness} Chen proved that, up to a mild equivalence relation, the signature allows to uniquely recover a piecewise smooth path. %This means that signatures are useful tools to encode in a compact form the information carried by paths.

Signatures are  appreciated not only in stochastics, but also in topological data analysis, financial mathematics or machine learning among others (see \cite{GyurkoLyonsKontkowskiField,ChevyrevKormilitzin, ChevyrevNandaOberhauser, ChevyrevOberhauser}). Chen's iterated integrals have deep connections with de Rham homotopy theory, as shown in \cite{Hain2002}. In this paper we are interested in the geometric side, illustrated in Section \ref{subsec:geometric side}. We study the so-called \emph{signature varieties} -- roughly speaking the geometric locus of all possible signatures. In Section \ref{sec:rough veronese} we consider the variety $\RR_{d,k,m}$ of signatures of rough paths.
Our results often rely on toric geometry, which allows us to answer several questions about their equations and singularities:
\begin{itemize}
\item Answering a question raised in \cite{AFS18} and \cite{G18}, we show that the ideal of $\RR_{d,k,m}$ may be generated in arbitrary high degree - Proposition \ref{pro:rough veronese is not gen in bounded degree}. However, we prove that quadratic polynomials define a (possibly reducible) variety, of which $\RR_{d,k,m}$ is an irreducible component - Proposition \ref{prop:quad}.
\item We characterize the cases in which $\RR_{d,k,m}$ is an embedding of the weighted projective space. We find conditions that make it (projectively) normal and examples when it is not - Subsection \ref{sub:GeomRVV}.
\item We study the  asymptotic behavior of the degree of $\RR_{d,k,m}$ and we give explicit formulas for some special cases - Subsection \ref{sub:degRVV}.
\end{itemize} 

In Section \ref{sec:axis parallel} we study the geometry of the signature variety $\AAA_{\nu,k}$ of axis parallel paths:
\begin{itemize}
\item We provide an easy, combinatorial parametrization of the variety - Lemma \ref{lem:CombDescription}.
\item We apply the above description to prove that $\AAA_{\nu,k}$ is toric in several cases - Section \ref{sub: axis toricness}. We study the dimension of $\AAA_{\nu,k}$ exhibiting defective cases - Section \ref{sub:axisDim}.
\end{itemize}

Finally, we use our knowledge of axis paths to prove a general formula for the determinant of the signature matrix of any path, Theorem \ref{thm:det is a square} and Corollary \ref{cor:shuffle determinant is a shuffle square}.

In order to present those results we need to recall the algebraic background.
%describe those varieties, we recall the algebraic background.

\subsection{The tensor algebra}
The $k$-th signature of a path $X$ belongs to $(\R^d)^{\otimes k}$. We now introduce an ambient space for the whole signature $\sigma(X)$.

\begin{definition}
	The \emph{tensor algebra} over $\R^d$ is the graded $\R$-vector space
	\[T((\R^d)):= %\bigoplus_{k\in\N}(\R^d)^{\otimes k}	=
	\R\times\R^d\times(\R^d)^{\otimes 2}\times\ldots\]
	of formal power series in the non-commuting variables $x_1,\dots,x_d$. It is an $\R$-algebra with respect to the tensor product%, and we denote by $p_k:T((\R^d))\to(\R^d)^{\otimes k}$ the projection
	. The algebraic dual of $T((\R^d))$ is the graded free $\R$-algebra
	\[T(\R^d)=\R\langle x_1,\dots,x_d\rangle\]
	of polynomials in the non-commuting variables $x_1,\dots,x_d$. %It is the unique free algebra over $x_1,\dots,x_d$.
\end{definition}

These spaces and their rich algebraic structures are well studied. In this section we recall the features we need, and we refer to \cite{Reutenauer} for a detailed treatment. Next result, proven in \cite[Section 2]{C57}, gives a first taste of the correlation between the tensor algebra and signatures of paths. Recall that the \emph{concatenation} of two paths $X$ and $Y$ is the path $X\sqcup Y\colon [0,1] \longrightarrow \mathbb{R}^d$ given by
\[(X\sqcup Y)(t)=\begin{cases} X(2t) & \mbox{ if } t\in \left[ 0,\frac{1}{2}\right], \\
X(1)+Y(2t-1) & \mbox{ if } t\in \left[\frac{1}{2},1\right].
\end{cases}
\]
%$X$ on $[0,1]$ and by $Y_{\cdot - 1} - Y_0 + X_1$ on $[1,2]$ (i.e. take $Y$, move it back to $0$ and then move it to the end of $X$). 
\begin{lemma}[Chen's identity]\label{lem:Chen's identity}
	If $X, Y\colon [0,1]\to\R^d$ are piecewise smooth paths, then $\sigma(X\sqcup Y)=\sigma(X)\otimes\sigma(Y)$ as formal power series in $T((\R^d))$.
\end{lemma}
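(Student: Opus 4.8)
The plan is to compare the two sides of the claimed identity degree by degree: since the product on $T((\R^d))$ is concatenation of tensors, the order-$k$ graded piece of $\sigma(X)\otimes\sigma(Y)$ is $\sum_{j=0}^{k}\sigma^{(j)}(X)\otimes\sigma^{(k-j)}(Y)$, so it suffices to prove that for every $k$ and every multi-index $(i_1,\dots,i_k)$ the corresponding entry of $\sigma^{(k)}(X\sqcup Y)$ equals the $(i_1\dots i_k)$-entry of that sum. By definition this entry is the integral of $\dot Z_{i_1}(t_1)\cdots\dot Z_{i_k}(t_k)$ over the simplex $\{0\le t_1\le\dots\le t_k\le 1\}$, where $Z=X\sqcup Y$.

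First I would record the standard fact that iterated integrals, hence the signature, are invariant under monotone piecewise smooth reparametrization: substituting $t_r\mapsto\psi(t_r)$ in each integration variable leaves every entry unchanged. In particular I may replace $Z$ by its reparametrization on $[0,2]$ given by $Z(t)=X(t)$ for $t\in[0,1]$ and $Z(t)=X(1)+Y(t-1)$ for $t\in[1,2]$; note that $\dot Z$ equals $\dot X$ on $(0,1)$ and $\dot Y(\,\cdot\,-1)$ on $(1,2)$, since the additive constant $X(1)$ does not affect the derivative, and $t=1$ is a single point.

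Next, decompose the simplex $\{0\le t_1\le\dots\le t_k\le 2\}$ according to how many coordinates lie in the first half: up to a measure-zero set it is the disjoint union over $j=0,\dots,k$ of $\{0\le t_1\le\dots\le t_j\le 1\}\times\{1\le t_{j+1}\le\dots\le t_k\le 2\}$. On the $j$-th piece the integrand factors, and using $\dot Z=\dot X$ on the first block and $\dot Z=\dot Y(\,\cdot\,-1)$ on the second, the integral over that piece equals
\[\Bigl(\int_{0\le t_1\le\dots\le t_j\le 1}\dot X_{i_1}(t_1)\cdots\dot X_{i_j}(t_j)\,dt\Bigr)\cdot\Bigl(\int_{0\le s_1\le\dots\le s_{k-j}\le 1}\dot Y_{i_{j+1}}(s_1)\cdots\dot Y_{i_k}(s_{k-j})\,ds\Bigr)\]
after the substitution $s_r=t_{j+r}-1$ in the second factor. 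These two numbers are exactly the $(i_1\dots i_j)$-entry of $\sigma^{(j)}(X)$ and the $(i_{j+1}\dots i_k)$-entry of $\sigma^{(k-j)}(Y)$. Summing over $j$ gives the $(i_1\dots i_k)$-entry of $\sum_{j=0}^{k}\sigma^{(j)}(X)\otimes\sigma^{(k-j)}(Y)$, which establishes the identity in degree $k$, hence in $T((\R^d))$. (Alternatively one could differentiate $s\mapsto\sigma^{\le m}(Z|_{[0,s]})$, observe it satisfies Chen's linear ODE, and invoke uniqueness of solutions; but the simplex decomposition is more elementary.)

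I do not expect a deep obstacle here — the argument is essentially bookkeeping. The two points needing a little care are the reduction to the $[0,2]$-parametrization, where one must genuinely check via change of variables that the iterated integrals are unchanged, and the verification that the "mixed" regions of the simplex in which some coordinate equals the splitting time $1$ have measure zero, so that the decomposition of the integral is legitimate; both are routine because piecewise smoothness guarantees the integrands are bounded and integrable.
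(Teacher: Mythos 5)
Your argument is correct and is essentially the standard proof of Chen's identity: the paper itself does not reprove the lemma but cites Chen's original work \cite[Section 2]{C57}, where precisely this simplex-decomposition argument appears. The two steps you flag as needing care — reparametrization invariance of the iterated integrals, and the measure-zero boundary of the simplex pieces — are handled exactly as you describe, and the factorization into $\sigma^{(j)}(X)\otimes\sigma^{(k-j)}(Y)$ followed by summing over $j$ recovers the $k$-th graded piece of the tensor product. No gap.
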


We now introduce a useful notation.

\begin{notation}
	For $T\in T((\R^d))$, denote by $T_{i_1\dots i_k}$ the $(i_1\dots i_k)$-th entry of the order $k$ element of $T$. For $y\in\R$, we set $T_y((\R^d)):=\{T\in T((\R^d))\mid T_0=y\}$ to be the space of tensor sequences with constant entry 0. Moreover, we will identify a degree $k$ monomial $x_{i_1}\cdot\ldots\cdot x_{i_k}$ with the word $w=i_1\ldots i_k$ in the alphabet $\{1,\dots,d\}$. The number $k$ is called the \emph{length} of $w$ and it is denoted by $\ell(w)$. The degree 0 monomial corresponds to the empty word $e$. %We will write letters in bold in order to distinguish the number 1 from the letter ${\bf 1}=x_1$. 
	In this way, the tensor product of the two monomials corresponding to the words $v$ and $w$ is the word obtained by writing $v$ followed by $w$, and it is called the \emph{concatenation product}. The natural duality pairing
	\[\langle-,-\rangle\colon T((\R^d))\times T(\R^d)\to\R
	\]
	is given by $\langle T, i_1\ldots i_k\rangle=T_{i_1\dots i_k}$, and extended by linearity.
\end{notation}

Besides the concatenation of words, there is another product on $T(\R^d)$. It has a strong combinatoric flavour, but it will also allow us to define very interesting algebraic objects.

\begin{definition}
	%The \emph{shuffle product} of two words $v$ and $w$, denoted by $v\shuffle w$, is the sum  of all order-preserving interleavings of them. A more precise, recursive definition can be found in \cite[Section 1.4]{Reutenauer}. Let $w$, $w_1$ and $w_2$ be three words and $a$ and $b$ two letters. Also denote by $e$ the empty word. 
	We define the \emph{shuffle product} of two words recursively by
\begin{eqnarray*}
\begin{array}{l}
e \shuffle w = w \shuffle e = w, \text{ and }\\
(w_1\otimes a) \shuffle (w_2\otimes b) = \left(w_1\shuffle (w_2\otimes b)\right)\otimes a + \left((w_1\otimes a )\shuffle w_2\right)\otimes b.
\end{array}
\end{eqnarray*}
Less formally, $v\shuffle w$ is the sum  of all order-preserving interleavings of $v$ and $w$.
\end{definition}

For instance, $1\shuffle 23=123+213+231$. Despite its apparently complicated definition, the shuffle product enjoys good properties. For instance, $(T(\R^d),\shuffle, e)$ is a commutative algebra. Moreover next Lemma, proven in \cite[Proof of Corollary 3.5]{Reutenauer}, shows that the shuffle product behaves nicely with respect to the signatures.

\begin{lemma}[Shuffle identity]\label{lem:shuffle identity}
	If $X%:[0,1]\to\R^d
	$ is a piecewise smooth path, then
	\[\langle \sigma(X),v\rangle\cdot\langle \sigma(X),w\rangle=\langle \sigma(X),v\shuffle w\rangle
	\]
	for all words $v,w\in T(\R^d)$.
\end{lemma}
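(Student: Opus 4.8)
The plan is to prove the identity by a double induction that turns the shuffle recursion into a statement about iterated integrals. By bilinearity of both sides in $v$ and $w$ it suffices to treat the case where $v$ and $w$ are single words, and I would induct on $n=\ell(v)+\ell(w)$. The right object to induct on is not the signature itself but the \emph{time-truncated} signature: for a word $u=u_1\dots u_\ell$ set
\[\sigma_u(t):=\int_{0<t_1<\dots<t_\ell<t}\dot X_{u_1}(t_1)\cdots\dot X_{u_\ell}(t_\ell)\,dt_1\cdots dt_\ell,\qquad \sigma_e(t):\equiv 1,\]
extended linearly in $u$ to all of $T(\R^d)$, so that $\langle\sigma(X),u\rangle=\sigma_u(1)$. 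The statement I would actually prove by induction is the stronger one: $\sigma_v(t)\,\sigma_w(t)=\sigma_{v\shuffle w}(t)$ for every $t\in[0,1]$ and all words $v,w$; evaluating at $t=1$ then yields the Lemma.

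For the base case, if $v=e$ or $w=e$ then $e\shuffle w=w$, and since $\sigma_e\equiv 1$ both sides coincide. For the inductive step write $v=v'\otimes a$ and $w=w'\otimes b$, and observe from the definition that $\sigma_{u'\otimes c}(t)=\int_0^t\sigma_{u'}(s)\,\dot X_c(s)\,ds$, so that $\tfrac{d}{dt}\sigma_{u'\otimes c}(t)=\sigma_{u'}(t)\,\dot X_c(t)$ on each interval where $X$ is $C^1$, and each $\sigma_u$ is continuous on $[0,1]$ and piecewise $C^1$. Applying the product rule to $\sigma_v\sigma_w$ gives $\tfrac{d}{dt}\bigl(\sigma_v(t)\sigma_w(t)\bigr)=\sigma_{v'}(t)\,\dot X_a(t)\,\sigma_w(t)+\sigma_v(t)\,\sigma_{w'}(t)\,\dot X_b(t)$. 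The induction hypothesis (at total length $n-1$) rewrites $\sigma_{v'}(t)\sigma_w(t)=\sigma_{v'\shuffle w}(t)$ and $\sigma_v(t)\sigma_{w'}(t)=\sigma_{v\shuffle w'}(t)$. Integrating from $0$ to $t$, and using $\sigma_v(0)=\sigma_w(0)=0$ because $v,w\neq e$, one obtains $\sigma_v(t)\sigma_w(t)=\int_0^t\sigma_{v'\shuffle w}(s)\dot X_a(s)\,ds+\int_0^t\sigma_{v\shuffle w'}(s)\dot X_b(s)\,ds$. Finally, linearity of $\sigma_{-}(t)$ and the recursive definition of the shuffle product identify the right-hand side with $\sigma_{(v'\shuffle w)\otimes a+(v\shuffle w')\otimes b}(t)=\sigma_{v\shuffle w}(t)$, closing the induction.

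The main obstacle is not the algebra — which is just the shuffle recursion read through the integral sign — but the analytic bookkeeping forced by the "piecewise smooth" hypothesis: one must justify the product rule and the fundamental theorem of calculus across the finitely many breakpoints of $X$. This is handled by passing to a common refinement of the partitions of $v$- and $w$-relevant pieces and noting that a continuous, piecewise-$C^1$ function is the integral of its derivative; one must also be disciplined about invoking the induction hypothesis in its "for all $t$" form rather than only at $t=1$. Alternatively, the whole analytic argument can be bypassed, either by citing \cite[Proof of Corollary 3.5]{Reutenauer}, or by the geometric observation that $\langle\sigma(X),v\rangle\langle\sigma(X),w\rangle$ is an integral over the product of two simplices, which decomposes up to a measure-zero set into the simplices indexed by the order-preserving interleavings of the two time-orderings, each contributing exactly one term of $\langle\sigma(X),v\shuffle w\rangle$.
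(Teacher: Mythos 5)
Your proof is correct and is essentially the standard argument (due to Ree, and reproduced in Reutenauer's Corollary~3.5, which is precisely what the paper cites rather than proving the lemma itself). The double induction on $\ell(v)+\ell(w)$, reading the shuffle recursion through the fundamental theorem of calculus applied to the time-truncated signature $\sigma_u(t)$, is exactly the mechanism of that classical proof; you have also correctly flagged the one genuine analytic point (justifying the product rule and FTC across the finitely many breakpoints of a piecewise-smooth path, which is fine because $\sigma_u$ is continuous and piecewise $C^1$). Since the paper defers to the literature here, there is nothing internal to compare against, but your argument fills the gap faithfully; the alternative you mention — decomposing the product of two simplices into the interleaving simplices up to a null set — is an equally standard route and gives the same result without the induction.
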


It follows that signatures do not fill the tensor space $T((\R^d))$, but rather they live in a subset. In order to make this observation precise, we introduce an important subspace of the tensor algebra.

\begin{definition}
	Consider the Lie bracketing $[T,S]=TS-ST$ on $T((\R^d))$. We define $\Lie(\R^d)\subset T_0((\R^d))$ to be the free Lie algebra generated by $x_1,\dots,x_d$, that is, the smallest vector subspace of $T((\R^d))$ that contains $x_1,\dots,x_d$ and is closed with respect to the bracketing.
\end{definition}
%The Lie algebra $\Lie(\R^d)$ is a linear subspace of $T((\R^d))$. 
%What we are really interested in 
The Lie group associated to $\Lie(\R^d)$ is an important object. One way to define it is as the image of $\Lie(\R^d)$ under the exponential map.
\begin{definition} Define $\exp:T_0((\R^d))\to T_1((\R^d))$ by the formal power series
	\[\exp(T):=\sum_{n=0}^{\infty}\frac{T^{\otimes n}}{n!}.\]
	We denote $\G(\R^d):=\exp(\Lie(\R^d))$.
\end{definition}

By construction, $(\G(\R^d),\otimes, e)$ is a group. However, there is another way to characterize it in terms of shuffle product. By \cite[Theorem 3.2]{Reutenauer},
	\[\G(\R^d)=\{T\in T_1((\R^d))\mid \langle T,v\rangle\cdot\langle T,w\rangle=\langle T,v\shuffle w\rangle
	\mbox{ for all words } v,w\in T(\R^d)\}.\]
Now we see the first clear connection with the signatures. By the shuffle identity, $\G(\R^d)$ contains the signatures of all piecewise smooth paths.

For our purposes, we need to point out that every definition we recalled has a truncated version. Namely, one can fix $m\in\N$ and consider
\[T^m(\R^d):=\bigoplus_{k=0}^m(\R^d)^{\otimes k},\]
where tensors of order greater than $m$ are set to zero. Inside $T^m(\R^d)$ there are $\G^m(\R^d)$ and $\Lie^m(\R^d)$.
The restriction of the map $\exp$ is defined in the same way, and we will write $\exp^{(m)}$ to denote the map $T_0^m((\R^d))\to T_1^m((\R^d))$.

Another feature of the group $\G(\R^d)$ is that, for every $m\in\N$, $\G^m(\R^d)$ not only contains, but actually coincides with the set of all truncated signatures of smooth paths (see \cite[Theorem 4.4]{AFS18}) and it is a Lie group (see \cite[Theorem 7.30]{FrizVictoir2010}).
Moreover, it is defined by finitely many polynomials - namely, the shuffle relations with words of length at most $m$ - hence it is an algebraic variety. It is irreducible by \cite[Theorem 4.10]{AFS18}. On the other hand, $\Lie^m(\R^d)$ is a finite dimensional vector space. In order to provide it with a basis, it is time to introduce another powerful combinatoric concept.

\begin{definition}
	A non-empty word $w$ is \emph{Lyndon} if, whenever we write $w=pq$ as the concatenation of two nonempty words, we have $w<q$ in the lexicographic order. %We denote by $W_{d,m}$ the set of Lyndon words of length at most $m$ in the alphabet $\{{\bf 1},\dots,{\bf d}\}$. 
	In this case there is a unique pair $(p,q)$ of nonempty words such that $w=pq$ and $q$ is minimal with respect to lexicographic order. The \emph{bracketing} of $w$ is $[p,q]=pq-qp$.
\end{definition}

As shown in \cite[Corolllary 4.14]{Reutenauer} and \cite[Proposition 4.7]{AFS18}, the bracketings of all Lyndon words of length at most $m$ form a basis for  $\Lie^m(\R^d)$. Therefore, in order to compute its dimension - and thus the dimension of the variety $\G^m(\R^d)$ - it is enough to count Lyndon words. First, recall that the M\"{o}bius function $\mu:\N\to\N$ sends a natural number $t$ to
\[\mu(t)=\begin{cases}
0 & \mbox{ if $t$ is divisible by the square of a prime},\\
1 & \mbox{ if $t$ is the product of an even number of distinct primes},\\
-1 & \mbox{ if $t$ is the product of an odd number of distinct primes}.\\
\end{cases}
\]
Then the number of Lyndon words of length $l$ in the alphabet $\{ 1,\dots,d\}$ %, denoted by $\mu_{l,d}$,
 is 
%\[\mu_{l,d}=
$\sum_{t\mid l}\frac{\mu(t)}{l}d^{\frac{l}{t}},$
and therefore %, as a vector space, $\Lie^m(\R^d)$ has dimension
\[\dim\Lie^m(\R^d)=\sum_{l=1}^m \sum_{t\mid l}\frac{\mu(t)}{l}d^{\frac{l}{t}}.\]

\subsection{Signatures from a geometric viewpoint}\label{subsec:geometric side}
Chen's theorem assumes the knowledge of the $k$-th signature for every $k\in\N$. What happens when we know only one of them?
In \cite{AFS18}, the authors consider the problem from an algebraic geometry perspective. If we fix a certain class of paths and the order $k$ of the tensors, then the $k$-th signature $\sigma^{(k)}$ is an algebraic map into $(\R^d)^{\otimes k}$. The closure of the image of  $\sigma^{(k)}$ is called the \emph{$k$-signature variety}. %is the closure of the set of all $k$-th signatures of paths of the chosen class. 
We get a first example by considering the class of all smooth paths in $\R^d$. If $X$ is smooth, then the $k$-th signature of $X$ is just the $k$-th entry of $\sigma(X)\in\G(\R^d)$. This leads to the following definition.
\begin{definition}
	The \emph{universal variety} $\U_{d,k}\subset(\R^d)^{\otimes k}$ is the projection of $\G(\R^d)$ onto the $k$-th factor. It is the closure of the set of all $k$-th signatures of smooth paths.
\end{definition}
Since every $\G^m(\R^d)$ is an irreducible variety, $\U_{d,k}$ is irreducible as well. By \cite[Theorem 6.1]{AFS18}, its dimension coincides with $\dim\G^k(\R^d)$. Equivalently, we can compute it as the dimension of the vector space $\Lie^k(\R^d)$, that is the number of Lyndon words of length at most $k$ in the alphabet $\{1,\dots,d\}$.
%Further examples of signature varieties are $\mathcal{P}_{d,k,m}$, which parametrizes $k$-th signatures of polynomial paths of degree at most $m$, and $\LL_{d,k,m}$, which parametrizes $k$-th signatures of piecewise linear paths with $m$ steps. They are both subvarieties of $\U_{d,k}$. It is worth to point out that, since every smooth path can be approximated by a sufficiently high degree polynomial path, or by a piecewise linear path with many steps, $\U_{d,k}=\mathcal{P}_{d,k,m}=\LL_{d,k,m}$ for large enough $m$.

This geometric approach provides a way to translate questions about paths into questions about the signature variety. For instance, the crucial problem of reconstructing uniquely the path from its $k$-signature translates into the injectivity of $\sigma^{(k)}$. This leads to study the fibers of $\sigma^{(k)}$ and therefore the dimension of the signature variety. Provided that a preimage can be reconstructed, such computation consists in solving a system of polynomial equations, and we can get an idea of how difficult that is by looking at the degree of the variety or at the degrees of the generators of its ideal. Taking a step further, the singular locus can point out that some signatures are different from the others, so we have a meaningful way to distinguish special paths in our class.

There are many interesting questions when it comes to study a signature variety, and we do not hope to give full answers to all of them. %However, some classes are better understood than others. For instance, $\mathcal{P}_{d,k,m}$ and $\LL_{d,k,m}$ are defined and studied in \cite{AFS18}. Moreover, in \cite{G18}, the author drops the smoothness hypothesis and investigates the class of rough paths. 
Path recovery from the third signature is the main goal of \cite{PSS18}. Further interplay between the tensor algebra and the signatures is explored in \cite{ColmenarejoPreiss}. Our contribution to this topic is a detailed study of the rough paths signature variety, presented in Section \ref{sec:rough veronese}, as well as the analysis of the axis parallel signature variety, which we describe in Section \ref{sec:axis parallel}. Both varieties present several geometric subtleties. When dealing with them, we could take advantage of a surprisingly rich combinatorial structure, that allows us to integrate a projective geometric approach with more computational techniques.

\section{The rough Veronese variety}\label{sec:rough veronese}

Researchers in stochastic analysis usually work with paths that are not piecewise smooth, and therefore do not have a signature in the sense of Chen's definition. One of the most important examples is the class of rough paths. The interest in rough paths is quickly growing. Applications include the study of controlled ODEs and stochastic PDEs (see \cite{LyonsCaruanaLevy2004}) as well as sound compression (see \cite{LyonsSidorova2005}), not to mention mathematical physics (see \cite{FrizGassiatLyons2015}). The main references on rough paths are \cite{FrizVictoir2010} and \cite{FrizHairer2014}. In this section we recall what the signature of a rough path is, and study the corresponding signature variety. %with an algebraic geometric approach.

The signature variety of all rough paths is the whole universal variety. In the same way as polynomial paths are an interesting subclass of smooth paths, in \cite[Section 5.4]{AFS18} the authors consider a nice subclass of rough paths, parametrized by $\Lie^m(\R^d)$. Their signature variety $\RR_{d,k,m}$ exhibits similarities with the classical Veronese variety and it was therefore named the \emph{rough Veronese variety} - see Definition \ref{def:rough veronese}. Another reason to study $\RR_{d,k,m}$ is that we can deduce many properties of the universal varieties from it. Indeed, $\U_{d,k}=\RR_{d,k,m}$ for every $m\ge k$ and moreover, $\U_{d,k}=\RR_{d,k,k}$ is a cone over $\RR_{d,k,k-1}$ (see \cite[Proposition 26]{G18}).
%Our main results in this section are the following.

\subsection{Preliminaries}

One purpose of the definition of a rough path is to generalize the concept of smooth path. We consider then a smooth path $X$, and without loss of generality we assume $X(0)=0$. Fix $t\in [0,1]$. In the definition of $k$-th signature we can replace the integral on $[0,1]$ with an integral on $[0,t]$. This is the same as restricting $X$ to $[0,t]$, hence we will denote such integral by $\sigma^{(k)}(X_{|[0,t]})$. %As an example,
% \[\sigma^{(1)}(X_{|[0,t]})_i=\int_{0}^{t}\dot{X_i}(\lambda)d\lambda=X_i(t) %-X_i(0)
% \]
For every $k$, we notice that $\sigma^{(k)}(X_{|[0,t]})$, as a function of $t$, is a path $[0,1]\to(\R^d)^{\otimes k}$. If we look at the full signature $\sigma(X_{|[0,t]})$, we get a path $[0,1]\to\G(\R^d)$. Notice that the signature of $X$ is the endpoint of such path. By \cite[Lemma 10]{G18}, this $\G(\R^d)$-valued path satisfies the H\"{o}lder-like inequality
\begin{equation}\label{eq:holder}
\left| \sigma^{(k)}(X_{|[s,t]})\right| \apprle |t-s|^k,
\end{equation}
where $f(x)\apprle g(x)$ means that there is a constant $c$ such that $f(x)\le c\cdot g(x)$ for every $x$.
Summing up, a smooth path $X\colon [0,1]\to\R^d$ induces a path $\sigma(X_{|[0,\cdot]})\colon [0,1]\to\G(\R^d)$ satisfying inequality (\ref{eq:holder}). If we want a rough path to be a generalization of a smooth path, we can use this property, also allowing different exponents. Let $p_k:T((\R^d))\to(\R^d)^{\otimes k}$ be the projection.

\begin{definition}
	A \emph{rough path} of order $m$ is a path ${\bf X}\colon [0,1]\to\G^m(\R^d)$ such that $|p_k({\bf X}(s)^{-1}\otimes {\bf X}(t))| \apprle |t-s|^\frac{k}{m}$
	for every $k\in\{1,\dots,m\}$ and every $s,t\in [0,1]$. The inverse is taken in the group $\G^m(\R^d)$.
\end{definition}

As we anticipated, we will focus on a special subclass of rough paths of order $m$, indexed by elements $L\in\Lie^m(\R^d)$.

\begin{definition}
	For $L\in\Lie^m(\R^d)$, consider the path ${\bf X}_L\colon [0,1]\to\G^m(\R^d)$ sending $t$ to $\exp^{(m)}(tL)$. %Define $${\bf X}_L=(p_1(g_L),p_2(g_L)).$$
	By \cite[Exercise 9.17]{FrizVictoir2010}, this is indeed an order $m$ rough path, and we define its \emph{signature} to be its endpoint $\sigma({\bf X}_L):=\exp(L)\in\G(\R^d)$.
\end{definition}

We want to parametrize the variety containing all $\sigma^{(k)}({\bf X}_L)$ when $L$ ranges in $\Lie^m(\R^d)$, so we are interested in the image of $p_k\circ\exp:\Lie^m(\R^d)\to (\R^d)^{\otimes k}$. While $\exp$ is not an algebraic map, $p_k\circ\exp$ is. In general, the image of an algebraic map is only a semialgebraic subset of the real affine space $(\R^d)^{\otimes k}$. However, it is simpler to work with complex projective varieties, so we will follow a common approach in applied algebraic geometry and consider the Zariski closure of the image, take the complexification and pass to the projectivization.

\begin{definition}\label{def:rough veronese}
	The \emph{rough Veronese variety} $\RR_{d,k,m}$ is the closure of the image of the composition
	\[f_{d,k,m}\colon \Lie^m(\R^d)\xrightarrow{\exp}\G(\R^d)\xrightarrow{p_k}(\R^{d})^{\otimes k} \rightarrow (\R^{d})^{\otimes k}\otimes\C=(\C^{d})^{\otimes k}\dashrightarrow\p^{d^k-1}
	.\]
\end{definition}
In general it is not easy to work out all the invariants of a given variety. Luckily, we will shortly see that the rough Veronese variety is toric. In other words it is the closure of the image of a monomial map. There are a lot of tools and techniques that make toric varieties accessible and easy to work with. Classical references on toric varieties are \cite{CLS11, BerndBook}. First, we fix some notation.
\begin{notation}\label{not:SR}
For every $d\ge 2$, let $W_{d,m}$ be the set of Lyndon words of length at most $m$ in the alphabet with $d$ letters. Let $S_{d,m}:=\C[x_w\mid w\in W_{d,m}]$ be a polynomial ring with as many variables as Lyndon words. On this algebra we define a grading by setting the weight of $x_w$ to be the length of $w$.

%Let $S=S_{m,d}=\C[y_\alpha]$ be a polynomial ring in variables $y_\alpha$ corresponding to Lyndon words $\alpha$ of length at most $m$ in alphabet with $d$ letters. The \emph{degree} of a variable corresponding to a Lyndon word $\alpha$ is the length of $\alpha$.

%Let $R=R_{d,k}=\C[x_1,\dots,x_{d^k}]$ be a polynomial ring with standard grading.

Let $A$ be the set of all monomials in $\C[x_w\mid w\in W_{d,m}]$ of weighted degree $k$. Define another polynomial ring $R_{d,k}:=\C[y_\alpha\mid \alpha\in A]$ with as many variables as elements of $A$, and the usual polynomial grading. 
\end{notation}
Our starting point will be the following result, proven in \cite[Proposition 19]{G18}. Not only we know that $\RR_{d,k,m}$ is toric, but we also know what are the monomials parametrizing it.
\begin{theorem}\label{thm:toricR}
Up to projectivization, $\RR_{d,k,m}$ is isomorphic to the closure of the image of the map $\Spec S_{d,m}\rightarrow\Spec R_{d,k}$ given by all monomials of weighted degree $k$. That is, the kernel of the map $R_{d,k}\rightarrow S_{d,m}$ is the homogeneous prime ideal defining $\RR_{d,k,m}$.
\end{theorem}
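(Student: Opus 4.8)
The plan is to make the composition $f_{d,k,m}$ explicit in the Lyndon coordinates, to observe that it factors through a weighted Veronese map, and then to identify the linear part of that factorization using the Poincar\'e--Birkhoff--Witt theorem.

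\emph{Step 1: the weighted Veronese factorization.} Since the bracketings $P_w$ of the Lyndon words $w\in W_{d,m}$ form a basis of $\Lie^m(\R^d)$, write a general element as $L=\sum_{w\in W_{d,m}}c_w P_w$ and expand $\exp(L)=\sum_{n\ge 0}\frac{1}{n!}L^{\otimes n}$. As $P_w$ is homogeneous of tensor degree $\ell(w)$, every monomial $c_{w_1}\cdots c_{w_n}$ occurring in this expansion is multiplied by a tensor of order $\ell(w_1)+\cdots+\ell(w_n)$. Hence, giving $c_w$ the weight $\ell(w)$, each coordinate of $p_k(\exp(L))$ is a polynomial in the $c_w$ that is homogeneous of weighted degree $k$, i.e.\ a $\C$-linear combination of the monomials indexed by $A$. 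Therefore $f_{d,k,m}$ factors as $\Lie^m(\R^d)\xrightarrow{\mathrm{Ver}}\p^{\#A-1}\xrightarrow{\p(\Lambda)}\p^{d^k-1}$, where $\mathrm{Ver}$ sends $L$ to the vector of all weighted-degree-$k$ monomials in the $c_w$ (the projectivization of the monomial map $\Spec S_{d,m}\to\Spec R_{d,k}$ from the statement), and $\Lambda\colon\C^{A}\to(\C^d)^{\otimes k}$ is the linear map taking $e_\alpha$ to the tensor which is the coefficient of the monomial $\alpha$ in $p_k(\exp(L))$.

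\emph{Step 2: $\Lambda$ is injective.} Here I would invoke the symmetrization map $\mathrm{sym}$, i.e.\ the graded linear isomorphism from the symmetric algebra on $\Lie(\R^d)$ onto its universal enveloping algebra $T(\R^d)$ that averages a product over all its orderings. Computing the exponential inside the commutative symmetric algebra gives $\prod_{w}\sum_{j\ge 0}\frac{c_w^{\,j}}{j!}P_w^{\,j}$, and applying $\mathrm{sym}$ yields $\exp(L)=\sum_{M}\bigl(\prod_{w}\frac{c_w^{\mu_w(M)}}{\mu_w(M)!}\bigr)\,P_M$, the sum ranging over multisets $M$ of Lyndon words of length $\le m$, where $\mu_w(M)$ is the multiplicity of $w$ in $M$ and $P_M:=\mathrm{sym}\bigl(\prod_w P_w^{\mu_w(M)}\bigr)$. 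Because $\mathrm{sym}$ is graded, $p_k$ retains exactly the $M$ of total length $k$, so the coefficient tensor of $\alpha$ equals $P_M$ up to the nonzero scalar $\prod_w\mu_w(M)!$. Now $\{P_M : M\text{ a multiset of Lyndon words of total length }k\}$ is the image under $\mathrm{sym}$ of the standard monomial basis of the degree-$k$ piece of the symmetric algebra, hence a basis of $(\C^d)^{\otimes k}$ (its index set being in bijection with words of length $k$ by Chen--Fox--Lyndon). The tensors attached to the $\alpha\in A$ are the subfamily in which every Lyndon factor has length $\le m$, so they are linearly independent, and $\Lambda$ is injective.

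\emph{Step 3: conclusion.} Being an injective linear map, $\p(\Lambda)$ is a closed embedding, so $\RR_{d,k,m}=\overline{\mathrm{im}\,f_{d,k,m}}=\p(\Lambda)\bigl(\overline{\mathrm{im}\,\mathrm{Ver}}\bigr)$ is isomorphic to $\overline{\mathrm{im}\,\mathrm{Ver}}$, the closure of the image of the monomial map $\Spec S_{d,m}\to\Spec R_{d,k}$. That closure is cut out by $J:=\ker(R_{d,k}\to S_{d,m})$; since $S_{d,m}$ is a domain and the map is graded, $J$ is a homogeneous prime ideal, whence $R_{d,k}/J$ is the homogeneous coordinate ring of $\RR_{d,k,m}$ in this embedding and, being generated by monomials, realizes $\RR_{d,k,m}$ as a projective toric variety. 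The only real content is Step 2: one has to recognize the PBW/symmetrization identity hidden in $p_k\circ\exp$ and match its index set with Lyndon multisets. An alternative, more hands-on route argues by triangularity, ordering words by the lexicographic order of their Chen--Fox--Lyndon factorization and using that $\langle P_w,v\rangle\ne 0$ forces $v\ge w$ with $\langle P_w,w\rangle=1$; there the delicate point --- automatic in the PBW picture --- is to rule out cancellations among the diagonal coefficients.
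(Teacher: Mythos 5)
The paper does not reproduce a proof of this theorem; it cites it directly from \cite[Proposition~18]{G18}, so there is no in-paper argument to compare your attempt against. On its own merits, your argument is correct. Step~1 is immediate from the tensor-homogeneity of the Lyndon bracketings $P_w$. Step~2 is the crux, and you handle it properly: the identity $\mathrm{sym}(L^n)=L^n$ for $L\in\Lie(\R^d)$ lets you transport the commutative computation of the exponential through the graded Poincar\'e--Birkhoff--Witt isomorphism, so the coefficient of a weighted-degree-$k$ monomial $\alpha$ in $p_k\circ\exp$ is a nonzero scalar times $P_M$, and the family $\{P_M\}$ ranging over all multisets of Lyndon words of total length $k$ is a PBW basis of $(\C^d)^{\otimes k}$; the $\alpha\in A$ pick out the subfamily with all factors of length at most $m$, hence a linearly independent set, hence $\Lambda$ is injective and $\p(\Lambda)$ is a closed embedding. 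Step~3 is then routine, with primeness and homogeneity of $\ker(R_{d,k}\to S_{d,m})$ coming from $S_{d,m}$ being a graded domain. This is the natural PBW/symmetrization route, and I would expect it to be essentially the argument of \cite{G18}; your sketched triangularity alternative is also viable but, as you note yourself, the cancellation issue on the diagonal requires care that the PBW picture sidesteps.
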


More precisely, it is the image of a weighted projective space by the map given by all sections of $\mathcal{O}(k)$, i.e.~all monomials of (weighted) degree $k$. Such a map does not have to be an embedding. In fact, $\mathcal{O}(k)$ does not need to be a Cartier divisor. 

\begin{remark}
	The classical $k$-Veronese variety is the image of the map given by all degree $k$ monomials in the usual grading. Theorem \ref{thm:toricR} shows that $\RR_{d,k,m}$ can be seen as a weighted version of the Veronese variety, thereby justifying its name.
\end{remark}
Theorem \ref{thm:toricR} allows us to investigate $\RR_{d,k,m}$ with the tools of toric geometry. In some of our combinatoric arguments, we will find it convenient to use the following notation.
\begin{notation}
	For any integers $a,b$ we denote by $(a^b)$ a sequence or multiset of $b$ elements equal to $a$. For example $(1^3,2^2)=(1,1,1,2,2)$.
\end{notation}

We start by describing the ideal of this variety, i.e.~the polynomials that vanish on it.

\subsection{Polynomials defining the rough Veronese variety}

The classical Veronese variety is defined by quadrics, so it is natural to ask whether $\RR_{d,k,m}$ enjoys the same property. Despite holding in many examples (see \cite[Section 4.3]{AFS18} and the computations in \cite{G18}), the conjecture that $\RR_{d,k,m}$ is always defined by quadrics is false. In \cite[Proposition 28]{G18} we see that the ideal $\RR_{2,20,14}$ has a cubic generator. One could still hope to bound the degree of the generators. 
Surprisingly no such bound exists, as we will shortly prove. First we need the following lemma from linear algebra.

\begin{lemma}\label{lem: magic square}
	For every $n\ge 3$, there exist $k_n\in\N$ and an $n\times n$ matrix $M_n$ with positive integer entries satisfying the following properties:
	\begin{enumerate}
		\item the sum of the entries of each row and each column of $M_n$ equals $k_n$,
		\item any subset of $n$ entries of $M_n$ that sum up to $k_n$ is either a row or a column; 
		%the only way to decompose the multiset of entries of $M_n$ into $n$-tuples summing to $k_n$ is by rows or by columns,
		\item All entries of $M_n$ are distinct; in particular, each column, is distinct from each row.
	\end{enumerate}
	We refer to the matrix $M_n$ as a \emph{rigid $k_n$-square matrix}.
	\begin{proof}
		We proceed by induction on $n$. If $n=3$, we take $k_3=31$ and
		\[M_3=\left( \begin{matrix}
		1 & 10 & 20\\
		16 & 6 & 9\\
		14 & 15 & 2
		\end{matrix}\right) .\]
		%		One can easily verify that the multiset $\{1,2,2,3,4,5,5,6,8\}$ has the desired property.		
		Assume the statement holds for $n$ and let us prove it for $n+1$. By induction hypothesis, there exist $k_n\in\N$ and an $n\times n$ matrix $M_n$ with the required properties. Let $\lambda\gg x\gg n$ be natural numbers and define $k_{n+1}=\lambda k_n+x$. Moreover, define the $(n+1)\times (n+1)$ matrices
		\[A=\left(
		\begin{tabular}{cccc|c}
		&&& & 0\\
		&$\lambda M_n$ && & 0\\
		&& && $\vdots$\\
		&&& & 0\\\hline\bigstrut
		0& $\dots$& 0& 0 & $\lambda k_n$
		\end{tabular}\right) \mbox{ and }\]\[
		B=\left(
		\begin{tabular}{cccccc|c}
		0&\dots& 0& 0&-2n+2&2n-3& $1+x$\\
		0&\dots&  0&-2n+3&0&2n-5& $2+x$\\
		$\vdots$&$\udots$&$\udots$&$\udots$&$\vdots$ & $\vdots$& $\vdots$\\
		0&-n-1 & 0& $\dots$&0&3& $n-2+x$\\
		-n&0&$\dots$ &$\dots$ &0&1& $n-1+x$\\
		0&0& $\dots$&$\dots$& 0&$-n^2$& $n^2+x$\\\hline\bigstrut
		$n+x$& $n+1+x$& $\dots$&$\dots$&$2n-2+x$& $2n-1+x$ & $nx-\frac{n(3n-1)}{2}$
		\end{tabular}\right).\]
		%		\begin{align}
		%		A_{n+1}&=\left(		\begin{tabular}{ccccc|c}
		%		&&& & & 0\\&&& & & 0\\
		%		&& $\lambda M_n$ && & $\vdots$\\
		%		&&& & & 0\\&&& & & 0\\\hline\bigstrut
		%		0&0& $\dots$& 0& 0 & $\lambda k_n$
		%		\end{tabular}\right) \mbox{ and}\nonumber\\
		%		B_{n+1}&=\left(
		%		\begin{tabular}{ccccc|c}
		%		0&$\dots$&$\dots$& 0& 0& n+1\\
		%		0&&\reflectbox{$\ddots$}& 0& 0& n+1\\
		%		$\vdots$&\reflectbox{$\ddots$}& \reflectbox{$\ddots$}& \reflectbox{$\ddots$} & $\vdots$& $\vdots$\\
		%		0&0&0& $\dots$& 0& n+1\\
		%		1&1&1& $\dots$& 1& 1\\\hline\bigstrut
		%		n&n& n& $\dots$& n & $-n^2+n+1$
		%		\end{tabular}\right).\nonumber		\end{align}
		Then we take $M_{n+1}:= A+B$. The only nontrivial property we have to show is point $2$. If a given subset $S$ of entries contains the lower right entry, then it may only contain the elements from the last row or column, as otherwise the sum of elements would exceed $k_{n+1}$. If it additionally contains $n^2+x$, then it must contain only the smallest elements left: that is those from the last column. If it does not contain $n^2+x$ then it must contain the largest elements left: those from the last row. 
		
		Hence, from now on we assume that $S$ does not contain the $(n+1,n+1)$ entry of $M_{n+1}$. As $\sum_{s\in S} s= x$ modulo $\lambda$ and $x\gg n$ we see that $S$ must contain precisely one element from last row or column.  As the sum of $n$ elements of the upper left $n\times n$ submatrix of $B$ is much larger then $-\lambda$ and much smaller than $\lambda$ we see that the corresponding elements of $A$ must sum to $\lambda k_n$. By inductive assumption $S$ must contain a row or a columns of $A$. Each such row or column can only be uniquely completed to a row or column of $M_{n+1}$, as all the entries in the last row and column are distinct. 
	\end{proof}
\end{lemma}
Thanks to Lemma \ref{lem: magic square}, we are now able to produce instances of $\RR_{d,k,m}$ generated in arbitrarily high degree.
\begin{proposition}\label{pro:rough veronese is not gen in bounded degree}
	For every $n\ge 3$ and every $d\geq 2$, there exist $k,m\in\N$ such that $k\ge m$ and the ideal of $\RR_{d,k,m}$ is not generated in degree $n$.
	\begin{proof}
		By Lemma \ref{lem: magic square}, there exists $k\in\N$ and a rigid $k$-square matrix $M$ of size $(n+1)\times(n+1)$. Let $m$ be the largest entry of $M$. %For every $d\ge 2$, let $W_{d,m}$ be the set of Lyndon words of length at most $m$ in the alphabet with $d$ letters. Let $\C[x_w\mid w\in W_{d,m}]$ be a polynomial algebra with as many variables as Lyndon words. On this algebra we define a grading by setting the weight of $x_w$ to be the length of $w$. 
		%		Now let $A$ be the set of all monomials in $\C[x_w\mid w\in W_{d,m}]$ of weighted degree $k$. Define another polynomial ring $\C[y_\alpha\mid \alpha\in A]$ with as many variables as elements of $A$, and the usual polynomial grading. 
		By Theorem \ref{thm:toricR}, the ideal $I$ of $\RR_{d,k,m}$ is the kernel of the ring homomorphism
		\[\f:\C[y_\alpha\mid \alpha\in A]\to\C[x_w\mid w\in W_{d,m}]\]
		sending $y_\alpha$ to the weighted monomial $\alpha$.
		
		For every $i\in\{1,\dots,m\}$, fix once and for all a variable of weight $i$. This means that we fix a length $i$ Lyndon word $w$ and we associate to $i$ the variable $x_w$. %We can take $d$ large enough to have as many Lyndon words of length $i$, and therefore as many weight $i$ variables, as we need.
%\todo[inline]{do we need that?}
		For every $j\in\{1,\dots,n+1\}$, consider the monomial $\alpha_j$ defined by the product of $n+1$ variables $x_w$ whose weights are the entries of the $j$-th row of $M$. By construction, this monomial has weighted degree $k$ and therefore $\alpha_1,\dots,\alpha_{n+1}\in A$. In a similar way, we can consider the monomials $\beta_1,\dots,\beta_{n+1}$ defined by the columns of $M$. We define
		\[f:=y_{\alpha_1}\cdot\ldots\cdot y_{\alpha_{n+1}}-y_{\beta_1}\cdot\ldots\cdot y_{\beta_{n+1}}.\]
		This is a degree $n+1$ element of $\C[y_\alpha\mid \alpha\in A]$. Moreover, since the degrees appearing in the rows of $M$ are the same as the degrees appearing in the columns, $f\in\ker(\f)=I$. Let us show that $f$ is not generated by degree $n$ elements. Let $q_1,\dots,q_r$ be the polynomials spanning the degree $n$ part of $I$. %that are of degree at most $n$. 
		Since $\RR_{d,k,m}$ is toric, we can assume these are binomials \cite[Proposition 1.1.9]{CLS11}. Hence, $q_i=g_i-h_i$, where each $g_i$ and each $h_i$ is a degree $n$ monomial. Suppose by contradiction that $f$ can be generated by $q_1,\dots,q_r$. Then there exist variables $l_1,\dots,l_r$ such that $f$ is the sum $$f=l_1(g_1-h_1)+\dots+l_r(g_r-h_r).$$ %It follows that each term in the sum is a multiple of some degree $n$ monomial. In particular, the monomial 
		Without loss of generality, assume that $y_{\alpha_1}\cdot\ldots\cdot y_{\alpha_{n+1}}=l_1g_1$. Since $l_1g_1-l_1h_1\in I$, we know that $\varphi(l_1h_1)=\varphi(l_1g_1)$. As a multiset, the variables in $\varphi(l_1h_1)$ are the entries of the matrix $M$. Furthermore, the variables of $l_1h_1$ provide a partition into $n+1$ sets of cardinality $n+1$, each with sum $k$. Since $h_1\neq g_1$, by assumption on $M$, this partition must correspond to the column partition of $M$. Hence, $l_1(h_1-g_1)=f$. This is a contradiction, because no variable divides $f$ by construction.
		%Since $q_1\in I$, the product of the monomials indexing the variables appearing in $g_1$ is the same as the one of $h_1$. Both $g_1$ and $h_1$ are product of $n$ variables, and each variable is indexed by a weighted monomial, that in turn is a way to write $k$ as a sum of its weights. So each of them gives $n$ ways to write $k$ as a sum of the weights of the variables appearing in $f$, and therefore it corresponds to $n$ rows or columns of $S$. Because of the rigidity of the square $S$, the only possibility is $g_1=h_1$, and that contradicts the minimality of $q_1,\dots,q_r$.
	\end{proof}
\end{proposition}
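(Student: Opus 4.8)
The plan is to combine the rigid square matrices produced by Lemma~\ref{lem: magic square} with the monomial description of $\RR_{d,k,m}$ from Theorem~\ref{thm:toricR}. First I would apply Lemma~\ref{lem: magic square} to the given $n\ge 3$, obtaining a natural number $k$ and a rigid $k$-square matrix $M$ of size $(n+1)\times(n+1)$. Setting $m$ to be the largest entry of $M$, the inequality $k\ge m$ is automatic: each row of $M$ sums to $k$ and consists of $n+1\ge 4$ positive integers, one of which can be taken equal to $m$. By Theorem~\ref{thm:toricR}, the ideal $I$ of $\RR_{d,k,m}$ is the kernel of the monomial map $\varphi\colon \C[y_\alpha\mid\alpha\in A]\to\C[x_w\mid w\in W_{d,m}]$ sending $y_\alpha$ to the weighted monomial $\alpha$, where $A$ is the set of weighted-degree-$k$ monomials in the $x_w$. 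Since $d\ge 2$, there is at least one Lyndon word of each length $i\in\{1,\dots,m\}$ (the count recalled in the introduction is positive for $d\ge 2$), so I can fix once and for all a variable $x_{w_i}$ of weight $i$ for every such $i$.

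Next I would build the witness polynomial. For each row index $j$, let $\alpha_j\in A$ be the product of the $n+1$ variables $x_{w_i}$ whose weights are the entries of the $j$-th row of $M$; by property~(1) of $M$ this has weighted degree $k$, hence $\alpha_j\in A$. Similarly let $\beta_j\in A$ be built from the $j$-th column. Since the rows and the columns of $M$ use the same multiset of entries, the monomials $y_{\alpha_1}\cdots y_{\alpha_{n+1}}$ and $y_{\beta_1}\cdots y_{\beta_{n+1}}$ have the same image under $\varphi$, so
\[
f:=y_{\alpha_1}\cdots y_{\alpha_{n+1}}-y_{\beta_1}\cdots y_{\beta_{n+1}}\in\ker(\varphi)=I
\]
is an element of $I$ of degree $n+1$. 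Property~(3) of $M$ says no column equals a row as a multiset, so the multisets of variables $\{\alpha_1,\dots,\alpha_{n+1}\}$ and $\{\beta_1,\dots,\beta_{n+1}\}$ are disjoint, and therefore $f$ has no nonconstant monomial factor.

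The core of the argument — and the step I expect to be the main obstacle — is to show that $f$ does not lie in the ideal generated by the degree-$n$ part $I_{\le n}$ of $I$. Here I would use that $\RR_{d,k,m}$ is toric, so by \cite[Proposition 1.1.9]{CLS11} the space $I_{\le n}$ is spanned by binomials $g_i-h_i$ with $g_i,h_i$ degree-$n$ monomials. Assuming $f=\sum_i\ell_i(g_i-h_i)$ with linear $\ell_i$, a standard cancellation argument shows that, after relabeling, $y_{\alpha_1}\cdots y_{\alpha_{n+1}}=\ell_1 g_1$; since $\ell_1 g_1-\ell_1 h_1\in I$ we get $\varphi(\ell_1 h_1)=\varphi(\ell_1 g_1)$. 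Reading off weights, the variables of $\ell_1 h_1$ give a partition of the multiset of entries of $M$ into $n+1$ blocks of size $n+1$, each of sum $k$; as $h_1\ne g_1$, property~(2) of $M$ forces this partition to be by rows or by columns, and property~(3) excludes the row partition, so $\ell_1 h_1=y_{\beta_1}\cdots y_{\beta_{n+1}}$. But then $\ell_1(g_1-h_1)=f$, so the variable $\ell_1$ divides $f$, contradicting the fact that $f$ has no nonconstant monomial factor. Hence $I$ is not generated in degree $n$, which proves the proposition. The genuinely hard input is the construction of the rigid square matrices, which is already supplied by Lemma~\ref{lem: magic square}; once that is in hand, the remaining steps are the toric translation and this combinatorial rigidity argument.
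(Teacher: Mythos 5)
Your proposal is correct and follows essentially the same strategy as the paper: take a rigid square matrix from Lemma~\ref{lem: magic square}, use Theorem~\ref{thm:toricR} to realize $I$ as a toric ideal, form the binomial $f$ from the row and column monomials, and show via the rigidity properties that any expression of $f$ in terms of degree-$n$ binomial generators would force a variable to divide $f$, contradicting the coprimality of its two monomials. Your added observation that $k\ge m$ is automatic (each row sums to $k$, has $n+1\ge 4$ positive entries, and one of them is $m$) is a small but welcome bit of diligence that the paper leaves implicit.
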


In applications, sometimes one has a signature coming from experiments and wants to know whether it belongs to a path of a certain class. In other words, we have a point in $\U_{d,k}$ and we want to understand if it belongs to a given signature subvariety. From this viewpoint Proposition \ref{pro:rough veronese is not gen in bounded degree} may seem disappointing. In order to check whether a given point belongs to $\RR_{d,k,m}$, we might have to evaluate polynomials of very high degree. The good news is that in most cases it is enough to only evaluate quadrics.  More precisely, $\RR_{d,k,m}$ is always defined by quadrics outside of a coordinate linear subspace of large codimension. In order to prove that, we recall a classical lemma from toric geometry.
\begin{lemma}\label{lem:toric generator}
	Let $s\in\N$ and let $I$ be the homogeneous toric ideal associated to a set of lattice points $A\subset\Z^n$. Let $g$ be a binomial which we write as $p_1+\ldots+p_t-(p_1'+\ldots+p_t')$, where $p_i,p_j'\in A$. Then $g\in I(s)$ if and only if starting from the multiset $\{p_i\}$ we can reach the multiset $\{p_j'\}$ in finitely many steps of the following form.
In each step we replace 	 $p_{i_1},\ldots,p_{i_s}$ with some other $r_{i_1},\ldots,r_{i_s}\in A$ such that $p_{i_1}+\ldots+p_{i_s}=r_{i_1}+\ldots+r_{i_s}$.
	
\end{lemma}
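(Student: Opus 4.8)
The statement to prove is Lemma \ref{lem:toric generator}, a standard characterization of when a binomial lies in the degree-$s$ part of a toric ideal, phrased combinatorially in terms of "moves" on multisets of lattice points.

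\textbf{Proof plan.} The plan is to unwind the definitions and reduce everything to the monomial (semigroup) language, then prove the two implications separately. Let me set up notation: identify a point $p \in A$ with the variable $y_p$, so that a monomial in $R = \C[y_p \mid p \in A]$ of degree $t$ corresponds to a multiset $\{p_{i_1},\dots,p_{i_t}\}$ of points of $A$, and the toric map $\varphi\colon R \to \C[\Z^n]$ sends $y_p \mapsto t^p$ (Laurent monomial), so a degree-$t$ monomial maps to $t^{p_{i_1}+\dots+p_{i_t}}$. The ideal $I = \ker\varphi$ is spanned by binomials $y^u - y^v$ with $\sum$(exponents of $u$) $= \sum$(exponents of $v$) in $\Z^n$, by \cite[Proposition 1.1.9]{CLS11}. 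The notation $I(s)$ denotes the $\C$-span of products $(\text{monomial}) \cdot (\text{degree-}s\text{ binomial in } I)$, i.e. the submodule generated by the degree-$s$ graded piece $I_s$; a binomial $g$ of degree $t \ge s$ lies in $I(s)$ iff it is an $R$-linear combination of degree-$s$ binomials from $I$.

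\textbf{The easy direction ($\Leftarrow$).} First I would handle the "if" direction. Suppose we can pass from $\{p_i\}$ to $\{p_j'\}$ by a finite sequence of elementary moves, each replacing an $s$-element sub-multiset by another $s$-element multiset with the same vector sum. A single such move, applied to a multiset $M = \{q_1,\dots,q_t\}$ replacing $\{q_1,\dots,q_s\}$ by $\{r_1,\dots,r_s\}$ with $q_1+\dots+q_s = r_1+\dots+r_s$, changes the monomial $y^M$ by exactly
\[
y_{q_1}\cdots y_{q_t} - y_{r_1}\cdots y_{r_s} y_{q_{s+1}}\cdots y_{q_t} = \bigl(y_{q_1}\cdots y_{q_s} - y_{r_1}\cdots y_{r_s}\bigr)\cdot y_{q_{s+1}}\cdots y_{q_t},
\]
which is a monomial times the degree-$s$ binomial $y_{q_1}\cdots y_{q_s} - y_{r_1}\cdots y_{r_s} \in I_s$, hence lies in $I(s)$. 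Now write the chain of moves as $M_0 = \{p_i\}, M_1, \dots, M_N = \{p_j'\}$ and telescope: $y^{M_0} - y^{M_N} = \sum_{a=1}^{N}(y^{M_{a-1}} - y^{M_a})$, and each summand is in $I(s)$ by the above, so $g = y^{M_0} - y^{M_N} \in I(s)$.

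\textbf{The harder direction ($\Rightarrow$).} Now suppose $g = y^{P} - y^{P'} \in I(s)$, where $P = \{p_i\}$, $P' = \{p_j'\}$, both of size $t$, with $\sum P = \sum P'$ in $\Z^n$. Write $g = \sum_{i=1}^r m_i (g_i - h_i)$ with $m_i$ monomials and $g_i - h_i \in I_s$ (degree-$s$ binomials in $I$; we may take the generators of $I_s$ to be binomials since $I$ is toric). The goal is to build the chain of moves. The standard trick here is to consider the graph (or more precisely, work with the equivalence relation) on degree-$t$ monomials generated by the single-move relation, and show $y^P$ and $y^{P'}$ are equivalent. The key observation is that every monomial of degree $t$ appearing anywhere in the expansion of $\sum m_i(g_i - h_i)$ is connected by a single move to another such monomial: indeed $m_i g_i$ and $m_i h_i$ differ by exactly one move (replace the $s$ points of $g_i$ by the $s$ points of $h_i$ inside the multiset $m_i g_i$). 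One then argues by a cancellation/induction argument: since the signed sum of all these monomials equals $y^P - y^P{}'$, after cancellation only $y^P$ (with coefficient $+1$) and $y^{P'}$ (with coefficient $-1$) survive; tracking which monomials cancel against which, and using that cancelling monomials are \emph{equal} (hence trivially move-equivalent) while $m_i g_i \sim m_i h_i$ always, one concludes $y^P \sim y^{P'}$ in the move-equivalence relation, which is exactly the desired chain. I expect \textbf{this direction to be the main obstacle}: making the cancellation bookkeeping rigorous requires a careful induction, e.g. on the total number $r$ of binomial generators used or on $\sum_i |m_i g_i|$ interpreted suitably, or alternatively phrasing it as: the formal sum $\sum m_i(g_i-h_i)$ is a cycle in a chain complex whose $1$-chains are move-edges, and $y^P - y^{P'}$ being a boundary forces a path. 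A clean way to run the induction: pick any monomial $M \ne y^P$ occurring in the expansion with nonzero coefficient before full cancellation; it must be cancelled, so it occurs with opposite signs in two different terms $m_i(g_i-h_i)$ and $m_j(g_j-h_j)$; use one of these, say $M = m_i g_i$, so $M \sim m_i h_i$; replace the pair of terms by a single term representing the "merged" relation and recurse on a strictly smaller instance. The base case is when the expansion has only the two monomials $y^P, y^{P'}$, forced to come from a single $m_1(g_1 - h_1)$, which is literally one move. Throughout I would lean on \cite[Proposition 1.1.9]{CLS11} to justify working with binomials only, and this is a well-known lemma (it appears e.g. in work on higher secant varieties and Gröbner degenerations of toric ideals), so the proof in the paper is likely just a reference or a short version of the above.
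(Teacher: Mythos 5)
Your "$\Leftarrow$" direction is correct and matches the (implicit) easy half of the paper's argument: a single move changes the monomial by a monomial multiple of a degree-$\le s$ binomial in $I$, and telescoping gives membership in $I(s)$.

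For "$\Rightarrow$" you identify the right starting point — write $g=\sum_{i=1}^k m_i b_i$ with the $b_i$ binomials of degree at most $s$ (legitimate since toric ideals are binomial) — but the specific induction you propose is not well-defined. You want to pick a cancelled monomial $M\neq y^P$, find two terms $m_i b_i$, $m_j b_j$ responsible for the cancellation, and "merge" them. The merged object $m_i b_i + m_j b_j = m_j g_j - m_i h_i$ is a degree-$t$ binomial but is \emph{not} of the form (monomial)$\cdot$(degree-$\le s$ binomial in $I$), so it cannot replace a term in the sum and the recursion does not stay inside the structure you need. Phrasing it homologically ($\partial c = y^P - y^{P'}$ forces a path) is sound, but you would then still owe a proof of that connectivity fact, which is essentially the lemma again.

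The paper's argument avoids this entirely and is strictly simpler: induct on $k$, the number of terms. Since the monomial $y^P$ has coefficient $1$ in $g$, it must occur in some term, say $m_1 b_1$, so $m_1 b_1 = y^P - y^Q$ for some multiset $Q$ reachable from $P=\{p_i\}$ in one move. Then $g - m_1 b_1 = y^Q - y^{P'} = \sum_{i=2}^k m_i b_i$ is again a binomial in $I(s)$ with a representation of length $k-1$; by induction $Q$ reaches $P'=\{p_j'\}$, and prepending $P\to Q$ finishes. No bookkeeping of which cancellations pair with which is needed, because one always anchors at $y^P$ rather than at an arbitrary cancelled monomial. Your last sentence (about the base case being a single $m_1(g_1-h_1)$) shows you were close to this; the fix is simply to always peel off the term containing $y^P$.
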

\begin{proof}
	A toric ideal is known to be binomial. %As $I$ is associated to a polytope, it is graded. 
	Thus, $f\in I(s)$ if and only if:
	$$f=\sum_{i=1}^k m_ib_i,$$
	where $m_i$ are monomials and $b_i\in I$ are binomials of degree at most $s$. The proof is by induction on $k$. The monomial of $f$ corresponding to $p_1+\dots+p_t$ must appear on the right hand side, say in $m_1b_1$. Then $f-m_1b_1$ is obtained by applying one step of the procedure described in the lemma. We conclude by induction.
\end{proof}

\begin{proposition}\label{prop:quad}
 	Fix $R=R_{d,k}$ as in Notation \ref{not:SR}. Let $I\subset R$ be the ideal of $\RR_{d,k,m}$ and let $I(2)\subset I$ be the vector subspace of degree two forms. Let $V\subset\p^{d^k-1}$ be the variety defined by $I(2)$. Then $\RR_{d,k,m}$ is an irreducible component of $V$.\end{proposition} As we will stress in Remark \ref{rmk:other components are small}, if there are other components, then they are contained in a small subspace of $\p^{d^k-1}$.
 	\begin{proof}
 		Since $I(2)\subset I$, it is clear that $\RR_{d,k,m}\subset V$. We want to find a polynomial $f\in R\setminus I$ such that the saturation of $I(2)$ with respect to $f$ is $I$. Recall that such saturation is defined by
 		\[(I(2):f^\infty):=\{g\in R\mid gf^k\in I(2)\mbox{ for some } k\in\N\}.
 		\]
 		So we want to prove that there exists an $f$ such that $(I(2):f^\infty)=I$. This implies that $I$ and $I(2)$ coincide in the localization $R_f$ and therefore $V$ and $\RR_{d,k,m}$ coincide in the affine open subset $(f\neq 0)$. In particular, $\RR_{d,k,m}$ is an irreducible component of $V$.
 		
 		Let $\mu=\dim\Lie^m(\R^d)$ and let $A\subset\N^\mu$ be the set of lattice points associated to the monomials defining the toric variety $\RR_{d,k,m}$. By Theorem \ref{thm:toricR}, points of $A$ are of the form
 		\[(w_{1,1},\ldots,w_{1,a_1},w_{2,1},\ldots,w_{2,a_2},\ldots,w_{m,1},\ldots,w_{m,a_m})\text{ where }\displaystyle{\sum_{i=1}^m\sum_{j=1}^{a_i} iw_{i,j}=k}.\]
 		%where $\displaystyle{\sum_{i=1}^m\sum_{j=1}^{a_i} iw_{i,j}=k}$. 
 		By \cite[Proposition 1.1.9]{CLS11}, there exists a set of binomial generators of $I$. Each of such binomials corresponds to an integral relation $p_1+\ldots+p_t=p_1'+\ldots+p_t'$, where all $p_i$ and $p_i'$ are in $A$ (the sum of points corresponds to the product of variables).
 		Our task is to find $f\notin I$ such that $f^ng\in I(2)$ for every generator $g=p_1+\ldots+p_t-p_1'-\ldots-p_t'$ of $I$ for some $n$. We will define $f$ to be a variable. In toric words, $f$ corresponds to a lattice point $p\in A$% of the form mentioned above
 		. By Lemma \ref{lem:toric generator}, we want to find $p\in A$ such that
 		$p+\ldots+p+p_1+\ldots+p_t$ can be turned in $p+\ldots+p+p_1'+\ldots+p_t'$ by repeatedly replacing \emph{a pair} of summands with another pair having the same sum. Let
 		\[f=p:=(k,0,\dots, 0)\] and $p_1=(w_{1,1},\ldots,w_{1,a_1},w_{2,1},\ldots,w_{2,a_2},\ldots,w_{m,1},\ldots,w_{m,a_m})$. Assuming $w_{2,1}>0$, we can replace $p+p_1$ by
 		\[(k-2,0,\dots,0,1,0,\dots,0)+(w_{1,1}+2,w_{1,2},\ldots,w_{1,a_1},w_{2,1}-1,w_{2,2}\dots,w_{2,a_2}%, \ldots,w_{m,1}
 		,\ldots,w_{m,a_m}),\]
 		i.e.~we replace two on the first coordinate with one on the coordinate corresponding to $w_{2,1}$.
 		%Notice we can do that by keeping both summands in $M$.
 		%Notice that the thesis is trivial when $k=1$, so we may safely assume $k-2\in\N$. 
 		By iterating this process, we add more copies of $p$ so that we can replace $p+\ldots+p+p_1$ by points for which \emph{at most one coordinate} $w_{i,j}$ is nonzero for $(i,j)\neq(1,1)$, and that coordinate is equal to one.
 		We do the same to $p_2,\dots,p_t$, so that we can replace $p+\ldots+p+p_1+\ldots+p_t$ by a sum of points for which at most one coordinate $w_{i,j}$ is nonzero for $(i,j)\neq(1,1)$, and that coordinate is equal to one. We apply the same process to $p_1'+\ldots+p_t'$. Since now both sides are broken into this kind of simple pieces, and since the condition of having weighted sum $k$ is always preserved, the only possibility is that the summands are pairwise the same.
 	\end{proof}
 
 \begin{remark}\label{rmk:other components are small}
In the proof of Proposition \ref{prop:quad} we could have chosen the point $p$ to be any point with coordinates $w_{i,j}=0$ for $i>1$. This proves that $I(2)$ and $I$ define the same scheme (in particular, the quadrics define the correct set) outside of the locus where \emph{all} these coordinates vanish. Therefore the other components of the variety defined by $I(2)$ (embedded or not) must be supported on a coordinate subspace of large codimension.
 \end{remark}

\subsection{Normality of the rough Veronese Variety}\label{sub:GeomRVV}
A classical approach to study geometry of a projective toric variety is to look at the associated lattice polytope \cite{CLS11, BerndBook, Fulton}. In our case the central object is presented in the following definition.

\begin{definition}\label{def:lattice polytope}For $k\in\N$ and $w=(w_1,\dots,w_r)\in\N^r$, we define the lattice polytope $P(w,k)$ as the convex hull of
	\[\left\lbrace (t_1,\dots,t_r)\in\N^r\ \left|\ \sum_{i=1}^{r}w_it_i=k\right. \right\rbrace .
	\]
\end{definition}
In this notation, the polytope associated to $\RR_{d,k,m}$ is $P((1^{a_1},2^{a_2},\dots,m^{a_m}),k)$, where $a_i$ is the number of Lyndon words of length $i$ in the alphabet $\{1,\dots,d\}$.
Many authors, like \cite{CLS11, Fulton}, % especially in theoretical mathematics,
 require a toric variety to be \emph{normal}. This is equivalent to the fact that the toric variety may be represented by a fan. Hence, the first important task is to decide when $\RR_{d,k,m}$ is normal. One advantage is that normality can be checked on the polytope.
\begin{lemma}\label{lem:easy facts about normality}
	A polytope $P$ is normal if and only if $kP$ is normal for every $k\in\N$. Moreover, if $X_P$ is the associated toric variety, then
	\begin{enumerate}
		\item $X_P$ is normal if and only if $P$ is very ample,
		\item $X_P$ is projectively normal (i.e.~the affine cone over it is normal) if and only if $P$ is normal.
	\end{enumerate}
\begin{proof}
	These results are proven in \cite[Chapter 2]{CLS11}, together with many other features of normal polytopes.
\end{proof}
\end{lemma}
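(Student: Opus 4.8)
The plan is to reduce both notions of normality occurring in the statement to a single one, namely \emph{saturation} of an affine semigroup, and then to read the three assertions off the standard dictionary relating affine semigroups, rational polyhedral cones, and their coordinate rings. The one genuinely non-formal ingredient is the classical fact that if $S$ is an affine semigroup with group $\Z S$ and $\sigma$ is the cone generated by $S$, then the integral closure of $\C[S]$ equals $\C[\sigma\cap\Z S]$; in particular $\C[S]$ is normal if and only if $S=\sigma\cap\Z S$. This, together with the combinatorial description of the affine charts of a projective toric variety, is precisely the content of \cite[Ch.~1--2]{CLS11}, so in a full write-up I would quote it and spend my energy on the bookkeeping. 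Throughout, $M$ denotes the ambient lattice of $P$, and $X_P$ is the closure of the monomial map given by the lattice points $P\cap M$.

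I would first dispose of the equivalence ``$P$ is normal $\iff$ $kP$ is normal for every $k\in\N$''. The implication $(\Leftarrow)$ is trivial, taking $k=1$. For $(\Rightarrow)$, recall that $P$ normal means $jP\cap M=\underbrace{(P\cap M)+\dots+(P\cap M)}_{j}$ for every $j\ge1$. Then for all $j\ge1$,
\begin{align*}
j(kP)\cap M &= (jk)P\cap M=\underbrace{(P\cap M)+\dots+(P\cap M)}_{jk}\\
&=\underbrace{(kP\cap M)+\dots+(kP\cap M)}_{j},
\end{align*}
the middle equality being normality of $P$ in degree $jk$, and the last one regrouping a sum of $jk$ lattice points of $P$ into $j$ blocks of $k$, each such block lying in $kP\cap M$ by normality of $P$ in degree $k$. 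Hence $kP$ is normal.

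For statement (2), I would set $C(P)\subseteq M_\R\times\R$ to be the cone over $P\times\{1\}$, and let $S_{C(P)}$ be the affine semigroup generated by $(P\cap M)\times\{1\}$. The affine cone over $X_P$ has homogeneous coordinate ring $\C[S_{C(P)}]$ (the toric ideal is the kernel of $y_m\mapsto(m,1)$), graded by the last coordinate; so $X_P$ is projectively normal iff $\C[S_{C(P)}]$ is normal iff $S_{C(P)}=C(P)\cap(M\times\Z)$. Comparing these two semigroups in last-coordinate degree $k$, the left side is $\big(\underbrace{(P\cap M)+\dots+(P\cap M)}_{k}\big)\times\{k\}$ and the right side is $(kP\cap M)\times\{k\}$. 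Thus the saturation equality holds in all degrees at once precisely when $kP\cap M$ is the $k$-fold Minkowski sum of $P\cap M$ for every $k\ge1$, i.e.\ precisely when $P$ is normal. This proves (2).

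For statement (1), I would cover $X_P$ by the affine opens $U_v:=X_P\cap\{y_v\ne0\}$ as $v$ ranges over the vertices of $P$ (these are lattice points since $P$ is a lattice polytope). These sets do cover $X_P$: the indices of the nonvanishing coordinates of a point of $X_P$ are the lattice points of a nonempty face of $P$, which contains a vertex \cite[Ch.~2]{CLS11}. Dehomogenizing the monomial parametrization at $y_v$ identifies the coordinate ring of $U_v$ with $\C[S_v]$, where $S_v$ is the semigroup generated by $\{m-v\mid m\in P\cap M\}\subseteq M$. As normality is local, $X_P$ is normal iff every $U_v$ is normal iff every $S_v$ is saturated in $M$, which is verbatim the definition of ``$P$ is very ample''; this gives (1). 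The step demanding the most care is this last one: one must keep in mind that $X_P$ is the monomial image (which need not be normal, its normalization being the toric variety of the normal fan of $P$), verify that the vertex charts genuinely cover it, and match their coordinate rings with the $\C[S_v]$ — all of which is carried out in detail in \cite[Ch.~2]{CLS11}, which is why a citation is a legitimate proof here.
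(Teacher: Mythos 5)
Your proposal is correct and follows essentially the same route as the paper, which simply cites \cite[Chapter~2]{CLS11}: you unpack that citation by reducing both normality notions to saturation of affine semigroups and reading the claims off the standard cone--semigroup--coordinate-ring dictionary (projective normality via the cone over $P\times\{1\}$, normality via the vertex charts $\C[S_v]$, very ampleness as saturation of each $S_v$). One small expository slip: in the chain establishing $j(kP)\cap M=\underbrace{(kP\cap M)+\dots+(kP\cap M)}_{j}$, the fact that a sum of $k$ lattice points of $P$ lies in $kP\cap M$ needs only convexity, not normality in degree $k$; normality of $P$ is used exactly once, to write a lattice point of $(jk)P$ as a sum of $jk$ lattice points of $P$. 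The mathematics is otherwise sound.
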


Lemma \ref{lem:easy facts about normality} will help us to determine when $\RR_{d,k,m}$ is normal. We start with the following result, that simplifies the problem.

\begin{lemma}\label{lem:reduce to single variables}$P((w_1^{a_1},w_2^{a_2},\dots,w_m^{a_m}),k)$ is normal if and only if $P((w_1,w_2,\dots,w_m),k)$ is normal. %This means that adding or discarding repeated entries of $w$ does not affect the normality of $P(w,k)$. 
In particular, the normality of $\RR_{d,k,m}$ does not depend on $d$.
\begin{proof} %In order to simplify notations, set $P=P((w_1^{a_1},w_2^{a_2},\dots,w_m^{a_m}),k)$. 
By induction, it is enough to check what happens when we add or discard one of the entries. %Thus from now on we let 
In order to simplify notation, set $$P:=P((w_1,\dots,w_{i-1},w_i,w_i',w_{i+1},\dots,w_m),k),$$ where $w_i=w_i'$ and we do not assume that the $w_j$'s are distinct. %, therefore it is not restrictive to assume that $P$ has only one repeated entry $w_i$. 
%In order to distinguish the two repeated entries, we will call them $w_i$ and $w_i'$.	
Observe that one of the facets of $P$ is precisely the polytope $$Q:=P((w_1,\dots, w_i,w_{i+1},\dots,w_m),k).$$ Since every face of a normal polytope is normal, the first implication follows.

Assume now that $Q$ is normal. Consider the linear surjection $\sim:P\to Q$, defined by the sum of the two entries corresponding to $w_i$ and $w_i'$. If $t=(t_1,\ldots,t_{i-1},t_i,t_i',t_{i+1},\dots,t_m)\in P$, then $\tilde{t}=(t_1,\ldots,t_{i-1},t_i+t_i',t_{i+1},\dots,t_m)\in Q$. Note that $\sim$ is also surjective on lattice points, that is $\sim:P\cap\N^r\to Q\cap\N^{r-1}$ is surjective.  Let $p$ be a lattice point in some multiple $sP$ of $P$. We can write $p=\lambda_1p_1+\ldots+\lambda_sp_s$, where $p_1,\dots,p_s\in P$ are lattice points and $\lambda_1+\ldots+\lambda_s=s$. Since $\tilde{p}\in sQ$, $Q$ is normal by hypothesis and $\sim$ is surjective on lattice points, there are $x_1,\ldots,x_s\in P\cap\N^r$ such that $\tilde{p}=\lambda_1\tilde{p}_1+\ldots+\lambda_s\tilde{p}_s=\tilde{x}_1+\ldots+\tilde{x}_s$. Let $x_i\in P$ be the preimage of $\tilde{x}_i$ having 0 in the entry corresponding to $w_i'$, and let $x=x_1+\ldots+x_s$. Since $\tilde{p}=\tilde{x}$,  $p$ and $x$ coincide on every coordinate except the ones corresponding to $w_i$ and $w_i'$. Moreover, these entries have the same sum, and such sum is an integer by construction of $x$. Then it is enough to increase the zero entry of some suitable $x_i$, keeping such sum untouched, to obtain from $x_1,\dots, x_s$ another set of $s$ lattice points of $P$ whose sum is $p$. Therefore $P$ is normal.\end{proof}
\end{lemma}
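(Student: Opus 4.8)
The statement to prove is Lemma~\ref{lem:reduce to single variables}: that $P((w_1^{a_1},\dots,w_m^{a_m}),k)$ is normal iff $P((w_1,\dots,w_m),k)$ is normal, and as a consequence that normality of $\RR_{d,k,m}$ is independent of $d$. By an easy induction it suffices to compare $P:=P((w_1,\dots,w_{i-1},w_i,w_i',w_{i+1},\dots,w_m),k)$ (where $w_i=w_i'$ is a repeated value) with $Q:=P((w_1,\dots,w_i,\dots,w_m),k)$, i.e.\ to handle adding or removing a single duplicated coordinate. The ``only if'' direction is immediate: $Q$ is obtained from $P$ by intersecting with the coordinate hyperplane $\{t_i'=0\}$, so $Q$ is a face of $P$, and every face of a normal polytope is normal (a standard fact, see \cite[Chapter~2]{CLS11}); together with Lemma~\ref{lem:easy facts about normality} this gives one implication. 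The work is in the converse.

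**Main construction.** For the ``if'' direction, assume $Q$ is normal. Introduce the ``collapsing'' map $\sim\colon \R^{r}\to\R^{r-1}$ that replaces the pair of coordinates indexed by $w_i,w_i'$ by their sum, leaving all other coordinates fixed; this restricts to a surjection $P\to Q$ and, crucially, to a \emph{surjection on lattice points} $P\cap\Z^{r}\to Q\cap\Z^{r-1}$ (given an integer splitting on the $Q$ side, split it into two nonnegative integers on the $P$ side). Now take a lattice point $p$ in a dilate $sP$; write $p=\lambda_1 p_1+\dots+\lambda_s p_s$ with $p_j\in P$ lattice points and $\sum\lambda_j=s$. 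Push forward: $\tilde p\in sQ$, so by normality of $Q$ there are lattice points $x_1,\dots,x_s\in P\cap\Z^r$ with $\tilde p=\tilde x_1+\dots+\tilde x_s$; choose each $x_j$ to be the preimage of $\tilde x_j$ with a $0$ in the $w_i'$-slot. Set $x=x_1+\dots+x_s$. Then $x$ and $p$ agree in every coordinate except the $w_i$- and $w_i'$-slots, and in those two slots they have the \emph{same sum} (an integer). The final move is a redistribution argument: transfer the appropriate integer amount from the $w_i$-slot of some $x_j$ into its $w_i'$-slot (keeping that $x_j$ in $P$, which is safe because only the split of a fixed sum between two equal-weight coordinates changes) so that the corrected $x_1,\dots,x_s$ sum exactly to $p$. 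This exhibits $p$ as a sum of $s$ lattice points of $P$, proving $P$ normal. The statement about $\RR_{d,k,m}$ then follows by stripping each multiplicity $a_i$ down to $1$.

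**Expected obstacle.** None of the steps is deep, but the delicate point — and the one I would write out most carefully — is verifying that the ``redistribution'' step can always be performed while keeping each $x_j$ a lattice point of $P$: one must check that moving mass between the two equal-weight slots does not push any $x_j$ outside $P$ (it does not, since the weighted-sum constraint $\sum_j w_j t_j = sk$-type condition and nonnegativity are the only defining inequalities, and both are preserved), and that the total amount to be moved is a nonnegative integer distributable among the $s$ points (this uses that $p$ and $x$ have equal integer sums in those slots, and that we may always shift a unit at a time). A secondary subtlety is making precise that $\sim$ is surjective on lattice points even onto dilates $sQ$; this is where having $w_i=w_i'$ is essential, since it lets any integer be written as a sum of two nonnegative integers with no weight obstruction. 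Once these two bookkeeping points are pinned down, the proof is complete.
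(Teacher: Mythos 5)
Your proposal is correct and follows the paper's argument essentially verbatim: same reduction by induction to a single duplicated coordinate, same face argument for the easy direction, same collapsing map surjective on lattice points, same convex-combination push-forward to $sQ$, same choice of preimages with zero in the $w_i'$-slot, and the same final redistribution between the two equal-weight slots. The subtleties you flag (redistribution preserves membership in $P$ because $w_i=w_i'$; surjectivity on lattice points) are exactly the ones the paper handles implicitly in the same way.
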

In general, the rough Veronese variety does not need to be normal. As we show using the code available online \cite{CGM19}, $P ((1, 2, \ldots, 9), 18)$ is not very ample, so $\RR_{d,18,9}$ is not normal for any $d\geq 2$.

In algebraic geometry, given a map $f=(f_1,\dots,f_s)$ defined by homogeneous polynomials of the same degree, it is natural to study the induced \emph{rational map} $\tilde f$ between projective spaces. The map $\tilde f$ may be not defined everywhere - the locus where all of the polynomials $f_i$ vanish is called the \emph{base locus} or \emph{indeterminacy locus} of $\tilde f$. In our setting, the monomials defining $\RR_{d,k,m}$ are of degree $k$, however in variables that are of different degrees. This implies that instead of considering the classical projective space, we need the \emph{weighted projective space}. Theorem \ref{thm:toricR} tells us that $\RR_{d,k,m}$ is the closure of the image of a rational map
$$\p(1^{a_1},\dots,m^{a_m})\dashrightarrow\p^{d^k-1}.$$
The codomain is the usual projective space and the domain is the weighted projective space with $\sum_i a_i$ variables of weights as given in the brackets. The map is given by all monomials of degree $k$.

%We also define $h_{k,m}:\p(1,2,\ldots,m)\dashrightarrow\p^M$ to be the map defined by all monomials of weighted degree $k$. Then the image of $h_{k,m}$ is the toric variety associated to the polytope $P((1,2,\dots,m),k)$ of Definition \ref{def:lattice polytope}. By Lemma \ref{lem:reduce to single variables}, $\RR_{d,k,m}$ is normal if and only if the image of $h_{k,m}$ is normal. In particular, the normality of $\RR_{d,k,m}$ does not depend on $d$.

Before we proceed further, let us recall that maps from an algebraic variety $X$ to a projective space are studied through line bundles (or equivalently Cartier divisors) on $X$ \cite{Lazarsfeld}. The theory of line bundles on weighted projective spaces is very well understood. The Picard group equals $\Z$ where $\bf{1}$ is an ample generator. The global sections of this generator may be identified with all monomials of degree $l$ equal to the least common multiple of all the degrees appearing in the weighted projective space. Thus, it is customary to denote it by $\mathcal{O}(l)$. In particular, the elements of the Picard group will be denoted by $\mathcal{O}(s)$, for $s$ divisible by $l$.
Although the class group is abstractly also equal to $\Z$, it is larger. The inclusion of the Picard group in the class group $\Z\hookrightarrow\Z$ is given by multiplication by $l$. In particular, the elements of the class group will be denoted by $\mathcal{O}(s)$, for arbitrary $s$. 

\begin{lemma}
Let $\p(w_1,\dots,w_s)$ be any weighted projective space and let $l=\lcm(w_1,\dots,w_s)$. The map from $\p(w_1,\dots,w_s)$ to a projective space given by all monomials of degree $k$ does not have a base locus if and only if $l|k$.
\begin{proof}
	If $w_1\nmid l$, then $[1:0:\dots:0]$ is a base point. If all $w_i|l$ then the monomials $x_i^{\frac{l}{w_i}}$ do not have a base locus.
\end{proof}\end{lemma}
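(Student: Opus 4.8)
The statement to be proved: the map $\p(w_1,\dots,w_s)\dashrightarrow \p^N$ given by all monomials of degree $k$ has empty base locus if and only if $l\mid k$, where $l=\lcm(w_1,\dots,w_s)$. The plan is to prove each implication separately, working directly with the explicit description of $\p(w_1,\dots,w_s)$ as $\Proj$ of the graded ring $\C[x_1,\dots,x_s]$ with $\deg x_i=w_i$, so that a point is a nonzero tuple $(a_1:\dots:a_s)$ modulo the weighted scaling $a_i\mapsto t^{w_i}a_i$, and the base locus of the linear system of degree-$k$ monomials is the set of points where every monomial of weighted degree $k$ vanishes.

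For the ``only if'' direction I would argue contrapositively: suppose $l\nmid k$. Then there is an index $i$ with $w_i\nmid k$ (since if every $w_i\mid k$ then $l=\lcm(w_i)\mid k$). Consider the coordinate point $P_i=(0:\dots:0:1:0:\dots:0)$ with the $1$ in the $i$-th slot. Any monomial $x_1^{c_1}\cdots x_s^{c_s}$ of weighted degree $k$ that does not vanish at $P_i$ must be a pure power $x_i^{c_i}$ with $c_i w_i=k$, which is impossible since $w_i\nmid k$. Hence every degree-$k$ monomial vanishes at $P_i$, so $P_i$ lies in the base locus. (The excerpt's one-line proof does this for $i=1$; the only mild care needed is to pick the right index, which is exactly the observation that $l\nmid k$ forces some $w_i\nmid k$.)

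For the ``if'' direction, suppose $l\mid k$. For each $i$ we have $w_i\mid l\mid k$, so $x_i^{\,k/w_i}$ is a genuine monomial of weighted degree $k$. Given any point $Q=(a_1:\dots:a_s)$ of $\p(w_1,\dots,w_s)$, at least one coordinate $a_i$ is nonzero, and then the monomial $x_i^{\,k/w_i}$ does not vanish at $Q$. Hence no point is in the base locus. One should also note that $k/w_i$ is a nonnegative integer, so these are legitimate elements of the degree-$k$ piece of the graded ring, and the argument is complete.

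I do not expect any serious obstacle here: the statement is essentially a bookkeeping exercise once one unwinds the definition of the weighted projective space and of the base locus, exactly as in the excerpt's terse proof. The only point that requires a sentence of justification rather than pure computation is the implication $(\forall i\, w_i\mid k)\Rightarrow l\mid k$, i.e.\ that $l$ is the \emph{least} common multiple; everything else is immediate from exhibiting explicit nonvanishing (or uniformly vanishing) monomials.
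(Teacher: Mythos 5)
Your proof is correct and follows essentially the same approach as the paper. The paper's one-line proof contains typos ($l$ written where $k$ is meant, both in the condition $w_1\nmid l$ and in the exponents $l/w_i$); your argument correctly supplies the intended statements, and additionally makes explicit the small step that $l\nmid k$ forces some $w_i\nmid k$, which the paper's choice of $i=1$ glosses over.
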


As there is some confusion in the literature, we stress that even if $l|k$ the map from the previous lemma does not have to be an embedding. In other words $\mathcal{O}(l)$, although always ample, may be not very ample.
\begin{example}
The polytope $P((1,6,10,15),30)$ is not very ample. Equivalently, the map $\p(1,6,10,15)\to \p^{17}$ given by all monomials of weighted degree 30 is not an embedding. This can be checked with the software \emph{Normaliz} \cite{Normaliz}, or by studying the polytope.
\end{example}
This raises the following question: when is $\RR_{d,k,m}$ an embedding of the weighted projective space $\p(1^{a_1},\dots, m^{a_m})$? 
We will prove that this happens if and only if $k$ is a multiple of all natural numbers between 1 and $m$. Equivalently, $\mathcal{O}(k)$ is a very ample line bundle if and only if $\lcm(2,\dots,m)\mid k$.
We will need the following technical lemma.
\begin{lemma}\label{lem:there are enough primes} If $m\ge 7$ and $m\neq 10$, then there exist two distinct prime numbers strictly larger than $\frac{m}{2}$ and at most equal to $m$ such that their sum is not a power of two.
	\begin{proof} If $m\le 56$, we can easily find the required primes.
		\[\begin{array}{|c|c|}
			\hline
			32\le m\le 56 & 29\mbox{ and }31\\
			\hline			20 \le m \le 31& 17\mbox{ and }19\\
				\hline		14 \le m \le 19& 11\mbox{ and }13\\
				\hline		11\le m \le 13& 7\mbox{ and }11\\
			\hline			m\in\{7,8,9\}& 5\mbox{ and }7\\\hline
		\end{array}\]
%		\begin{itemize}
%			\item For $32\le m\le 56$, take the prime numbers 29 and 31,
%			\item for $20 \le m \le 31$, take the prime numbers 17 and 19,
%			\item for $14 \le m \le 19$, take the prime numbers 11 and 13,
%			\item for $11\le m \le 13$, take the prime numbers 7 and 11,
%			\item for $m\in\{7,8,9\}$, take the prime numbers 5 and 7 %,and for $m=5$, take 3 and 5.		\end{itemize}
	Assume then $m\ge 57$. For $x\in\N$, let $\pi(x)$ be the number of prime numbers smaller or equal than $x$. In \cite{RS62}, the authors show that
		$$\frac{x}{\log x} < \pi(x) < 1.3 \frac{x}{\log x}$$
		for every $x\geq 17$. Now,
		\[
		\pi(m)-\pi\left(\frac{m}{2}\right)\ge \frac{m}{\log m} - 1.3\frac{m}{2 \log \left(\frac{m}{2}\right)},\] %= m\cdot\frac{\log m-\log 2-0.65\log m}{\log m (\log m - \log 2)},
%		\]
		and the latter is at least 3 for $m\geq 57$, so there are at least three prime numbers $a<b<c$ between $\frac{m}{2}$ and $m$. If $b+c$ is not a power of 2, we are done. Otherwise we have $b+c=2^n$. Then
		\[2^{n-1}=\frac{b+c}{2}\le m<a+b<b+c=2^n,\]
		so $a+b$ cannot be a power of 2.
	\end{proof}
\end{lemma}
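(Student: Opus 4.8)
The plan is to prove Lemma~\ref{lem:there are enough primes} in two regimes, exactly as the statement invites: a finite check for small $m$ and an analytic argument via prime counting for large $m$. For the finite range, say $7 \le m \le 56$ (excluding $m = 10$), one exhibits an explicit pair of primes in the window $\left(\tfrac{m}{2}, m\right]$ whose sum avoids the powers of two. The key observation is that one does not need a fresh pair for every value of $m$: a pair $(p,q)$ with $\tfrac{m}{2} < p < q \le m$ works simultaneously for every $m$ with $q \le m < 2p$, so one only needs to cover $[7,56]$ by finitely many such intervals. The natural choice is to walk down through the ``twin-prime-like'' pairs $(29,31)$, $(17,19)$, $(11,13)$, and $(7,11)$, $(5,7)$, checking in each case that the sum ($60$, $36$, $24$, $18$, $12$) is not a power of two; this is precisely the table in the statement, and the only subtlety is confirming the intervals genuinely tile $[7,56]$ with $m=10$ the lone gap (there, $5$ and $7$ would give sum $12$ but $5 \le \tfrac{10}{2}$, so the window is too narrow --- hence the exclusion).

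For $m \ge 57$ the idea is to force \emph{three} primes into the window $\left(\tfrac{m}{2}, m\right]$ and then argue combinatorially that at least one pair among them has a non-power-of-two sum. First I would invoke the Rosser--Schoenfeld bounds $\tfrac{x}{\log x} < \pi(x) < 1.3\,\tfrac{x}{\log x}$ (valid for $x \ge 17$, which holds since $\tfrac{m}{2} \ge 28$), to get
\[
\pi(m) - \pi\!\left(\tfrac{m}{2}\right) > \frac{m}{\log m} - \frac{1.3\,m}{2\log(m/2)}.
\]
The analytic heart is a monotonicity/estimate argument showing the right-hand side exceeds $3$ for all $m \ge 57$: treating it as a function of $m$, one checks it at $m = 57$ and verifies it is eventually increasing (the $\tfrac{m}{\log m}$ term dominates), so there are at least three primes $a < b < c$ in $\left(\tfrac{m}{2}, m\right]$. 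Then the clean endgame: if $b + c$ is not a power of two we are done with the pair $(b,c)$; otherwise $b + c = 2^n$, and since $b, c \le m$ while $b, c > \tfrac{m}{2}$ we get $2^{n-1} = \tfrac{b+c}{2} \le m < a + b < b + c = 2^n$, so $a + b$ lies strictly between consecutive powers of two and the pair $(a,b)$ works.

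The main obstacle I anticipate is the analytic inequality $\frac{m}{\log m} - \frac{1.3\,m}{2\log(m/2)} \ge 3$ for $m \ge 57$: the Rosser--Schoenfeld constants are not tight, so the bound is genuinely close to failing for moderate $m$, and one must be careful that the threshold $57$ is correct rather than, say, needing $m \ge 60$ or a sharper version of the prime-counting inequality. I would handle this by making the finite check overlap the analytic regime --- extending the explicit table a bit past $57$ if needed --- so that any slack in where the analytic bound ``kicks in'' is absorbed. The rest is routine: the recursive structure of the power-of-two argument is robust, and the finite verification is a short computation. Everything else in the lemma is bookkeeping about which interval of $m$-values each prime pair covers.
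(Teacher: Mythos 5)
Your proposal follows essentially the same approach as the paper: a finite table of explicit prime pairs for $7 \le m \le 56$ (with $m=10$ excluded for precisely the reason you state), and for $m \ge 57$ the Rosser--Schoenfeld bounds producing three primes $a<b<c$ in $(\tfrac{m}{2},m]$, followed by the observation that if $b+c=2^n$ then $a+b$ lies strictly between $2^{n-1}$ and $2^n$. You are also right to flag that the inequality $\frac{m}{\log m} - \frac{1.3m}{2\log(m/2)}\ge 3$ is tight near $m=57$ (the value there is about $3.04$); the paper asserts this without comment, and your instinct to have the finite table overlap the analytic threshold is a sensible robustness check, but the bound does hold as stated.
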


\begin{theorem}\label{thm: normality of rough Veronese}
If $i\mid k$ for every $i\in\{1,\dots,m\}$, then $\RR_{d,k,m}$ is projectively normal.
\begin{proof} For $m\le 6$, normality can be explicitly checked with the software \emph{Polymake} \cite{polymake}, so we assume $m\ge 7$.
By Lemmas \ref{lem:easy facts about normality} and \ref{lem:reduce to single variables}, we may assume that $k=\lcm(1,\dots,m)$ and study the map
\[\p(1,2,\ldots,m)\to\p^N\]
defined by all monomials of weighted degree $k$. Let $P:=P((1,2,\ldots,m),k)$ be the associated polytope. For $s\in\N$, consider the dilation $sP$ of $P$. A lattice point of $sP$ is of the form $(a_1,\dots,a_m)\in\N^m$ with $a_1+2a_2+\ldots+ma_m=sk$. We can write it as a multiset
\[M=\{1^{a_1},\dots,m^{a_m}\}.\]
By induction on $s$, we only need to prove that there exists a submultiset $S$ of $M$ whose entries sum up to $k$. Let us modify $M$ to $M'$ in the following way. If there are $i,j\in M$ such that $i,j\le \frac{m}{2}$, then discard $i$ and $j$ and add $i+j$. Notice that $i+j\le m$. %and so, the least common multiple of the entries of $M'$ is also $k$. Now, i
If $S'$ is a submultiset of $M'$ whose entries sum up to $k$, then either $S$ does not contain $i+j$, and therefore $S=S'$ is a submultiset of $M$ as well, or $S'$ contains $i+j$, and we define $S$ by replacing back $i+j$ with $i,j$. By iterating this argument, we can assume that there is at most one $i\in\{1,\dots,m\}$ such that $i\le\frac{m}{2}$.

%Iterate this step until there is at most one entry in $M'$ smaller than $\frac{m}{2}$. 
The resulting multiset $M'$ is of the form $\left\{i_1^{b_{i_1}},\dots, i_t^{b_{i_t}} \right\}$. Let $k'=\lcm(i_1,\dots,i_t)$ and consider the map
\[\p(i_1,i_2,\ldots,i_t)\to\p^T\]
given by all the monomials of weighted degree $k'$. Its image is associated to the polytope $P':=P((i_1,i_2,\ldots,i_t),k')$.  By construction,  $k'$ divides $k$ and $\dim P'\le t-1 \le \frac{m}{2}$. Observe that any point of $P'$ has entries summing up to $k'$. Since the entries of $M'$ sum up to $sk$, $M'$ is a point in $rP'$ for $r = \frac{sk}{k'}$. In fact, $M'\in s\left(\frac{k}{k'}P'\right)$ and so, it is enough to prove that $\frac{k}{k'}P'$ is normal. This will imply that $M'$ is a sum of lattice points of $\frac{k}{k'}P'$, and therefore it admits the required submultiset $S'$. 

If there exists a prime number $\frac{m}{2}\le p \le m$ such that $p\nmid k'$, then $p\mid \frac{k}{k'}$ and so $\frac{k}{k'} \ge p \ge \frac{m}{2} \ge \dim P'$. In this case, $\frac{k}{k'}P'$ is normal by \cite[Theorem 2.2.12]{CLS11}. This guarantees the existence of the desired $S'$. 
Otherwise, assume $m\neq 10$. By Lemma \ref{lem:there are enough primes}, there exist two prime numbers $p_1, p_2$ between $\frac{m}{2}$ and $m$ such that $p_1+p_2$ is not a power of $2$. Up to relabelling, we can assume that $b_{p_1}\le b_{p_2}$. 

Once more, we have to modify $M'$ by making some replacements. For $b_{p_1}$ times, discard an entry $p_1$ and an entry $p_2$ and add an entry $p_1+p_2$. Denote by $M''$ the resulting multiset and by $P''$ the associated polytope, that satisfies $\dim P'' \le \frac{m}{2}$. As before, it is enough to find a submultiset $S''$ of $M''$ such that the entries of $S''$ sum up to $k$. By construction, the entries of $M''$ still sum up to $sk$ and their least common multiple $k''$ divides $k$. Indeed, the sum $p_1+p_2$ is even and not a power of two, thus all powers of prime numbers that divide it are smaller or equal to $m$. Notice that neither $p_1$, nor any of its multiples appear in $M''$. Therefore, $p_1 \mid k$ and $p_1\nmid k''$, which implies that  $p_1\mid \frac{k}{k''}$ and so $\frac{k}{k''} \ge p_1 \ge \frac{m}{2} \ge \dim P''$. By \cite[Theorem 2.2.12]{CLS11}, $\frac{k}{k''}P''$ is normal. This guarantees the existence of the desired $S''$.

The last case $m=10$ can be explicitly checked with the software Normaliz.
\end{proof}
\end{theorem}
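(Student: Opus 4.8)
The plan is to recast the statement as a combinatorial normality property of one lattice polytope, and then to exploit the classical fact that a large enough dilate of any lattice polytope is normal. By Lemma~\ref{lem:easy facts about normality}(2), projective normality of $\RR_{d,k,m}$ is equivalent to normality of the lattice polytope associated with it, which by Theorem~\ref{thm:toricR} is $P((1^{a_1},\dots,m^{a_m}),k)$; by Lemma~\ref{lem:reduce to single variables} this reduces further to normality of $P((1,2,\dots,m),k)$. Since the hypothesis reads $\lcm(1,\dots,m)\mid k$, Lemma~\ref{lem:easy facts about normality} allows me to replace $k$ by $\lcm(1,\dots,m)$, so from now on $k:=\lcm(1,\dots,m)$ and $P:=P((1,2,\dots,m),k)$, and the goal is to show that $P$ is normal. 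The finitely many values $m\le 6$ (and, as it will turn out, $m=10$) can be checked directly with \emph{Polymake} or \emph{Normaliz}, so I assume $m\ge 7$ and $m\neq 10$. The decisive use of the hypothesis is that $i\mid k$ for all $i\le m$ makes $P$ coincide with the entire slice $\{x\in\R_{\ge0}^{m}\colon\sum_i i\,x_i=k\}$; consequently the lattice points of any dilate $sP$ are exactly the vectors $(a_1,\dots,a_m)\in\N^m$ with $\sum_i i\,a_i=sk$, which I think of as multisets $M=\{1^{a_1},\dots,m^{a_m}\}$ of weighted size $sk$. Since a lattice point of $P$ is exactly a multiset of weighted size $k$, normality of $P$ follows by induction on $s$ from the claim: for $s\ge2$, every such $M$ contains a submultiset whose entries sum to exactly $k$.

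To prove the claim I first \emph{compress} $M$: as long as it has two entries $i,j\le\tfrac m2$, remove them and insert $i+j\le m$. A submultiset of the compressed multiset summing to $k$ lifts to one of $M$ by splitting $i+j$ back, the process terminates, and I reach a multiset $M'=\{i_1^{b_{i_1}},\dots,i_t^{b_{i_t}}\}$ with at most one entry $\le\tfrac m2$ and all entries in $\{1,\dots,m\}$. With $k':=\lcm(i_1,\dots,i_t)$, a divisor of $k$, the polytope $P':=P((i_1,\dots,i_t),k')$ has $\dim P'\le t-1$, which one checks is at most $\lceil m/2\rceil$. As $k'\mid k$, the multiset $M'$, still of weighted size $sk$, is a lattice point of $s\cdot\tfrac{k}{k'}P'$; hence it suffices to show that the dilate $\tfrac{k}{k'}P'$ is normal, for then $M'$ is a vector sum of $s$ lattice points of $\tfrac{k}{k'}P'$, each of weighted size $k$, and any one of them is the required submultiset of $M'$.

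Finally I use primes in the window $\bigl(\tfrac m2,m\bigr]$ to force the dilation factor above the dimension. If some prime $p$ with $\tfrac m2<p\le m$ does not divide $k'$, then $p\mid\tfrac{k}{k'}$, so $\tfrac{k}{k'}\ge p\ge\dim P'$, and $\tfrac{k}{k'}P'$ is normal by \cite[Theorem~2.2.12]{CLS11}. Otherwise every prime in $\bigl(\tfrac m2,m\bigr]$ divides $k'$, and here I call on Lemma~\ref{lem:there are enough primes} to pick distinct primes $p_1<p_2$ in that window with $p_1+p_2$ not a power of two. Assuming $b_{p_1}\le b_{p_2}$, I merge all $b_{p_1}$ pairs $\{p_1,p_2\}$ into $p_1+p_2$, obtaining a multiset $M''$ and polytope $P''$ with lcm $k''$. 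Then $p_1$ divides no entry of $M''$: any entry $\le m$ is smaller than $2p_1$, so divisibility by $p_1$ would make it equal $p_1$, of which none are left, while $p_1\nmid p_1+p_2$ since $p_2$ is a prime distinct from $p_1$; thus $p_1\nmid k''$. Conversely, $p_1+p_2<2m$ is even and not a power of two, so every prime power dividing it is $<m$, whence $k''\mid k$. Therefore $p_1\mid\tfrac{k}{k''}$, so $\tfrac{k}{k''}\ge p_1\ge\dim P''$, and $\tfrac{k}{k''}P''$ is normal by \cite[Theorem~2.2.12]{CLS11}; unwinding the merges produces the submultiset of $M'$, and then of $M$, which closes the induction.

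The step I expect to be the main obstacle is the bookkeeping in the last paragraph: the merge must be engineered so that the least common multiple simultaneously \emph{loses} a prime factor exceeding $\tfrac m2$ --- to push the dilation factor above the dimension --- and \emph{gains} no prime power exceeding $m$ --- so that it still divides $k$ and the dilation argument remains available. This is precisely what the non-power-of-two clause in Lemma~\ref{lem:there are enough primes} buys, and it is also the reason the finitely many exceptional values $m\le 6$ and $m=10$, for which no suitable pair of primes exists, have to be handled by separate computation.
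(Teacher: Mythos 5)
Your proposal is correct and follows essentially the same route as the paper's proof: the same reduction to normality of $P((1,\dots,m),\lcm(1,\dots,m))$, the same compression of a lattice point of $sP$ by merging pairs of entries at most $m/2$, the same dilation-factor argument using \cite[Theorem~2.2.12]{CLS11} keyed off a prime in $(m/2,m]$ not dividing the lcm, and the same second merging step (using Lemma~\ref{lem:there are enough primes}) when every such prime already divides the lcm, with the exceptional cases $m\le 6$ and $m=10$ handled computationally. The only visible difference is cosmetic: you spell out the divisibility check $p_1\nmid k''$ a bit more carefully and state the dimension bound as $\lceil m/2\rceil$, which is the sharper form of the paper's $m/2$.
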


\begin{corollary} 
	The map $\p(1^{a_1},\dots,m^{a_m})\to\p^N$ given by all monomials of weighted degree $k$ is an embedding if and only if $i\mid k$ for every $i\in\{1,\dots,m\}$. If so, $\RR_{d,k,m}\simeq\p(1^{a_1},\dots,m^{a_m})$ is embedded as a projectively normal variety.
	\begin{proof} By Theroem \ref{thm: normality of rough Veronese}, the polytope associated to $\RR_{d,k,m}$ is normal and thus very ample. This implies that the map is an embedding and the image is projective normal.
	\end{proof}
\end{corollary}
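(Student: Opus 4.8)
The plan is to deduce the Corollary essentially for free from the preceding material, combining Theorem~\ref{thm: normality of rough Veronese} with the lemmas on weighted projective spaces and the reductions already available. First I would recall the setup: by Theorem~\ref{thm:toricR}, $\RR_{d,k,m}$ is the closure of the image of the rational map $\p(1^{a_1},\dots,m^{a_m})\dashrightarrow\p^{d^k-1}$ given by all monomials of weighted degree $k$, where $a_i$ is the number of Lyndon words of length $i$ in the alphabet $\{1,\dots,d\}$. The statement to prove is an equivalence, so I would handle the two directions separately.

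For the ``if'' direction, suppose $i\mid k$ for every $i\in\{1,\dots,m\}$, equivalently $\lcm(1,\dots,m)\mid k$. Then the lemma on base loci of maps from weighted projective space (taking $l=\lcm(w_1,\dots,w_s)$, here the $w$'s being the weights $1^{a_1},\dots,m^{a_m}$, so that $l=\lcm(1,\dots,m)$ divides $k$) shows the map has empty base locus, hence is a genuine morphism. By Theorem~\ref{thm: normality of rough Veronese} the variety $\RR_{d,k,m}$ is projectively normal, so by Lemma~\ref{lem:easy facts about normality}(2) the associated polytope $P((1^{a_1},\dots,m^{a_m}),k)$ is normal, hence very ample. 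A very ample polytope gives an embedding (Lemma~\ref{lem:easy facts about normality}(1) together with the standard fact that a very ample lattice polytope yields a projectively normal embedding, or more directly: the morphism given by all degree-$k$ monomials from $\p(1^{a_1},\dots,m^{a_m})$ is the embedding associated to the very ample divisor $\mathcal O(k)$). Therefore the map is an embedding and identifies $\RR_{d,k,m}$ with $\p(1^{a_1},\dots,m^{a_m})$, realized as a projectively normal subvariety of $\p^N$.

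For the ``only if'' direction, suppose some $i\in\{1,\dots,m\}$ does not divide $k$. Pick a prime power $q=p^e$ dividing $i$ with $q\nmid k$; then $q\le m$ and $q\nmid k$, so in particular $\lcm(1,\dots,m)\nmid k$. By the base-locus lemma applied with $l=\lcm(1,\dots,m)\nmid k$, the rational map from $\p(1^{a_1},\dots,m^{a_m})$ given by all degree-$k$ monomials has nonempty base locus: indeed, writing $w=q$ for one of the weights with $q\nmid k$, the coordinate point corresponding to that variable is a base point, since every degree-$k$ monomial must use some other variable with positive exponent there. A rational map with nonempty base locus is not a morphism, hence certainly not an embedding, so the map defining $\RR_{d,k,m}$ fails to be an embedding. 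This completes the equivalence.

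The main obstacle here is not in the logic of the Corollary itself, which is a short bookkeeping argument, but in making sure the quoted inputs are applied correctly: namely that Theorem~\ref{thm: normality of rough Veronese} genuinely gives projective normality (equivalently normality of the polytope, not just very ampleness), and that the weights entering the base-locus lemma are the lengths $1,\dots,m$ with multiplicities $a_1,\dots,a_m$ so that their $\lcm$ is $\lcm(1,\dots,m)$. Once these identifications are pinned down, the ``if'' direction is the chain Theorem~\ref{thm: normality of rough Veronese} $\Rightarrow$ normal polytope $\Rightarrow$ very ample $\Rightarrow$ embedding (projectively normal), and the ``only if'' direction is a one-line application of the base-locus criterion.
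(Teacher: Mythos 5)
Your proof is correct and follows essentially the same route as the paper, with the additional merit that you spell out the ``only if'' direction explicitly: the paper's stated proof just invokes Theorem~\ref{thm: normality of rough Veronese} for the ``if'' implication and leaves the converse implicit in the base-locus lemma that immediately precedes the corollary, whereas you actually check that a missing divisibility condition produces a base point at a coordinate point, so the map is not even a morphism. Two very small remarks: the prime-power detour in your ``only if'' argument is unnecessary, since if $i\nmid k$ you can take the coordinate point of a weight-$i$ variable directly as the base point; and in your ``if'' direction the preliminary observation that the base locus is empty is logically subsumed by the very-ampleness conclusion, so it serves only as a sanity check.
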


\subsection{Dimension and Degree of the rough Veronese variety}\label{sub:degRVV}
 
In this section we provide formulas for the dimension and degree of $\RR_{d,k,m}$. We work under the assumption that $k\geq m$, as otherwise some variables do not appear at all in the parametrization and hence we can easily reduce to this case. Recall that the \emph{normalized volume} of a polytope $P\subset\R^n$, denoted by $\vol P$, is $n!$ times its Lebesgue measure.

\begin{proposition}\label{pro: dim and deg of rough veronese} Let $d,k,m\in\N$ with $d\ge 2$ and $k\ge m$.
	Let $W_{d,m}$ be the set of Lyndon words of length at most $m$ in the alphabet with $d$ letters.
	\begin{enumerate}
		\item The dimension of $\RR_{d,k,m}$ is $\# (W_{d,m})-1$.
		\item Let $l=\lcm(1,2,\dots,m)$ and let $\Delta$ be the convex hull of the integral points
		$$\left(\frac{l}{1},0,\dots,0\right),\left( 0,\frac{l}{1},0,\dots,0\right),\dots,\left(0,\dots,0,\frac{l}{m}\right),$$
		where the number of occurrences of $\frac{l}{i}$ is the number of Lyndon words of length $i$. Then
		\begin{equation}\label{equat:degree of rough Veronese}
		\deg\RR_{d,k,m}\le\vol\left( \frac{k}{l}\Delta\right)
		\end{equation}
		and
		\[\lim_{k\rightarrow\infty}\frac{\deg \RR_{d,k,m}}{ \vol \left(\frac{k}{l}\Delta\right) }=1.\]
		Moreover, equality holds in (\ref{equat:degree of rough Veronese}) if and only if $l|k$.
	\end{enumerate}
\end{proposition}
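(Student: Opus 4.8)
The plan is to prove the three claims of Proposition \ref{pro: dim and deg of rough veronese} in order, using throughout the identification of $\RR_{d,k,m}$ with the toric variety of Theorem \ref{thm:toricR}. For part (1), since $\RR_{d,k,m}$ is the closure of the image of the monomial map $\Spec S_{d,m}\to\Spec R_{d,k}$ given by all monomials of weighted degree $k$, and since the $x_w$ for $w\in W_{d,m}$ are algebraically independent, the dimension of the affine cone over $\RR_{d,k,m}$ equals the rank of the lattice generated by the exponent vectors of these monomials. When $k\ge m$, every variable $x_w$ appears in at least one monomial of degree $k$ (because one can pad a single copy of $x_w$ with copies of a weight-one variable up to total weight $k$), and the exponent vectors of these monomials together with their differences span a full-rank sublattice of $\Z^{\#W_{d,m}}$. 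Hence the projective variety has dimension $\#(W_{d,m})-1$. I would spell this out by exhibiting, for each $w$, two monomials of degree $k$ differing only in the $x_w$-coordinate, which gives all standard basis differences $e_w - e_{w'}$, and combined with one monomial of total degree summing suitably this yields the claim.

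For part (2), the key observation is that $\frac{k}{l}\Delta$ is exactly the polytope obtained from $P((1^{a_1},\dots,m^{a_m}),k)$ after the linear change of coordinates $t_i\mapsto t_i$ rescaled so that the defining hyperplane $\sum w_i t_i = k$ becomes the standard simplex slice; more precisely, the lattice polytope $P$ associated to $\RR_{d,k,m}$ sits inside the hyperplane $\{\sum_i w_i t_i = k\}$ and its image under the natural affine projection forgetting one coordinate is a lattice polytope whose normalized volume is the degree of the projectively normal model. Concretely, $\Delta$ is the simplex with vertices $\frac{l}{i}e_i$ (with multiplicities), which is the polytope of the weighted projective space $\p(1^{a_1},\dots,m^{a_m})$ with its $\mathcal{O}(l)$ polarization, and $\frac{k}{l}\Delta$ is its $\mathcal{O}(k)$ polarization. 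By the standard fact that the degree of a (possibly non-normal) projective toric variety is at most the normalized volume of its polytope, with equality precisely when the polytope is normal (equivalently when the monoid generated by lattice points at height $k$ equals the monoid of all lattice points at heights that are multiples of $k$), we get inequality \eqref{equat:degree of rough Veronese}. Equality when $l\mid k$ follows from the Corollary after Theorem \ref{thm: normality of rough Veronese}: when $i\mid k$ for all $i\le m$ the map is an embedding of a projectively normal variety, so the polytope is normal and the degree equals the normalized volume. For the converse — that equality forces $l\mid k$ — I would invoke the same circle of ideas: if $l\nmid k$ then $\mathcal{O}(k)$ is not very ample (by the Lemma on base loci combined with the analysis in Theorem \ref{thm: normality of rough Veronese} and its Corollary), the polytope is not normal, and the number of lattice points fails to match in high enough degree, forcing a strict inequality in the Hilbert-polynomial leading term.

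For the asymptotic statement $\lim_{k\to\infty}\deg\RR_{d,k,m}/\vol(\frac{k}{l}\Delta)=1$, the idea is that the degree of the toric variety equals $\dim(X)!$ times the leading coefficient of the Hilbert polynomial, which counts lattice points in dilations of the polytope $P$ of $\RR_{d,k,m}$; this count is asymptotically the Lebesgue volume of $P$ regardless of normality, so the leading terms agree. More carefully: the Hilbert function of $\RR_{d,k,m}$ in degree $s$ is the number of lattice points in the $s$-th dilate of $P$ that lie in the monoid generated by $P\cap\Z^n$, and this differs from $|sP\cap\Z^n|$ only in lower-order terms (the saturation is a finite-index operation affecting only the boundary contributions), while $|sP\cap\Z^n|\sim \operatorname{vol}_{\mathrm{Leb}}(P)s^{\dim P}$. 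Comparing with $\vol(\frac{k}{l}\Delta)=(\dim\Delta)!\operatorname{vol}_{\mathrm{Leb}}(\frac{k}{l}\Delta)$ and noting $\dim P=\dim\Delta=\#W_{d,m}-1$ and that $P$ and $\frac{k}{l}\Delta$ are affinely equivalent as real polytopes (both being height-$k$ slices of the same cone), the limit is $1$.

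The main obstacle I expect is the precise bookkeeping in part (2): correctly matching the polytope $P((1^{a_1},\dots,m^{a_m}),k)$ living in an affine hyperplane of $\R^{\#W_{d,m}}$ with the full-dimensional simplex $\frac{k}{l}\Delta$, keeping track of the lattice and the normalization factor so that normalized volume really equals the degree, and cleanly deducing the \emph{only if} direction of the equality criterion (showing $l\nmid k$ forces strict inequality) — this requires knowing that the failure of very ampleness detected in Theorem \ref{thm: normality of rough Veronese} actually manifests as a gap in the lattice-point count of some dilate and not merely as a failure of embedding, so one must argue that non-normality of the polytope strictly lowers the normalized-volume-leading term of the Hilbert polynomial of the \emph{image}. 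The other two parts are essentially routine once the toric dictionary is set up.
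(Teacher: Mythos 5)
Your part (1) is essentially the paper's argument: show that the lattice points defining the monomial map span a full-rank sublattice, hence the polytope has full dimension. The paper is a bit more careful about which lattice is generated (namely the sublattice of $\Z^{\#W_{d,m}}$ where the weighted sum is divisible by $k$), and your phrasing "standard basis differences $e_w-e_{w'}$" is imprecise since such differences only lie in the relevant lattice when $w,w'$ have the same weight; but these are cosmetic and the idea matches.

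Part (2) contains a genuine error. You write that "the degree of a (possibly non-normal) projective toric variety is at most the normalized volume of its polytope, with equality precisely when the polytope is normal." This is not correct: the degree of the projective toric variety associated to a finite set $A$ of lattice points \emph{always} equals the normalized volume of $\conv(A)$ (normalized with respect to the lattice generated by the differences of points in $A$), independent of any normality or very-ampleness hypothesis. Normality controls projective normality and whether the Hilbert function equals the Ehrhart function, not the degree. Consequently the inequality~\eqref{equat:degree of rough Veronese} cannot be attributed to a failure of normality. The actual mechanism is simpler and purely set-theoretic: the polytope $P=P((1^{a_1},\dots,m^{a_m}),k)$ of $\RR_{d,k,m}$ is the convex hull of the lattice points in the rational simplex $\frac{k}{l}\Delta$, so $P\subseteq\frac{k}{l}\Delta$ as convex bodies and hence $\deg\RR_{d,k,m}=\vol P\le\vol\frac{k}{l}\Delta$. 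Equality holds iff $P=\frac{k}{l}\Delta$, which (since $P$ is a lattice polytope and is the convex hull of the lattice points of $\frac{k}{l}\Delta$) happens exactly when $\frac{k}{l}\Delta$ is itself a lattice polytope, i.e.\ when $l\mid k$. Your claim that "$P$ and $\frac{k}{l}\Delta$ are affinely equivalent as real polytopes (both being height-$k$ slices of the same cone)" is false precisely when $l\nmid k$: $\frac{k}{l}\Delta$ is the height-$k$ slice of the cone, but $P$ is the convex hull of the lattice points in that slice and is strictly smaller. This invalidates your asymptotic argument as written. The paper instead observes that translating by the lattice vector $(1,0,\dots,0)$ (weight one) embeds $P_k$ isometrically into $P_{k+1}$, which gives the squeeze
\[\vol\left(\left\lfloor\tfrac{k}{l}\right\rfloor\Delta\right)\le\vol P_k\le\vol\left(\left\lceil\tfrac{k}{l}\right\rceil\Delta\right),\]
and the limit follows immediately. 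You would need to replace the appeal to normality by this containment/monotonicity argument for the proof to be valid.
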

The dimension of $\RR_{d,k,m}$ was already computed in \cite[Remark 6.5]{AFS18} by proving that the fibers of the map $f_{d,k,m}$ have dimension 0. However, in our opinion it is interesting to have a different proof based on toric techniques.
\begin{proof} Let $a_i$ be the number of Lyndon words of length $i$. We know that $\RR_{d,k,m}$ is the toric variety associated to the polytope
	$$P:=P((1^{a_1},\dots, m^{a_m}),k).$$
\begin{enumerate}
	\item Note that $\dim\RR_{d,k,m}\le\dim\Lie^m(\R^d)-1=\# (W_{d,m})-1$, as this is the dimension of the parameterizing weighted projective space. Thus, in order to prove the statement we only have to show that $P$ has maximal possible dimension. In fact, we prove that the lattice points of $P$ generate the lattice
	$$L=\left\{(x_{1,1},\dots,x_{1,a_1},x_{2,1},\dots,x_{m,a_m})\in\Z^{\#W_{d,m}}\mid k\mbox{ divides }\sum_{i=1}^m\sum_{j=1}^{a_i}ix_{i,j}\right\}.$$
	If $i\in\{1,\ldots,m\}$, $j\in\{1,\ldots,a_i\}$ and $(i,j)\neq (1,1)$, then there exists a lattice point $p\in P$ with $x_{i,j}=1$ and $x_{i',j'}=0$, unless $(i',j')=(i,j)$ or $(i',j')=(1,1)$. Take any $z\in L$. By adding and subtracting $p$ of the above type, we can reduce to the situation when $z$ has only the first coordinate nonzero. In such a case, the claim is obvious.
	\item We note that $P$ is the convex hull of lattice points in $\frac{k}{l}\Delta$. Thus,
	\[\deg\RR_{d,k,m}=\vol P\leq \vol \frac{k}{l}\Delta.\]
	Now, equality holds if and only if $P= \frac{k}{l}\Delta$. However, the latter is a lattice polytope if and only if $l|k$. Finally, note that $P((1^{a_1},\dots, m^{a_m}),k)$ is injected into $P((1^{a_1},\dots, m^{a_m}),k+1)$ simply by adding $(1,0,\dots,0)$. Hence
	$$\vol \left\lfloor\frac{k}{l}\right\rfloor\Delta\leq \vol P=\deg \RR_{d,k,m}\leq \vol \left\lceil\frac{k}{l}\right\rceil\Delta, $$
	which finishes the proof.\qedhere
\end{enumerate}\end{proof}

Many software-aided computations on the dimension, the degree and even the generators of the ideal of $\RR_{d,k,m}$ are presented in \cite{G18}. We now want to use Proposition \ref{pro: dim and deg of rough veronese} to deal with this numbers by using toric geometry.

\begin{example} Fix $d=m=2$. Since $W_{2,2}=\{1,2,12\}$, $\dim\RR_{2,k,2}=2$. We now compute the degree for small values of $k$. Since $\RR_{2,k,2}$ is a surface, we will deal with 2-dimensional polytopes. Here, $l=\lcm(1,2)=2$ and we denote $P:=P((1^2,2),k)$.
	\begin{enumerate}
		\item[\emph{($k=2$)}] As in Notation \ref{not:SR}, $R_{2,2}=\C[x^2,xy,y^2,a]$ and $P=\Delta$ is the convex hull of the points $(2,0,0),(0,2,0),(0,0,1)$. It is a triangle
		\[\begin{tikzpicture}
%		\draw[step=1cm,gray,very thin] (-0.1,-0.1) grid (2.1,1.1);
		\filldraw[fill=black!15!white, draw=black] (0,0) -- (2,0) -- (1,1) -- cycle;
		\draw (1,0) -- (1,1);
		\fill[black] (0,0) circle (0.06cm);
		\fill[black] (1,0) circle (0.06cm);
		\fill[black] (2,0) circle (0.06cm);
		\fill[black] (1,1) circle (0.06cm);
		\end{tikzpicture}\]
		of normalized area 2, hence $\deg \RR_{2,2,2}=2$. As \cite[Example 15]{G18} shows, $\RR_{2,2,2}$ is the cone over a smooth conic in $\p^3$.
		\item[\emph{($k=3$)}] Since $R_{2,3}=\C[x^3,x^2y,xy^2,y^3,xa,ya]$, $\frac{3}{2}\Delta=\conv\left\lbrace (3,0,0),(0,3,0),(0,0,\frac{3}{2})\right\rbrace $, while $P=\conv\{(3,0,0),(0,3,0),(1,0,1),(0,1,1)\}$.
		\[\begin{tikzpicture}
		\draw (0,0) -- (3,0) -- (1.5,1.5) -- cycle;
		\filldraw[fill=black!15!white, draw=black] (0,0) -- (3,0) -- (2,1) -- (1,1) -- cycle;
		\draw (1,1) -- (1,0) -- (2,1) -- (2,0);
		\fill[black] (0,0) circle (0.06cm);
		\fill[black] (1,0) circle (0.06cm);
		\fill[black] (2,0) circle (0.06cm);
		\fill[black] (3,0) circle (0.06cm);
		\fill[black] (1,1) circle (0.06cm);
		\fill[black] (2,1) circle (0.06cm);
		\end{tikzpicture}\]
		Therefore, $\deg\RR_{2,3,2}=\vol P=4<\frac{9}{2}=\vol \frac{3}{2}\Delta$.
		\item[\emph{($k=4$)}] In this case,  $R_{2,4}=\C[x^4,x^3y,x^2y^2,xy^3,y^4,x^2a,xya,y^2a,a^2]$, and so, $P=2\Delta=\conv\{(4,0,0),(0,4,0),(0,0,2)\}$ and $\deg \RR_{2,4,2}=\vol P=8$.
		\[\begin{tikzpicture}
		\filldraw[fill=black!15!white, draw=black] (0,0) -- (4,0) -- (2,2) -- cycle;
		\draw (3,1) -- (1,1) -- (1,0) -- (2,1) -- (2,0) -- (3,1) -- (3,0);
		\draw (2,1) -- (2,2);
		\fill[black] (0,0) circle (0.06cm);
		\fill[black] (1,0) circle (0.06cm);
		\fill[black] (2,0) circle (0.06cm);
		\fill[black] (3,0) circle (0.06cm);
		\fill[black] (4,0) circle (0.06cm);
		\fill[black] (1,1) circle (0.06cm);
		\fill[black] (2,1) circle (0.06cm);
		\fill[black] (3,1) circle (0.06cm);
		\fill[black] (2,2) circle (0.06cm);
		\end{tikzpicture}\]
		
		\item[\emph{($k=5$)}] As in the case $k=3$,  $P\subsetneq\frac{5}{2}\Delta$. More precisely, $\frac{5}{2}\Delta=\conv\left\lbrace (5,0,0),(0,5,0),(0,0,\frac{5}{2})\right\rbrace $, and $P=\conv\{(5,0,0),(0,5,0),(1,0,2),(0,1,2)\}$.
		\[\begin{tikzpicture}
		\draw (0,0) -- (5,0) -- (2.5,2.5) -- cycle;
		\filldraw[fill=black!15!white, draw=black] (0,0) -- (5,0) -- (3,2) -- (2,2) -- cycle;
		\draw (1,1) -- (1,0) -- (2,1) -- (2,0)  -- (3,1) -- (3,0) -- (4,1) -- (4,0);
		\draw (1,1) -- (4,1);
		\draw (2,2) -- (2,1) -- (3,2) -- (3,1);
		\fill[black] (0,0) circle (0.06cm);
		\fill[black] (1,0) circle (0.06cm);
		\fill[black] (2,0) circle (0.06cm);
		\fill[black] (3,0) circle (0.06cm);
		\fill[black] (4,0) circle (0.06cm);
		\fill[black] (5,0) circle (0.06cm);
		\fill[black] (1,1) circle (0.06cm);
		\fill[black] (2,1) circle (0.06cm);
		\fill[black] (3,1) circle (0.06cm);
		\fill[black] (4,1) circle (0.06cm);
		\fill[black] (2,2) circle (0.06cm);
		\fill[black] (3,2) circle (0.06cm);
		\end{tikzpicture}\]
		Hence, $\deg\RR_{2,5,2}=\vol P=12<\frac{25}{2}=\vol \frac{5}{2}\Delta$.
	\end{enumerate}
In this way, it is straightforward to see that, up to a linear isometry, the simplex $\frac{k}{2}\Delta$ is the triangle with vertices $(k,0,0),(0,k,0)$ and $\left( 0,0,\frac{k}{2}\right)$. If $k$ is even, then they are lattice points, $P=\frac{k}{2}\Delta$, and
\[\deg\RR_{2,k,2}=\vol P=\vol \frac{k}{2}\Delta=\frac{k^2}{2}.\]
If $k$ is odd, then $P$ is the trapezium with vertices
$(k,0,0),(0,k,0),\left( 1,0,\frac{k-1}{2}\right)$ and $\left(0 ,1,\frac{k-1}{2}\right)$, implying that
\[\deg\RR_{2,k,2}=\vol P=\frac{k^2-1}{2}<\frac{k^2}{2}=\vol \frac{k}{2}\Delta.\]
\end{example}

\section{Axis-parallel paths}\label{sec:axis parallel}
Besides $\RR_{d,k,m}$, the universal variety contains other interesting subvarieties. One of them is the signature variety $\LL_{d,k,m}$ of piecewise linear paths. Up to translation, a piecewise linear path $X$ with $m$ steps is the concatenation of $m$ linear paths, each represented by a vector $v_i$. This decomposition is unique, provided that $v_{i+1}$ is not a multiple of $v_i$ for any $i$. In this section, we study the subfamily of \emph{axis-parallel paths}.  
\begin{definition} Let $\{e_1,\dots, e_d\}$ be the standard basis for $\mathbb{R}^d$. A piecewise linear path $X=v_1\sqcup\ldots\sqcup v_m$ is an axis-parallel path (or simply an axis path) if there are $a_1,\dots,a_m\in\R$ such that $v_i=a_ie_{\nu_i}$ for every $i$, where $\nu_i\in \{1,\dots,d\}$.
\end{definition}
In other words, each step is a multiple of a basis vector. Therefore, an axis-parallel path is characterized by two sequences. One of them is the sequence $\nu=(\nu_1,\dots,\nu_m)$, called \emph{shape} of $X$, that stores in each $\nu_i\in \{1,2,\dots,d\}$ the direction of the $i$-th step. The other sequence, $a =(a_1,\dots,a_m)\in\R^m$, stores the length of each step and we call it the \emph{sequence of lengths}. Notice that $\ell(\nu)=\ell(a)=m$.
When we study an axis-parallel path $X\colon [0,1]\to\R^d$, we may assume that the image is not contained in any hyperplane, that is, it is nondegenerate. This means that $\{\nu_1,\dots,\nu_m\}=\{1,\dots, d\}$. %Moreover, our assumption that each $v_{i+1}$ is not a multiple of $v_i$ guarantees that $\nu$ has no consecutive repeated entries.

The $k$-th signature of an axis-parallel path $X$ can be computed combinatorially in a very nice way. Recall that a \emph{partition} of a set $S$ is a collection of subsets (called blocks) such that their union is $S$. Now each sequence $\nu$ induces a partition $\pi_\nu=\{\pi_1|\pi_2|\dots|\pi_d\}$ of the set $\{1,\dots, m\}$, defined by $(\pi_\nu)_i=\{j\in\{1,\dots,m\}\mid \nu_j=i\}$.
For instance, if $\nu=(1,2,1,3,3,1)$ then $\pi_\nu = \{1,3,6|2|4,5\}$. We will write $\pi$ instead of $\pi_\nu$ when there are no ambiguities.
%\begin{remark}A set partition of $S$ is a collection of disjoint subsets, called blocks, of $S$ whose union is $S$. The number of set partitions of the set $\{1,\dots,n\}$ is the Bell number $B_n$. \end{remark}
We can now introduce the main character of this section.
\begin{definition}
	Fix $\nu=(\nu_1,\dots,\nu_m)$ and $k\in\N$. Let $d=\max(\nu_1,\ldots,\nu_m)$. For an axis parallel path $X=a_1e_{\nu_1}\sqcup\ldots\sqcup a_me_{\nu_m}$, let $g_{\nu,k}(X):=\sigma^{(k)}(X)$ be its $k$-th signature. As we did in Definition \ref{def:rough veronese}, we pass to the complex projective space and we define the \emph{axis paths variety} $\mathcal{A}_{\nu,k}$ to be closure of the image of the composition \[\R^m\xrightarrow{g_{\nu,k}}(\R^d)^{\otimes k}\rightarrow(\R^d)^{\otimes k}\otimes\C=(\C^d)^{\otimes k}\dashrightarrow\p^{d^k-1}.\]
	Instead of $g_{\nu,k}$, we can also consider 
	\[G_{\nu,k}:\R^m\to\R^d\times (\R^d)^{\otimes 2}\times\ldots\times (\R^d)^{\otimes k}\] by sending $a\mapsto (g_{\nu,1}(a),\dots,g_{\nu,k}(a))$. In this case we denote by $\AAA_{\nu,\le k}\subset \C^d\times(\C^d)^{\otimes 2}\times\ldots\times (\C^d)^{\otimes k}$ the closure of the complexification of the image of $G_{\nu,k}$.
\end{definition}

There is a nice way to write down the polynomials defining the map $g_{\nu,k}$. The following result is a consequence of \cite[Corollary 5.3]{AFS18}.
\begin{lemma}\label{lem:CombDescription}
Let $X$ be the axis-parallel path of shape $\nu$ and sequence of lengths $a$. Then the $(i_1\dots i_k)$-th entry of the $k$-th signature is
\begin{eqnarray*}
\sigma(X)_{i_1\dots i_k} = \sum_{(j_1,\dots,j_k)} \frac{1}{s_1!s_2!\cdots s_m!} a_{j_1}a_{j_2}\cdots a_{j_k},
\end{eqnarray*}
where we sum over all the non-decreasing sequences $(j_1,j_2,\dots,j_k)$ such that $j_l\in \pi_{i_l}$ for $l\in\{1,\dots, k\}$, and $s_l$ counts the number of times that $l$ appears in $(j_1,j_2,\dots,j_k)$.
\end{lemma}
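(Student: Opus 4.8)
The plan is to derive the formula directly from the definition of the $k$-th signature by computing the iterated integral for an axis-parallel path. First I would recall that, by Chen's identity (Lemma \ref{lem:Chen's identity}), the signature of the concatenation $X=a_1e_{\nu_1}\sqcup\ldots\sqcup a_me_{\nu_m}$ is the tensor product $\sigma(a_1e_{\nu_1})\otimes\cdots\otimes\sigma(a_me_{\nu_m})$. For a single linear step $a_je_{\nu_j}$ the signature is elementary: its degree-$r$ part has $(\underbrace{\nu_j\ldots\nu_j}_{r})$-entry equal to $a_j^r/r!$ and all other entries zero (this is the exponential $\exp(a_je_{\nu_j})$ in the tensor algebra). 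So the $(i_1\dots i_k)$-entry of $\sigma(X)$ is obtained by expanding the tensor product: one distributes the $k$ "slots" $1,\dots,k$ (carrying the letters $i_1,\dots,i_k$) among the $m$ factors in an order-preserving way, and the slots assigned to factor $l$ must all carry the letter $\nu_l$.

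Next I would make this bookkeeping precise. Assigning slot $l$ to factor $j_l$ is exactly the data of a non-decreasing sequence $(j_1,\dots,j_k)$; the constraint that the letter in slot $l$ matches the direction of factor $j_l$ is precisely $i_l=\nu_{j_l}$, i.e. $j_l\in\pi_{i_l}$ in the partition notation. If factor $l$ receives $s_l$ slots, then its contribution to that monomial is $a_l^{s_l}/s_l!$, and the product of the lengths appearing is $a_{j_1}\cdots a_{j_k}=\prod_l a_l^{s_l}$, while the product of the factorials gives the coefficient $1/(s_1!\cdots s_m!)$. Summing over all admissible non-decreasing sequences yields exactly the claimed expression. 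I would note that one should double-check the order-preserving condition translates to "non-decreasing" rather than "strictly increasing": because within a single factor the iterated integral of $a_j\,\mathrm{d}t$ against itself $s$ times produces $a_j^s/s!$, repeated indices within one block are allowed, which is why the sequence is merely non-decreasing and the $s_l!$ corrections appear.

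Alternatively, and perhaps more cleanly for the write-up, I would invoke \cite[Corollary 5.3]{AFS18} as cited: that result already expresses the signature of a piecewise linear path $v_1\sqcup\cdots\sqcup v_m$ as a sum over weak compositions of $k$ into $m$ parts of terms $\frac{1}{s_1!\cdots s_m!}v_1^{\otimes s_1}\otimes\cdots\otimes v_m^{\otimes s_m}$. Specializing $v_l=a_le_{\nu_l}$, the tensor $e_{\nu_1}^{\otimes s_1}\otimes\cdots\otimes e_{\nu_m}^{\otimes s_m}$ has a single nonzero entry, indexed by the word $\nu_1^{s_1}\nu_2^{s_2}\cdots\nu_m^{s_m}$; matching this word to $i_1\dots i_k$ is the same as choosing the non-decreasing sequence $(j_1,\dots,j_k)$ with $j_l\in\pi_{i_l}$. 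This makes the lemma essentially a translation of the known formula into the partition language, and I would organize the proof as: (i) state the piecewise-linear signature formula, (ii) substitute axis directions, (iii) identify weak compositions compatible with the index word $i_1\dots i_k$ with non-decreasing sequences $(j_1,\dots,j_k)$ satisfying the membership constraints.

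The main obstacle is purely notational bookkeeping: carefully verifying the bijection between the indexing sets — weak compositions $(s_1,\dots,s_m)$ of $k$ together with a compatible assignment of the word $i_1\dots i_k$, versus non-decreasing sequences $(j_1,\dots,j_k)$ with $j_l\in\pi_{i_l}$ — and checking that this bijection is weight-preserving (both the monomial $a_{j_1}\cdots a_{j_k}=\prod a_l^{s_l}$ and the rational coefficient $1/\prod s_l!$ agree). There is no real analytic difficulty once Chen's identity and the single-step signature are in hand; the care is entirely in aligning the two combinatorial descriptions and in confirming the "non-decreasing, not strictly increasing" point so that the multinomial-type denominators come out correctly.
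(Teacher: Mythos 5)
Your proposal is correct, and it aligns with the paper's treatment: the paper gives no argument at all beyond citing \cite[Corollary 5.3]{AFS18}, which is precisely your second route. Your first route — Chen's identity applied to the concatenation $a_1e_{\nu_1}\sqcup\cdots\sqcup a_me_{\nu_m}$, the observation that $\sigma(a_je_{\nu_j})=\exp(a_je_{\nu_j})$ has a single nonzero entry in each degree, and the bijection between degree vectors $(s_1,\dots,s_m)$ and non-decreasing sequences $(j_1,\dots,j_k)$ with $j_l\in\pi_{i_l}$ — is a clean self-contained derivation of the same fact and would be a reasonable thing to include if one wanted the paper not to lean on AFS18 here; the bookkeeping you flag (non-decreasing vs.\ strictly increasing, and the $1/s_l!$ from the simplex volume within a single step) is exactly where the multinomial denominators come from, and you have it right.
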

Lemma \ref{lem:CombDescription} allows us to find the $k$-th signature of an axis-parallel path and to explicitly write the map $g_{\nu,k}$. We implemented a Macaulay2 code to compute the ideal of $\AAA_{\nu,k}$. This code can be found in \cite{CGM19}.
\begin{example}
Let $\sigma$ be the 4-th signature of an axis path of shape $\nu= (1,2,1,3,2,3,1,4)$ and sequence of lengths $a=(a_1,\dots, a_8)$. Then
$\pi_\nu = \{1,3,7|2,5|4,6|8\}$. By Lemma \ref{lem:CombDescription},
\begin{eqnarray*}
\sigma_{1234} &=& a_1a_2a_4a_8 + a_1a_2a_6a_8 + a_1a_5a_6a_8 + 
a_3a_5a_6a_8, \\
\sigma_{2314} &=& a_2a_4a_7a_8 + a_2a_6a_7a_8 + a_5a_6a_7a_8, \\
\sigma_{4123} &=& 0, \\
\sigma_{1124} &=& \frac{1}{2}a_1^2a_2a_8 + \frac{1}{2}a_1^2a_5a_8 + a_1a_3a_5a_8 + \frac{1}{2}a_3^2a_5a_8.
\end{eqnarray*}
\end{example}
%Now, fix a sequence $\nu$ and a nonnegative integer $k$, and so $d=\max(\nu_,\ldots,\nu_m)$ and $m=\ell(\nu)$. We look at the variety given by the $k$-th signature of the axis-parallel paths of shape $\nu$. whose entrywise description is given by Lemma \ref{lem:CombDescription}. 

\begin{remark}\label{rmk:fill the universal}
	As any signature variety, the axis paths variety is contained in the universal variety. Namely, $\AAA_{\nu,k}\subset\U_{d,k}$ for every $d\geq \max(\nu_1,\ldots,\nu_m)$. It is known that $\LL_{d,k,m}$ coincides with $\U_{d,k}$ for $m$ large enough, because every piecewise smooth path can be approximated by using piecewise linear paths. Since every piecewise linear path can be approximated by an axis path, we deduce that for every $d$ and $k$, there exists $\nu$ such that $\max(\nu_i)=d$ and $\AAA_{\nu,k}=\U_{d,k}$.
\end{remark}

It is worth to write down what Chen's identity means for the axis paths signature variety.

\begin{proposition}\label{cor:Chen for axis paths}
	Consider two sequences $\nu_1$ and $\nu_2$. If $X_1$ and $X_2$ are axis-parallel paths of shape $\nu_1$ and $\nu_2$ respectively, then
 	\[g_{\nu_1\nu_2,k}(X_1\sqcup X_2)=\sum_{a+b= k}g_{\nu_1,a}(X_1)\otimes g_{\nu_2,b}(X_2).\]
	In particular, $\AAA_{\nu_1\nu_2,\le k}$ is a projection of the Segre product $\AAA_{\nu_1,\le k}\times\AAA_{\nu_2,\le k}$.	
\end{proposition}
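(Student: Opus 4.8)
The plan is to derive both assertions directly from Chen's identity (Lemma~\ref{lem:Chen's identity}) together with the combinatorial description of axis-path signatures. First I would recall that for a concatenation $X_1\sqcup X_2$ Chen's identity gives $\sigma(X_1\sqcup X_2)=\sigma(X_1)\otimes\sigma(X_2)$ as formal power series in $T((\R^d))$. Extracting the order-$k$ graded piece of this identity and using that the tensor product of a grading-$a$ element with a grading-$b$ element sits in grading $a+b$, one obtains
\[
\sigma^{(k)}(X_1\sqcup X_2)=\sum_{a+b=k}\sigma^{(a)}(X_1)\otimes\sigma^{(b)}(X_2),
\]
which is exactly the claimed formula for $g_{\nu_1\nu_2,k}(X_1\sqcup X_2)$ once we note that the shape of $X_1\sqcup X_2$ is the concatenation $\nu_1\nu_2$ of the two shapes, and that $g_{\nu,a}(X)=\sigma^{(a)}(X)$ by definition. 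A small point to check here is that Chen's identity is stated for piecewise smooth paths, and axis-parallel paths are piecewise linear, hence piecewise smooth, so the lemma applies verbatim; I would remark on this explicitly.

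For the second statement, I would unwind the formula to see it as a projection of a Segre product. Consider the map $G_{\nu_1,\le k}\times G_{\nu_2,\le k}\colon \R^{m_1}\times\R^{m_2}\to \big(\prod_{a=1}^k(\R^d)^{\otimes a}\big)\times\big(\prod_{b=1}^k(\R^d)^{\otimes b}\big)$, whose (closure of the) image is $\AAA_{\nu_1,\le k}\times\AAA_{\nu_2,\le k}$. The Segre embedding records all pairwise tensor products $\sigma^{(a)}(X_1)\otimes\sigma^{(b)}(X_2)$ for $1\le a,b\le k$ (together with the degree-zero term, which is the constant $1$). The right-hand side of the displayed formula is a linear combination — in fact just the sum over the antidiagonals $a+b=c$ for each $c\le k$ — of precisely these coordinates. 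Hence the assignment that sends the Segre point $(\sigma^{(a)}(X_1)\otimes\sigma^{(b)}(X_2))_{a,b}$ to $(\sum_{a+b=c}\sigma^{(a)}(X_1)\otimes\sigma^{(b)}(X_2))_{1\le c\le k}$ is the restriction of a linear projection of the ambient tensor space, and by the first part its image is exactly (the affine cone over) $\AAA_{\nu_1\nu_2,\le k}$. Taking Zariski closures and passing to projectivizations commutes with linear projections, so $\AAA_{\nu_1\nu_2,\le k}$ is the image of $\AAA_{\nu_1,\le k}\times\AAA_{\nu_2,\le k}$ under this projection.

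The genuinely substantive step is the first one — extracting the graded pieces of Chen's identity and matching indices correctly — but this is essentially bookkeeping: once one writes $\sigma(X)=\sum_k\sigma^{(k)}(X)$ and expands the tensor product of two such series, the grading does all the work. The main thing to be careful about is the role of the degree-zero terms $\sigma^{(0)}=1$: they contribute the $a=0$ and $b=0$ summands, so in the statement as written (with $g_{\nu,0}$ implicitly equal to $1$) the sum $a+b=k$ should be read as ranging over $a,b\ge 0$; I would either adopt that convention or note that for $k\ge 1$ the boundary terms recover $g_{\nu_1,k}(X_1)$ and $g_{\nu_2,k}(X_2)$ respectively. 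A final minor point is that the Segre \emph{product} (as opposed to the full Segre of the two projective spaces) already builds in the compatibility of the two degree-zero normalizations, so no further identification is needed; the projection is then simply forgetting all coordinates except the antidiagonal sums. I do not expect any serious obstacle beyond keeping the indexing straight.
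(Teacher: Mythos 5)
Your proof is correct and takes the same approach the paper implicitly uses — the paper prefaces the proposition with ``It is worth to write down what Chen's identity means for the axis paths signature variety'' and gives no further proof, so your derivation (extracting the degree-$k$ graded piece of Chen's identity, then reading off the Segre-projection description) is precisely the expected argument, with the bookkeeping about $\sigma^{(0)}=1$ and the index range $a,b\ge 0$ handled properly.
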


In Section \ref{sec:rough veronese} we saw how important it is for a variety to be toric, so it is natural to ask whether $\AAA_{\nu,k}$ enjoys such property.

\subsection{Toricness of $\AAA_{\nu,k}$}\label{sub: axis toricness}
It is not difficult to find instances when the axis paths variety is toric. As an example we can take $\mathcal{A}_{\nu,1} = \mathbb{R}^{d}$, or consider $\nu=(1,2,\dots,d)$ and obtain the Veronese variety $\V_{d,k}=\AAA_{\nu,k}$. A less trivial example is given by Remark \ref{rmk:fill the universal}. Thanks to Lemma \ref{lem:CombDescription}, we are able to perform efficient computations and check that many other occurrences of $\AAA_{\nu,k}$ are toric, and even find their polytopes.

\begin{example}
	In \cite{CGM19} we compute the ideal of $\AAA_{(1,2,1),3}$ and we check that it is toric. Further computations on the ideal allow us to determine its polytope $P$. Indeed, $\AAA_{(1,2,1),3}$ is a degree 6 surface in $\p^6$, so $P$ is a two-dimensional lattice polytope of normalized area 6 that contains exactly 7 lattice points. By checking the Hilbert and Erhart polynomials and the Betti table, we see that there are only two possible polytopes.
	\begin{center}\scalebox{0.7}{
			\begin{tikzpicture}
			\draw (0,0) -- (2,0) -- (2,1) -- (0,2) -- (0,0);
			\node at (0,0) {$\bullet$};
			\node at (1,0) {$\bullet$};
			\node at (2,0) {$\bullet$};
			\node at (2,1) {$\bullet$};
			\node at (0,1) {$\bullet$};
			\node at (0,2) {$\bullet$};
			\node at (1,1) {$\circ$};
			\end{tikzpicture} \hspace{2cm}
			\begin{tikzpicture}
			\draw (0,0) -- (3,0) -- (0,2) -- (0,0);
			\node at (0,0) {$\bullet$};
			\node at (1,0) {$\bullet$};
			\node at (2,0) {$\bullet$};
			\node at (3,0) {$\bullet$};
			\node at (0,1) {$\bullet$};
			\node at (0,2) {$\bullet$};
			\node at (1,1) {$\circ$};
			\end{tikzpicture}}
	\end{center}
	By looking at the intersections of the tangent spaces at singular points with the variety, we see that the correct one is the one on the right.\end{example}
	
In \cite{G18} the second author presents a change of coordinates, based on the exponential description of the tensor signature, for which the universal variety $\mathcal{U}_{d,k}$ is the image of a monomial map, and therefore it is toric. However, such change of coordinates fails to prove that every $\AAA_{\nu,k}$ is toric as well. For instance, it does not turn the ideal of $\AAA_{(1,2,1),3}$ into a binomial ideal. One could still hope to find another change of coordinates that makes both the ideals of $\mathcal{A}_{\nu,k}$ and $\mathcal{U}_{d,k}$ binomial. Unfortunately, this is not true.

\begin{example}
	 %For this example it is convenient to work in the affine space $\mathbb{C}^8$.
	The polytopes $P$ and $Q$ of  $\mathcal{U}_{3,3} %= \mathcal{A}_{(1,2,1,2,1), 3}
	$ and $\mathcal{A}_{(1,2,1), 3}$ are
	\begin{center}
		\usetikzlibrary{arrows}\scalebox{0.7}{
			\begin{tikzpicture}
			\draw (0,0) -- (3,0) -- (0,2) -- (0,0);
			\draw (0,0) -- (-0.5,-0.5) -- (0,2);
			\draw (-0.5,-0.5) -- (3,0);
			\node at (-0.5,-0.5) {\textcolor{black}{$\bullet$}};
			\node at (0,0) {$\bullet$};
			\node at (1,0) {$\bullet$};
			\node at (2,0) {$\bullet$};
			\node at (3,0) {$\bullet$};
			\node at (0,1) {$\bullet$};
			\node at (0,2) {\textcolor{black}{$\bullet$}};
			\node at (1,1) {$\circ$};
			%\draw[thick, ->, >=stealth'] (4,1) -- (5,1);
	%		\node at (4.5,1.25) %{$\varphi$};
			\draw (6,0) -- (9,0) -- (6,2) -- (6,0);
			\node at (6,0) {$\bullet$};
			\node at (7,0) {$\bullet$};
			\node at (8,0) {$\bullet$};
			\node at (9,0) {$\bullet$};
			\node at (6,1) {$\bullet$};
			\node at (6,2) {$\bullet$};
			\node at (7,1) {$\circ$};
			\end{tikzpicture}}
	\end{center}%The toric inclusions $\mathcal{A}_{(1,2,1), 3}\subset\mathcal{U}_{3,3}\subset\p^7$ would induce surjections %the inclusions $T_X \subset T_Y \subset T_{\mathbb{C}^8}$ and the diagrams
%	\begin{eqnarray*}
%		\xymatrix{  & \mathbb{Z}^8\ar@{->>}[ld]\ar@{->>}[rd]  & \\ \mathbb{Z}^2 & & \mathbb{Z}^3\ar@{->>}[ll] } \hspace{1cm}\mbox{ and }\hspace{1cm}
%		\xymatrix{  & \mathbb{Z}^8_+\ar@{->>}[ld]\ar@{->>}[rd]  & \\ C_{P} & & C_Q.\ar@{->>}[ll] }
%	\end{eqnarray*}
There are two cases how $\mathcal{A}_{(1,2,1),3}$ could be a toric subvariety of $\mathcal{U}_{3,3}$. Either it is a toric divisor or the inclusion is a toric morphism.
	A careful technical analysis shows that no change of coordinates in $\p^7$ can simultaneously make the ideals of both $\mathcal{A}_{(1,2,1),3}$ and $\mathcal{U}_{3,3}$ binomial.
\end{example}
Despite previous Example, the axis paths variety turns out to be toric in many cases.

\begin{proposition}\label{pro:partial results on toricness}
	Let $d=\max(\nu_1,\ldots,\nu_m)$. Then
\begin{enumerate}
%	\item The length of $\nu$ is large compared to $d$.
	\item $\AAA_{\nu, 3}$ is toric for every $d\le 2$. 
	\item $\AAA_{\nu, 2}$ is toric for every $d \leq 3$.
%		$\mathcal{A}_{(1,2,1), 3}$ and $\mathcal{A}_{(1,2,1,2),3}$ are toric.
	\item $\mathcal{A}_{(1,2,\dots, d,j), 2}$ is toric for every $j\in\{1,\dots, d-1\}$.
\end{enumerate}
\begin{proof}It is not difficult to check all possible cases for the first two items. The computation can be found in \cite{CGM19}.
%	\begin{enumerate}		\item The sequences $\nu=(1,2,1)$ and $\nu=(1,2,1,2)$ are addressed in \cite{CGM19}. The next case $\AAA_{(1,2,1,2,1),3}=\U_{2,3}$ already fills the universal variety, and we know that it is toric.
%		\item When we apply the change of coordinates \[\left( \begin{matrix}		\tilde{s}_{11} & \tilde{s}_{12} &\tilde{s}_{13}\\		\tilde{s}_{21} & \tilde{s}_{22} &\tilde{s}_{23}\\		\tilde{s}_{31} & \tilde{s}_{32} &\tilde{s}_{33}		\end{matrix}\right) =\left(  \begin{matrix}		2s_{11} & s_{12}+s_{21} &s_{13}+s_{31}\\s_{21} & 2s_{22} &s_{23}+s_{32}\\		s_{31} &s_{23} &2s_{33}		\end{matrix}\right) .		\]
%		on $(\R^3)^{\otimes 2}$, the ideal of $\mathcal{A}_{\nu,2}$ becomes a toric ideal for every $d\le 3$.
	For the third item, we have $m=d+1$. Let us  apply the change of coordinates		\[		\begin{cases}		\tilde{a}_j = a_j + a_{d+1}\\		\tilde{a}_i = a_i \text{ for } i\neq j		\end{cases}\]	on the domain $\R^{d+1}$. By Lemma \ref{lem:CombDescription}, we know that the entries of $\sigma^{(2)}(X)$ are
	\[
	\sigma^{(2)}(X)_{il}=\begin{cases}
	a_ia_l=\tilde{a}_i\tilde{a}_l & \mbox{for $i,l\neq j$}\\
	a_i(a_j+a_{d+1})=\tilde{a}_i\tilde{a}_j& \mbox{for $i<l=j$}\\
	(a_j+a_{d+1})^2=(\tilde{a}_j)^2& \mbox{for $i=l=j$}\\
	a_{i}a_{d+1}=\tilde{a}_i\tilde{a}_{d+1}& \mbox{for $i>l=j$}\\
	0&\mbox{for $i=j<l$}\\
	a_ja_i&\mbox{for $i=j>l$}.
	\end{cases}\]
Finally, if we perform the change of coordinates $\sigma^{(2)}(X)_{jd}\mapsto \sigma^{(2)}(X)_{jd}+\sigma^{(2)}(X)_{j,d+1}$, the map becomes monomial.
	\end{proof}
%\todo[inline]{We could extend item 3 to the case of only one repeated entry, possibly repeated more than twice and in any positions.}
\end{proposition}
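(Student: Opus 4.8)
The statement to prove is Proposition~\ref{pro:partial results on toricness}, which has three parts. For the first two items, $\AAA_{\nu,3}$ for $d\le 2$ and $\AAA_{\nu,2}$ for $d\le 3$, the plan is essentially a finite case check: since the shape $\nu$ only has finitely many ``types'' once we fix $d$ and $k$ (two consecutive entries cannot point in the same axis direction, so the length $m$ is bounded only in the sense that the induced partition $\pi_\nu$ has a controlled structure; in fact for toricness one can merge consecutive equal directions, and more importantly the signature only depends on $\pi_\nu$ through Lemma~\ref{lem:CombDescription}), the number of genuinely distinct cases is small. Concretely, I would use Lemma~\ref{lem:CombDescription} to write down the parametrizing polynomial map $g_{\nu,k}$ explicitly for each shape, feed it to the Macaulay2 code of \cite{CGM19}, and verify that a Gröbner basis of the vanishing ideal consists of binomials. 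This is the kind of verification that is best delegated to a computer, and the paper does exactly that; in the write-up I would simply list the cases and point to \cite{CGM19}.

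\textbf{The third item} is the substantive one and deserves a hands-on argument. Here $\nu=(1,2,\dots,d,j)$ with $1\le j<d$, so $m=d+1$ and the partition $\pi_\nu$ has one two-element block $\{j,d+1\}$ and singleton blocks everywhere else. The idea is to find a linear change of coordinates on the \emph{source} $\R^{d+1}$ and a linear change of coordinates on the \emph{target} tensor space after which $g_{\nu,2}$ becomes a monomial map; once that is done, the image is by definition toric. First I would substitute $\tilde a_j=a_j+a_{d+1}$ and $\tilde a_i=a_i$ for $i\ne j$. Plugging this into the formula of Lemma~\ref{lem:CombDescription} and organizing the $2\times 2$ entries $\sigma^{(2)}(X)_{i\ell}$ by the position of the indices relative to $j$, one sees that every entry becomes either $0$, a single squared variable $\tilde a_j^2$, a clean product $\tilde a_i\tilde a_\ell$ of two new variables, or the one stubborn expression $a_j a_i$ for $i>j$ in the slot $\sigma^{(2)}_{ji}$, which in the new variables reads $(\tilde a_j-\tilde a_{d+1})\tilde a_i$ and hence is still a binomial rather than a monomial. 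The final move is a linear change of coordinates on the target, replacing the coordinate $\sigma^{(2)}_{jd}$ by $\sigma^{(2)}_{jd}+\sigma^{(2)}_{j,d+1}$ (and similarly cleaning up the other indices $i>j$), which cancels the offending $-\tilde a_{d+1}\tilde a_i$ term against the $\tilde a_i\tilde a_{d+1}$ sitting in the $\sigma^{(2)}_{i j}$-type slot, turning the whole map monomial.

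\textbf{Main obstacle.} The delicate point in item (3) is bookkeeping: one must be careful about which tensor slots $(i,\ell)$ carry a genuine monomial, which carry $0$, and exactly which pair of slots must be added together in the target change of coordinates so that the lower-triangular and upper-triangular contributions of the block $\{j,d+1\}$ recombine into monomials without destroying any already-monomial entry. In particular one needs that the target coordinate change is invertible (a unipotent triangular substitution, so it is), and that after it \emph{every} coordinate of $g_{\nu,2}$ is a monomial in $\tilde a_1,\dots,\tilde a_{d+1}$ --- it is exactly here that the hypothesis $j<d$ (so that the block $\{j,d+1\}$ is ``to the left of'' the last coordinate, giving room for the pairing) is used. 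Once the map is monomial, toricness is immediate and no further argument is needed.
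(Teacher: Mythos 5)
Your proposal is correct and follows exactly the same route as the paper: items (1) and (2) are delegated to the computer verification in \cite{CGM19}, and item (3) is handled by the source-side substitution $\tilde a_j = a_j + a_{d+1}$, $\tilde a_i = a_i$ for $i\neq j$, followed by a unipotent linear change on the target that adds the monomial $\tilde a_i\tilde a_{d+1}$ from the $(i,j)$-slot into the $(j,i)$-slot to cancel the stray $-\tilde a_{d+1}\tilde a_i$. Your write-up actually states the needed target-side cancellation a bit more explicitly than the paper does (the paper's single displayed substitution is really shorthand for doing this for every $i>j$), and your only small overstatement is attributing the need for $j<d$ to the cancellation step — that hypothesis is there to make $\nu$ a genuine $(d+1)$-step shape, and for $j=d$ the argument would go through vacuously since no binomial entries arise.
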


There are further cases in which we can prove that the axis paths variety is toric. %Moreover, when $\AAA_{\nu,k}$ is toric we can add a new entry to $\nu$ and the new variety is still toric.
\begin{lemma}\label{lem:adding extra letter} 
%Let $\nu$ be a shape and let $l\in\{1,\dots,d\}$ be a letter not appearing in $\nu$. Let $\nu l$ denote the sequence $(\nu_1,\dots, \nu_m,l)$. 
If the ideal of $\AAA_{\nu,\leq k}$ is binomial after a linear change of coordinates, then the same holds for the ideal of $\AAA_{\nu  (d+1),k}$.
	\begin{proof}Let $X$ be an axis path of shape $\nu$ and sequence of length $(a_1,\dots,a_m)$. Since $d+1$ does not appear in $\nu$, in the set partition $\pi_{\nu (d+1)}$ the $(d+1)$-st block only contains the entry $a_{m+1}$. Therefore, by Lemma \ref{lem:CombDescription},
	\begin{eqnarray*}
	\sigma(X)_{i_1\dots i_{\tilde{k}} (d+1)^{k-\tilde{k}}} =
	\begin{cases}
		\frac{1}{(k-\tilde{k})!}(a_{m+1})^{k-\tilde{k}}\sigma(X)_{i_1\dots i_{\tilde{k}}} & \mbox{if $d+1$ does not appear in } (i_1,\dots, i_{\tilde{k}})\\
		0 & \mbox{otherwise.}
\end{cases}
	\end{eqnarray*}

Hence, 	$\AAA_{\nu  (d+1),k}$, up to diagonal change of coordinates, is equal to the projectivization of $\AAA_{\nu,\leq k}$.
%By hypothesis, we have a way to write $\AAA_{\nu,k}$ as the image of a monomial map, so $\sigma(X)_{i_1\dots i_{\tilde{k}}}$ is a monomial. Therefore $\sigma(X)_{i_1\dots i_{\tilde{k}} l^{k-\tilde{k}}}$ is a monomial and then $\AAA_{\nu l,k}$ is toric as well.
\end{proof}
\end{lemma}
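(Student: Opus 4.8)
The plan is to make the reduction completely explicit via Lemma \ref{lem:CombDescription}. Write $X'=X\sqcup a_{m+1}e_{d+1}$ for the extended path, of shape $\nu(d+1)$. In the set partition $\pi_{\nu(d+1)}$ the block corresponding to $d+1$ is the singleton $\{m+1\}$, and $m+1$ is the largest index; so in the sum of Lemma \ref{lem:CombDescription} any non-decreasing sequence $(j_1,\dots,j_k)$ contributing to an entry $\sigma(X')_{i_1\dots i_k}$ in which the letter $d+1$ occurs must send those positions to the index $m+1$, which forces all occurrences of $d+1$ to sit at the end of the word. Hence $\sigma(X')_{i_1\dots i_k}=0$ unless the word is of the form $i_1\dots i_{\tilde k}(d+1)^{k-\tilde k}$ with $i_1,\dots,i_{\tilde k}\in\{1,\dots,d\}$, in which case the tail contributes the factor $a_{m+1}^{\,k-\tilde k}$ together with the denominator $s_{m+1}!=(k-\tilde k)!$, and the initial segment reproduces verbatim the formula for $\sigma(X)_{i_1\dots i_{\tilde k}}$. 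This is precisely the displayed formula.

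From this I read off the parametrization of $\AAA_{\nu(d+1),k}\subset\p^{(d+1)^k-1}$. The coordinates indexed by words in which $d+1$ appears outside a final block vanish identically, so $\AAA_{\nu(d+1),k}$ lies in the coordinate subspace they cut out and these coordinates contribute only monomial generators to the ideal. On the surviving coordinates $z_{w'(d+1)^r}$, with $w'$ a word of length $k-r$ in $\{1,\dots,d\}$, the parametrization reads $z_{w'(d+1)^r}=\tfrac{1}{r!}\,a_{m+1}^{\,r}\,\sigma(X)_{w'}$ (the empty-word entry being $1$); rescaling $z_{w'(d+1)^r}$ by $r!$ is a diagonal linear change of coordinates, hence harmless, and afterwards $z_{w'(d+1)^r}=a_{m+1}^{\,r}\,\sigma(X)_{w'}$. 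Setting $t:=a_{m+1}$ and grouping by the tensor order $\ell:=k-r$, this is exactly the system $t^{\,k-\ell}\sigma^{(\ell)}(X)_{w'}$, i.e.\ a ``degree-$k$'' collection of monomials in the coordinates of $\AAA_{\nu,\le k}$ and the extra variable $t$, where $\sigma^{(\ell)}_{w'}$ has weight $\ell$ and $t$ has weight $1$. In other words, $\AAA_{\nu(d+1),k}$ is, after the diagonal rescaling and the deletion of the vanishing coordinates, the projectivization of $\AAA_{\nu,\le k}$ viewed as a cone for the path-rescaling action $\sigma^{(\ell)}\mapsto\lambda^\ell\sigma^{(\ell)}$.

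It then remains to transfer binomiality. If $I(\AAA_{\nu,\le k})$ is binomial in coordinates obtained by a linear change that is block-diagonal with respect to tensor order, then applying the same change to each block $\{z_{w'(d+1)^r}: r \text{ fixed}\}$ is again a linear change of coordinates, and in the new coordinates the homogeneous ideal of $\AAA_{\nu(d+1),k}$ is generated by binomials: the $2\times2$-minor-type binomials recording that the $z_{w'(d+1)^r}$ form a single ``degree-$k$'' linear system, the binomial images of the generators of $I(\AAA_{\nu,\le k})$, and the monomial generators coming from the vanishing coordinates.

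The main obstacle is exactly the proviso that the change of coordinates respect the tensor grading: because each coordinate of $\AAA_{\nu(d+1),k}$ carries a power of $a_{m+1}$ tied to its tensor order, a linear change that mixes different orders on $\AAA_{\nu,\le k}$ need not induce a linear change on $\AAA_{\nu(d+1),k}$, and $\AAA_{\nu,\le k}$ need not stay a cone for the path-rescaling action after such a change. I would remove this obstruction using Proposition \ref{cor:Chen for axis paths}, where $\AAA_{\nu(d+1),\le k}$ is exhibited as a coordinate projection of the Segre product $\AAA_{\nu,\le k}\times\AAA_{(d+1),\le k}$, the second factor being a monomial curve, hence toric. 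The tensor product of two linear changes of coordinates is again linear, so a linear change making $I(\AAA_{\nu,\le k})$ binomial induces one making the Segre product's ideal binomial; and the property ``binomial after a linear change of coordinates'' is preserved under Segre products, coordinate projections (a coordinate projection of the closure of the image of a monomial map is again the closure of the image of a monomial map), and projectivization of an affine cone. Since $\AAA_{\nu(d+1),k}$ is obtained from $\AAA_{\nu(d+1),\le k}$ by one further coordinate projection followed by projectivization, the claim follows with no restriction on the change of coordinates.
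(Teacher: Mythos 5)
Your derivation of the displayed formula and the identification of $\AAA_{\nu(d+1),k}$ as (up to diagonal rescaling) the weighted projectivization of $\AAA_{\nu,\le k}$ match the paper's argument exactly. You then go further and correctly flag a genuine issue that the paper's terse proof glosses over: a linear change of coordinates making $I(\AAA_{\nu,\le k})$ binomial need not respect the tensor-order grading, in which case it does not induce a linear change of coordinates on the ambient space of $\AAA_{\nu(d+1),k}$, and the projectivization step does not transfer binomiality on its own.

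However, your proposed repair via Proposition \ref{cor:Chen for axis paths} and the Segre product does not close the gap. The crucial claim, that ``binomial after a linear change of coordinates'' is preserved under coordinate projections, is false. For instance, take the twisted cubic $C=\{(t,t^2,t^3)\}\subset\C^3$ and the linear automorphism $T(x,y,z)=(x+z,y,z)$: then $V:=T(C)$ is binomial after the linear change $T^{-1}$, but its projection to the first two coordinates is $\{(t+t^3,t^2)\}$, cut out by $x^2=y(1+y)^2$, a nodal plane cubic, which is not a linear image of any monomial curve (the only irreducible binomial plane cubics through the origin are smooth or cuspidal, and the singularity type is a linear invariant). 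Your parenthetical --- a coordinate projection of the closure of a monomial map is again one --- is true but inapplicable here: the monomial structure lives in the $T_1\otimes T_2$-transformed coordinates, whereas the coordinate projection onto $\AAA_{\nu(d+1),\le k}$ is taken in the original coordinates, and these are compatible only if $T_1$ preserves the tensor grading, which is precisely the hypothesis you were trying to dispense with. So the Segre route smuggles the same obstruction back in, and the argument does not establish the lemma in the generality you aim for.
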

We finish this section with the following natural question.
\begin{question}\label{quest:toricness}
Is every variety $\AAA_{\nu,k}$ toric?
\end{question}
%In this paper we present several cases for which the answer to this question is positive. Some of them are particular cases and others include larger families.

Our results provide a positive answer in all relatively small cases. However, since we found no general technique, we expect that the answer may be negative. The main obstacle to provide a nontoric example is that a first potential candidate is already too large to be dealt with, either with ad hoc geometric arguments or general computational methods \cite{katthan2017polynomial}.

%For instance, using Lemma \ref{lem:adding extra letter}, the first item in Proposition \ref{pro:partial results on toricness} may be extended to $d\le 3$ since there are a few extra cases to check. We do not find this particular case illustrative since we are interested in a more general answer. For that, we still need to get a better understanding of the axis-parallel path varieties. 

\subsection{Dimension of $\AAA_{\nu,k}$}\label{sub:axisDim}
As we stressed in the introduction, a fundamental problem when we deal with signatures is to determine whether it is possible to recover the path from the signature. The best case scenario is injectivity, when we can uniquely reconstruct the path. In general this is too much to hope, so we focus on a weaker property, and we want to know when $g_{\nu,k}$ is  generically finite. Therefore we are interested in the dimension of the fibers, or equivalently in $\dim \AAA_{\nu,k}$. Since $\AAA_{\nu,k}\subset\U_{d,k}$ is the image of $\R^{\ell(\nu)}$ under a polynomial map, we have
	\[\dim\mathcal{A}_{\nu,k}\le\min\{\ell(\nu),\dim(\U_{d,k})\}.\]
	In general, the inequality can be strict. For instance, the dimension of $\AAA_{(1,2,1,2,3),3}$ is 4, while $\ell(\nu)=5$ and $\dim(\U_{3,3})=7$. 
In order to better study the dimension of our axis-parallel signature variety, we introduce the following definition.
\begin{definition}
	For $d,k\in\N$, we define the set
	\[N_{d,k}:=\{\nu\mid \max(\nu)\le d\mbox{ and $G_{\nu,k}$ is not generically finite}\}.\]
	We say that $\AAA_{\nu,k}$ is \emph{defective} if $\nu\in N_{d,k}$ for $d\ge \max(\nu)$. Otherwise, we say that $\AAA_{\nu,k}$ has the expected dimension. 
\end{definition}
It may seem that we made a choice in using $G_{\nu,k}$ instead of $g_{\nu,k}$ in the previous definition. However, the $k$-th signature of a generic piecewise smooth path $X$ defines all the previous signatures up to finitely many choices (see \cite[Section 6]{AFS18}). %MM:check generic 
Therefore,
%\todo[inline]{Laura: The rest of the subsection has notation on functions $f_{\nu,k}$ and $F_{\nu,k}$. I think that in our notation right now, they should be $g$ and $G$. I have changed them, let me know if I was wrong and I will undo it.}
	\[N_{d,k}=\{\nu\mid \max(\nu)\le d\mbox{ and $g_{\nu,k}$ is not generically finite}\}.\]
For the same reason, $N_{d,k+1}\subset N_{d,k}$ for every $k$.
%We want to understand elements of $N_{d,k}$. 
Each $N_{d,k}$ is a language, i.e.~a set of words. It would be very interesting to completely characterise it - cf.~Conjecture \ref{con:nondef}. 
First, we prove that $N_{d,k}$ is absorbing with respect to concatenation.

\begin{lemma}
	\label{lem:concatenation remains defective} Let $\nu_1,\nu_2$ be two shapes, let $\nu_1\nu_2$ be their concatenation and let $d=\max(\nu_1\nu_2)$. If $\nu_1\in N_{d,k}$, then both $\nu_1\nu_2$ and $\nu_2\nu_1$ belong to $N_{d,k}$.
	\begin{proof}
		Take a general point of $\AAA_{\nu_1\nu_2,\le k}$. It is of the form $\sigma^{\le k}(X)$ for some general axis-parallel path $X$. Since $X$ has shape $\nu_1\nu_2$, we can write it as a concatenation $X_1X_2$, for general paths $X_1$ of shape $\nu_1$ and $X_2$ of shape $\nu_2$. By hypothesis, $\nu_1\in N_{d,k}$, and so there exist infinitely many paths $Y$ of shape $\nu_1$ such that $g_{\nu_1,a}(Y)=g_{\nu_1,a}(X_1)$ for every $a\le k$. By Proposition \ref{cor:Chen for axis paths},
\[g_{\nu_1\nu_2,k}(YX_2)=
\sum_{a+b= k}g_{\nu_1,a}(Y)\otimes g_{\nu_2,b}(X_2)=
\sum_{a+b= k}g_{\nu_1,a}(X_1)\
\otimes g_{\nu_2,b}(X_2)=
g_{\nu_1\nu_2,k}(X),\]
hence the fiber containing $X$ is not finite and therefore $\nu_1\nu_2\in N_{d,k}$. In the same way it is possible to prove that $\nu_2\nu_1\in N_{d,k}$.
	\end{proof}
%	\todo[inline]{Laura: I have a question. What happens if $\nu_1\nu_2$ is too long and ends up filling the universal variety? Are we excluding this case here? I got asked this question during a talk, and I know we need to exclude that situation, but it's not clear to me if we do with the definition that we have for being defective.}
\end{lemma}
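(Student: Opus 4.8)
The plan is to transport a positive–dimensional fiber of the shape-$\nu_1$ map to a positive–dimensional fiber of the shape-$\nu_1\nu_2$ map via Chen's identity for axis paths (Proposition \ref{cor:Chen for axis paths}). First I would use the remark following the definition of $N_{d,k}$, which lets us read $\nu_1\in N_{d,k}$ as: $G_{\nu_1,k}$ is not generically finite, so over a general axis path $X_1$ of shape $\nu_1$ there is an infinite (in fact positive–dimensional) family of axis paths $Y$ of shape $\nu_1$ with $g_{\nu_1,a}(Y)=g_{\nu_1,a}(X_1)$ for every $a\le k$.

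Next I would take a general point of $\AAA_{\nu_1\nu_2,\le k}$, write it as $\sigma^{\le k}(X)$ for a general axis path $X$ of shape $\nu_1\nu_2$, and decompose $X=X_1X_2$ with $X_1$ of shape $\nu_1$ and $X_2$ of shape $\nu_2$. The only point requiring care — and the main (mild) obstacle — is the generality transfer: one must know that when $X$ ranges over a dense open subset of shape-$\nu_1\nu_2$ paths, the first factor $X_1$ lands in the dense open subset of shape-$\nu_1$ paths on which the non-finiteness of $g_{\nu_1,k}$ holds. This is immediate because concatenation of axis paths on the level of length-sequences is literally the identity map $\R^{\ell(\nu_1)}\times\R^{\ell(\nu_2)}\to\R^{\ell(\nu_1\nu_2)}$, so the preimage of a dense open set is dense open; but it should be stated explicitly.

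Then I would apply Proposition \ref{cor:Chen for axis paths} at each order $j\le k$: for every $Y$ in the family above,
\[g_{\nu_1\nu_2,j}(YX_2)=\sum_{a+b=j}g_{\nu_1,a}(Y)\otimes g_{\nu_2,b}(X_2)=\sum_{a+b=j}g_{\nu_1,a}(X_1)\otimes g_{\nu_2,b}(X_2)=g_{\nu_1\nu_2,j}(X),\]
since every index $a$ occurring satisfies $a\le j\le k$. Hence $G_{\nu_1\nu_2,k}(YX_2)=G_{\nu_1\nu_2,k}(X)$. As distinct $Y$ give distinct length-sequences $YX_2$ (the last $\ell(\nu_2)$ coordinates are held fixed equal to those of $X_2$), the fiber of $G_{\nu_1\nu_2,k}$ through $X$ contains an infinite family, so $G_{\nu_1\nu_2,k}$ is not generically finite and $\nu_1\nu_2\in N_{d,k}$. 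Running the identical computation with the roles of the two factors exchanged, using $g_{\nu_2\nu_1,j}(X_2Y)=\sum_{a+b=j}g_{\nu_2,a}(X_2)\otimes g_{\nu_1,b}(Y)$, gives $\nu_2\nu_1\in N_{d,k}$ as well.
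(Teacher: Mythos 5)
Your proof is correct and follows essentially the same route as the paper: decompose a general $X$ of shape $\nu_1\nu_2$ as $X_1X_2$, use defectivity of $\nu_1$ to vary $X_1$ in a positive-dimensional family $Y$, and apply Proposition~\ref{cor:Chen for axis paths} to see that $YX_2$ stays in the fiber. The small additions you make — spelling out why generality transfers through concatenation (the parameter spaces multiply) and why distinct $Y$ give distinct $YX_2$ — are correct and tighten a step the paper leaves implicit.
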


On the other hand, if $\AAA_{\nu,k}$ is not defective, then we can add a new letter at any point in $\nu$ and the resulting variety is still of expected dimension.

\begin{lemma}\label{lem:adding a new letter does not affect defectiveness} Let $\nu\notin N_{d,k}$ and let $d=\max(\nu)$. If we write $\nu=\nu_1\nu_2$ and we take a new letter $l>d$, then $\nu_1\cdot l\cdot\nu_2\notin N_{l,k}$.
	\begin{proof}
Let us pick a general element in the affine cone over $\AAA_{\nu_1\cdot l\cdot\nu_2,k}$.
First we know that, up to finite number of choices, we can identify the parameter $a$ associated to $l$, as $\sigma_{(l^k)}=\frac{a^k}{k!}$. We may also identify the other parameters as the signatures indexed by numbers from one to $d$ are the same for $\nu$ and $\nu_1\cdot l\cdot\nu_2$.
%\todo[inline]{Laura: I don't know how to prove this with the definition of $N_{d,k}$ that we have.}
	\end{proof}
\end{lemma}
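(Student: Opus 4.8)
The plan is to reduce the statement for $\nu_1 \cdot l \cdot \nu_2$ directly to the non-defectiveness of $\nu = \nu_1 \nu_2$, by showing that a general fiber of $g_{\nu_1 \cdot l \cdot \nu_2, k}$ injects (up to finitely many choices) into a general fiber of $g_{\nu, k}$. The key structural fact is that $l > d = \max(\nu)$ is a \emph{fresh} letter appearing exactly once in the shape, so in the set partition $\pi_{\nu_1 \cdot l \cdot \nu_2}$ the block corresponding to $l$ is a singleton.

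First I would fix a general point in the affine cone over $\AAA_{\nu_1 \cdot l \cdot \nu_2, k}$, which by definition is $\sigma^{(k)}(X)$ for a general axis path $X$ of shape $\nu_1 \cdot l \cdot \nu_2$ with length sequence $(a_1, \dots, a_p, b, a_{p+1}, \dots, a_m)$, where $b$ is the length of the step in direction $l$ and $p = \ell(\nu_1)$. The first step is to recover $b$ (up to a finite choice): by Lemma \ref{lem:CombDescription}, the entry indexed by the word $(l^k)$ of length $k$ is $\sigma^{(k)}(X)_{(l^k)} = \frac{b^k}{k!}$, since the only non-decreasing sequence with all indices in the singleton block $\{p+1\}$ is the constant one. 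Hence $b$ is determined up to a $k$-th root of unity, i.e.\ finitely many values. The second step is to observe that every entry of $\sigma^{(k)}(X)$ indexed by a word $w$ in the alphabet $\{1, \dots, d\}$ (i.e.\ not using the letter $l$) is exactly equal to the corresponding entry of $\sigma^{(k)}(X')$, where $X'$ is the axis path of shape $\nu = \nu_1 \nu_2$ obtained by deleting the $l$-step, with length sequence $(a_1, \dots, a_m)$. This is because in Lemma \ref{lem:CombDescription} the sum ranges over non-decreasing index sequences with $j_l \in \pi_{i_l}$, and none of those blocks is touched by removing the singleton block of $l$; the re-indexing of positions $\{1, \dots, m+1\} \setminus \{p+1\}$ to $\{1, \dots, m\}$ is order-preserving and does not change the monomials $a_{j_1} \cdots a_{j_k}$ or the multiplicities $s_\bullet$. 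So all the $(a_1, \dots, a_m)$-coordinates of the fiber over $\sigma^{(k)}(X)$ inject into the fiber over $\sigma^{(k)}(X')$.

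Putting these together: given the general point $\sigma^{(k)}(X)$, the parameter $b$ is pinned down up to finitely many choices, and the remaining parameters $(a_1, \dots, a_m)$ are constrained to lie in a general fiber of $g_{\nu, k}$, which is finite since $\nu \notin N_{d, k}$. A general axis path of shape $\nu_1 \cdot l \cdot \nu_2$ is nondegenerate and its deletion $X'$ is a general axis path of shape $\nu$ (genericity is preserved under the dominant projection forgetting $b$), so we are indeed reading off a \emph{general} fiber of $g_{\nu, k}$. Therefore the fiber of $g_{\nu_1 \cdot l \cdot \nu_2, k}$ through a general point is finite, i.e.\ $\nu_1 \cdot l \cdot \nu_2 \notin N_{l, k}$.

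The main obstacle, and the point that needs the most care, is the genericity bookkeeping: one must check that ``general point of $\AAA_{\nu_1 \cdot l \cdot \nu_2, k}$'' really does correspond, after forgetting $b$, to a ``general point of $\AAA_{\nu, k}$'' rather than landing in some proper subvariety where the fiber could jump in dimension. This follows because the map $(a_1, \dots, a_m, b) \mapsto \sigma^{(k)}$ factors, on the $\{1,\dots,d\}$-indexed coordinates, through the projection $(a_1, \dots, a_m, b) \mapsto (a_1, \dots, a_m)$ followed by $g_{\nu, k}$, and this projection is dominant; hence the preimage of the non-generic locus of $g_{\nu, k}$ is a proper closed subset. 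A secondary subtlety is verifying that the word $(l^k)$ is a legitimate index and that $\sigma^{(k)}(X)_{(l^k)} \ne 0$ for general $X$, so that $b$ is genuinely recoverable rather than ambiguous; this is immediate since $b \ne 0$ generically and the relevant entry is $b^k/k! \ne 0$.
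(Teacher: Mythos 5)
Your proof is correct and follows essentially the same approach as the paper: recover the new parameter (up to a $k$-th root) from the $(l^k)$-entry, and then observe that the entries indexed by words in $\{1,\dots,d\}$ are unchanged upon deleting the singleton $l$-step, reducing to the finiteness of a general fiber of $g_{\nu,k}$. Your additional bookkeeping that the forgetful projection $(a_1,\dots,a_m,b)\mapsto(a_1,\dots,a_m)$ is dominant, so that genericity is preserved, is a sound and welcome elaboration of a step the paper leaves implicit.
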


We noticed several times that the universal variety can be obtained as $\AAA_{\nu,k}$ for sufficiently long $\nu$. Now we want to prove that this can be done in an efficient way, namely by using a sequence with as many entries as $\dim\U_{d,k}$.
\begin{lemma}\label{lem:filling path}
	For every $d$ and $k$, there is a shape $\nu$ such that $\max(\nu)=d$, $\AAA_{\nu,k}=\U_{d,k}$ and $\ell(\nu)=\dim\U_{d,k}$.
\begin{proof}
	By Remark \ref{rmk:fill the universal}, there is a $\nu'$ satisfying the first two properties. We build $\nu$ as a subsequence of $\nu'$ by deleting those entries that do not increase the dimension of the variety in the following way. If $\nu'$ satisfies the last requirement too, we are done. Suppose then that $\nu'=(\nu_1,\dots,\nu_s)$ for $s>\dim \U_{d,k}$. By construction, there is an index $i\in\{1,\dots, s-1\}$ such that $\AAA_{(\nu_1,\dots,\nu_{i}),k}=\AAA_{(\nu_1,\dots,\nu_{i+1}),k}$. If $i=s-1$, take $\nu':=(\nu_1,\dots, \nu_{s-1})$ and proceed by induction on the length $\nu'$. Otherwise, let $\nu:=(\nu_1,\dots,\widehat{\nu_{i+1}},\dots,\nu_s)$, where the entry with the hat is omitted. Consider the sequences
	\begin{align}
	\alpha :=(\nu_1,\dots,\nu_{i}),\nonumber\quad
	\alpha':=(\nu_1,\dots,\nu_{i+1}),\nonumber\quad
	\beta  :=(\nu_{i+2},\dots,\nu_s),\nonumber
	\end{align}
	so that $\nu'=\alpha'\beta$ and $\nu=\alpha\beta$. By Proposition \ref{cor:Chen for axis paths}, the map
	\[\sigma^{\le k}\colon\R^{\ell(\nu')}\to\AAA_{\nu',\le k}\]
	factors as
	\[\R^{\ell(\nu')}\to\AAA_{\alpha',\le k}\times\AAA_{\beta,\le k}\xrightarrow{\text{Segre}} \AAA_{\nu',\le k}.\]
	We also have another map
	\[\R^{\ell(\nu)}\to\AAA_{\alpha,\le k}\times\AAA_{\beta,\le k}\xrightarrow{\text{Segre}} \AAA_{\nu,\le k}.\]
	Since $\AAA_{\alpha,\le k} = \AAA_{\alpha',\le k}$, we conclude that $\AAA_{\nu,\le k}=\AAA_{\nu',\le k}$.	
	We continue the process until there is no such index $i$. The resulting subsequence of $\nu'$ satisfies the requirements of the statement. 
\end{proof}
\end{lemma}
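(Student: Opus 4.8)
The plan is to start from the shape provided by Remark \ref{rmk:fill the universal} and trim it one entry at a time. That remark gives a shape $\nu'$ with $\max(\nu')=d$ and $\AAA_{\nu',k}=\U_{d,k}$; since $\AAA_{\nu',k}$ is the closure of the image of $\R^{\ell(\nu')}$, necessarily $\ell(\nu')\ge\dim\U_{d,k}$, and if equality holds we are done. So I would assume $\ell(\nu')>\dim\U_{d,k}$ and look for a single entry of $\nu'$ whose deletion changes neither the variety nor the maximal letter; iterating such a deletion finitely many times brings the length down to $\dim\U_{d,k}$ while preserving all the desired properties.

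To find such an entry I would consider the chain of closed subvarieties $\AAA_{(),\le k}\subseteq\AAA_{(\nu_1),\le k}\subseteq\AAA_{(\nu_1,\nu_2),\le k}\subseteq\cdots\subseteq\AAA_{\nu',\le k}$, where each inclusion comes from realising a shorter axis path as a degenerate longer one by setting the last length to $0$. Each of these varieties is irreducible (it is the closure of the image of an affine space) and has dimension at most $\dim\U_{d,k}$: indeed $\AAA_{\nu,\le k}$ maps generically finitely onto $\AAA_{\nu,k}\subseteq\U_{d,k}$, because a generic $k$-th signature determines the lower ones up to finitely many choices (\cite[Section 6]{AFS18}). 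Since every strict inclusion in the chain raises the dimension by at least one while the chain stays within varieties of dimension $\le\dim\U_{d,k}<\ell(\nu')$, at least one inclusion must be an equality of irreducible varieties of the same dimension, say $\AAA_{(\nu_1,\dots,\nu_i),\le k}=\AAA_{(\nu_1,\dots,\nu_{i+1}),\le k}$; this is the redundant step.

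Next I would check that the letter $\nu_{i+1}$ already occurs in $(\nu_1,\dots,\nu_i)$: otherwise the coordinate $\sigma_{(\nu_{i+1}^k)}$ vanishes identically on $\AAA_{(\nu_1,\dots,\nu_i),\le k}$ but not on $\AAA_{(\nu_1,\dots,\nu_{i+1}),\le k}$, contradicting the equality. Hence deleting $\nu_{i+1}$ removes no letter's last occurrence, so $\max$ is preserved. If $i+1$ is the last index we simply drop it, and the equality of prefix varieties is exactly the equality of the two full varieties. Otherwise write $\nu'=\alpha'\beta$ with $\alpha'=(\nu_1,\dots,\nu_{i+1})$, $\beta=(\nu_{i+2},\dots,\nu_{\ell(\nu')})$, and put $\nu=\alpha\beta$ with $\alpha=(\nu_1,\dots,\nu_i)$. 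By Proposition \ref{cor:Chen for axis paths}, both $\AAA_{\nu',\le k}$ and $\AAA_{\nu,\le k}$ are obtained by applying one and the same projection to the Segre products $\AAA_{\alpha',\le k}\times\AAA_{\beta,\le k}$ and $\AAA_{\alpha,\le k}\times\AAA_{\beta,\le k}$; since $\AAA_{\alpha,\le k}=\AAA_{\alpha',\le k}$, the two Segre products agree, hence $\AAA_{\nu,\le k}=\AAA_{\nu',\le k}$ and in particular $\AAA_{\nu,k}=\U_{d,k}$. Replacing $\nu'$ by $\nu$ decreases the length by one and keeps $\max=d$ and the variety equal to $\U_{d,k}$; repeating terminates at a shape of length exactly $\dim\U_{d,k}$.

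The step I expect to be the crux is the middle-deletion argument: one must know that excising a redundant step is a \emph{local} operation that does not interfere with the tail $\beta$ of the shape, and the clean way to see this is precisely the factorisation through the Segre product furnished by Chen's identity (Proposition \ref{cor:Chen for axis paths}), together with the upgrade from the dimension equality in the prefix chain to an honest equality of the closed subvarieties $\AAA_{\alpha,\le k}$ and $\AAA_{\alpha',\le k}$, which uses their irreducibility. Keeping track that $\max(\nu)=d$ survives each deletion is a minor but genuinely needed point, handled by the observation that a truly new letter would create a new nonzero coordinate and hence would strictly enlarge the variety.
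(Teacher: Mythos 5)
Your proposal follows the same strategy as the paper: locate a redundant prefix step and excise it via the Segre factorisation supplied by Chen's identity (Proposition \ref{cor:Chen for axis paths}). You helpfully make explicit two points the paper leaves terse — the dimension/pigeonhole argument on the chain of prefix varieties $\AAA_{(\nu_1,\dots,\nu_j),\le k}$ that produces the redundant index $i$, and the check that the deleted letter must already occur earlier so that $\max(\nu)=d$ is preserved — but the overall argument is identical.
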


As an easy consequence, we notice that if $d=2$ there is only one possible shape $(1,2,1,2,\dots)$. Since it is unique, it satisfies the properties of Lemma \ref{lem:filling path} and therefore $\AAA_{\nu,k}$ has always the expected dimension.

Now, we look back to our first defective example. The reason why $\AAA_{(1,2,1,2,3),3}$ is defective is that there is a subsequence $(1,2,1,2)$ such that $\AAA_{(1,2,1,2),3}=\U_{2,3}$ but $\dim(\U_{2,3})=3$. This means that we are filling a 3-dimensional universal variety with a sequence of length four. 
%Let us finish this section with a conjecture that generalizes that phenomenon. 
We conjecture that this behavior is the only obstruction to having the expected dimension. 
\begin{conjecture}\label{con:nondef}
Let $\nu=(\nu_1,\dots,\nu_m)$ be a sequence with $d=\max(\nu)$. Then $\AAA_{\nu,k}$ is defective if and only if there is a subsequence $\nu^\prime=(\nu_i,\dots, \nu_{i+r})$ of $\nu$ with $d^\prime=\max(\nu^\prime)$, such that $\AAA_{\nu^\prime,k}=\U_{d^\prime,k}$ but $r+1>\dim(\U_{d^\prime,k})$.
\end{conjecture}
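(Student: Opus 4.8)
The plan is to prove the two implications separately: the forward ("if") direction follows quickly from lemmas already established, while the converse ("only if") direction is organized around the chain of prefix varieties, with one genuinely delicate step isolated.

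\textbf{The ``if'' direction.} Suppose $\nu$ contains a contiguous block $\nu'=(\nu_i,\dots,\nu_{i+r})$ with $\AAA_{\nu',k}=\U_{d',k}$, where $d'$ is the number of directions actually used by $\nu'$, and $r+1>\dim\U_{d',k}$. Then $\dim\AAA_{\nu',k}=\dim\U_{d',k}<r+1=\ell(\nu')$, so $g_{\nu',k}$ is not generically finite, i.e.\ $\nu'\in N_{d',k}$. Generic finiteness of $g_{\nu',k}$ is unaffected by enlarging the ambient dimension, since the extra target coordinates vanish identically on the image; hence $\nu'\in N_{d,k}$ for $d=\max(\nu)$ as well. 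Writing $\nu=\alpha\,\nu'\,\beta$ with $\alpha=(\nu_1,\dots,\nu_{i-1})$ and $\beta=(\nu_{i+r+1},\dots,\nu_m)$, two applications of Lemma \ref{lem:concatenation remains defective} — first to the split $\nu'\mid\beta$, then to $\alpha\mid\nu'\beta$ — yield $\nu\in N_{d,k}$, so $\AAA_{\nu,k}$ is defective.

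\textbf{The ``only if'' direction.} Assume $\AAA_{\nu,k}$ is defective. If $\nu$ has two equal consecutive entries $\nu_j=\nu_{j+1}$, then the block $(\nu_j,\nu_{j+1})$ has pattern $(1,1)$, and since the concatenation of two collinear segments is a single segment we get $\AAA_{(\nu_j,\nu_{j+1}),k}=\U_{1,k}$, of dimension $1<2$; this block is the witness. So assume $\nu$ is reduced. Consider the increasing chain of prefix varieties $\AAA_{\nu^{(0)},\le k}\subseteq\AAA_{\nu^{(1)},\le k}\subseteq\cdots\subseteq\AAA_{\nu^{(m)},\le k}$ with $\nu^{(j)}=(\nu_1,\dots,\nu_j)$; each inclusion holds by Chen's identity (Proposition \ref{cor:Chen for axis paths}) with the last segment length set to $0$, and each either strictly raises the dimension or is an equality, the latter because an irreducible closed subvariety of the same dimension as an irreducible ambient variety is the whole variety. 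Since $\AAA_{\nu,\le k}$ is defective, $\dim\AAA_{\nu,\le k}<\ell(\nu)$, so some inclusion is an equality; let $j$ be the smallest index with $\AAA_{\nu^{(j)},\le k}=\AAA_{\nu^{(j-1)},\le k}$, forcing $\dim\AAA_{\nu^{(j)},\le k}=j-1$. Letting $d_j$ be the number of directions in $\nu^{(j)}$, we have $j-1\le\dim\U_{d_j,\le k}$; the crux is to show that for reduced $\nu$ this must in fact be an equality, so that $\AAA_{\nu^{(j)},\le k}=\U_{d_j,\le k}$ — intuitively, a first ``wasted'' segment cannot occur before the relevant universal variety is saturated. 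Granting this, $\ell(\nu^{(j)})=j>j-1=\dim\U_{d_j,\le k}$, and since the $k$-th signature determines the lower ones generically up to finitely many choices, the contiguous block $\nu^{(j)}$ is the required witness.

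\textbf{Main obstacle.} The heart of the matter is exactly the crux step: excluding a reduced prefix $\nu^{(j-1)}$ whose signature variety is strictly smaller than the universal variety for its direction set yet happens to be closed under appending one further $e_{\nu_j}$-segment. By Chen's identity, appending a segment always moves the point, but showing it must genuinely enlarge the variety unless the universal variety is already reached appears to require either a careful rank analysis of the Jacobian of $g_{\nu,k}$ via Lemma \ref{lem:CombDescription}, or an inductive peeling of segments from the right that tracks which contiguous sub-blocks saturate. An alternative route is induction on $\ell(\nu)$ using the pruning technique of Lemma \ref{lem:filling path} to delete a wasted segment without changing $\AAA_{\nu,\le k}$; there the subtle points are the base case (when removing the single wasted segment destroys defectiveness — precisely when that segment closes a maximal saturating block, which is then the witness) and reassembling a contiguous block of $\nu$ after a deletion straddled by the block produced inductively.
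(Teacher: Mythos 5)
the statement you were asked to prove is labelled \emph{Conjecture}~\ref{con:nondef} in the paper. The authors explicitly present it as open (``We conjecture that this behavior is the only obstruction\ldots'' and, in Section~\ref{sub: axis toricness}, ``It would be very interesting to completely characterise it -- cf.~Conjecture~\ref{con:nondef}''). There is no proof in the paper to compare your attempt against, so a correct, complete argument here would be a new result rather than a re-derivation.

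Your \textbf{``if'' direction} is essentially fine and does not need new ideas: the condition $\AAA_{\nu',k}=\U_{d',k}$ together with $\ell(\nu')>\dim\U_{d',k}$ forces $g_{\nu',k}$ to have positive-dimensional generic fibre, so $\nu'\in N_{d',k}$; generic finiteness is intrinsic to $(\nu',k)$, so $\nu'\in N_{d,k}$ as well; and two applications of Lemma~\ref{lem:concatenation remains defective} to the decomposition $\nu=\alpha\,\nu'\,\beta$ propagate this to $\nu\in N_{d,k}$. This is a clean deduction from the lemmas already in the paper, and the paper itself gestures at it (the discussion preceding the conjecture explains the $(1,2,1,2,3)$ example in exactly these terms), so this half is not where the conjectural content lies.

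The \textbf{``only if'' direction} is where the statement is genuinely open, and that is also where your argument has the gap you yourself flag. After setting up the chain $\AAA_{\nu^{(0)},\le k}\subseteq\cdots\subseteq\AAA_{\nu^{(m)},\le k}$ of prefix varieties and locating the first index $j$ at which the dimension fails to rise, you need the conclusion $\AAA_{\nu^{(j)},\le k}=\U_{d_j,\le k}$. You supply no proof of this and candidly call it ``the crux.'' Nothing in the paper's lemmas gives it to you: Lemma~\ref{lem:filling path} shows such saturating prefixes \emph{exist}, Lemma~\ref{lem:adding a new letter does not affect defectiveness} lets you insert a \emph{fresh} letter, and Lemma~\ref{lem:concatenation remains defective} only transports defectiveness \emph{outward}. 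None of these rule out a reduced prefix whose signature variety stabilises strictly below the relevant universal variety. Absent that, the chain argument only shows that some prefix is ``wasted,'' not that the wasted prefix saturates $\U_{d_j,\le k}$, which is exactly what the conjecture asserts and exactly what makes it nontrivial. So the proposal is a reasonable plan with a correctly identified missing lemma, but it does not constitute a proof.

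One small caveat on the ``if'' side: the conjecture literally takes $d'=\max(\nu')$, whereas you interpret $d'$ as the number of distinct directions used by $\nu'$. These agree whenever $\AAA_{\nu',k}=\U_{d',k}$ can actually hold (a shape omitting a direction $i\le\max(\nu')$ lands in the hyperplane $x_i=0$ and cannot fill $\U_{\max(\nu'),k}$), so the discrepancy is harmless, but it is worth stating rather than silently substituting, especially since the whole point of the conjecture is a precise characterisation.
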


\subsection{Determinant of axis-parallel signatures}
Let us start with an observation about the entries of the $k$-th signature for a special family of axis-parallel paths.
\begin{lemma}\label{lem:lone entries are factors of slices}
	Let $k\in\N$ and let $X$ be an axis-parallel path with shape $\nu$ and sequence of length $(a_1,\dots,a_m)$. If an entry $\nu_i$ of $\nu$ appears only once in the $j$-th block of $\pi$, then $a_i$ divides all the entries of each $j$-th slice of $\sigma^{(k)}(X)$. If moreover $k=2$, then $a_i^2\mid\det(\sigma^{(2)}(X))$.
\begin{proof}
	If we look at one of the $j$-th slices, we are fixing one of the indices of $\sigma^{(k)}(X)$ to be $j$. By Lemma \ref{lem:CombDescription}, every monomial of the sum is a multiple of $a_i$. When $k=2$, we are dealing with a square matrix in which both the $j$-th row and the $j$-th column of $\sigma^{(2)}(X)$ are multiples of $a_i$. In order to conclude that $a_i^2\mid\det(\sigma^{(2)}(X))$, it is enough to check that $a_i^2$ divides the diagonal entry $\sigma^{(2)}(X)_{jj}$, and this follows from Lemma \ref{lem:CombDescription}.
\end{proof}
\end{lemma}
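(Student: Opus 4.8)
The plan is to read off everything from the combinatorial formula in Lemma~\ref{lem:CombDescription}. Write $\pi=\pi_\nu=\{\pi_1|\dots|\pi_d\}$ and suppose $\nu_i$ lies in the $j$-th block $\pi_j$ and is the unique index of $\nu$ with $\nu_i=j$ (equivalently $\pi_j=\{i\}$). First I would unravel what a ``$j$-th slice'' of $\sigma^{(k)}(X)$ is: it is the collection of entries $\sigma^{(k)}(X)_{i_1\dots i_k}$ in which at least one (for a general slice, a prescribed) index $i_l$ equals $j$. By Lemma~\ref{lem:CombDescription}, each such entry is a sum over non-decreasing sequences $(j_1,\dots,j_k)$ with $j_l\in\pi_{i_l}$; since $i_l=j$ forces $j_l\in\pi_j=\{i\}$, \emph{every} monomial $\frac{1}{s_1!\cdots s_m!}a_{j_1}\cdots a_{j_k}$ in the sum contains the factor $a_{j_l}=a_i$. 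Hence $a_i$ divides every entry of the slice, which is the first assertion. (One should note the sum may be empty, in which case the entry is $0$, and $a_i$ divides $0$; no issue there.)

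For the case $k=2$, the signature is the $d\times d$ matrix $\sigma^{(2)}(X)$, and the ``$j$-th slice'' in both the row and column direction tells us that the entire $j$-th row $\sigma^{(2)}(X)_{j\,\bullet}$ and the entire $j$-th column $\sigma^{(2)}(X)_{\bullet\, j}$ are divisible by $a_i$ by the first part. I would then expand $\det(\sigma^{(2)}(X))$ along the $j$-th row (or by the Leibniz formula): every term of the determinant picks up exactly one entry from row $j$, so $a_i\mid\det(\sigma^{(2)}(X))$ immediately, and every term that does not use the diagonal entry $\sigma^{(2)}(X)_{jj}$ picks up a second entry from column $j$ and is therefore divisible by $a_i^2$. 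The only term left to control is the one containing $\sigma^{(2)}(X)_{jj}$; it suffices to show $a_i^2\mid\sigma^{(2)}(X)_{jj}$. But $\sigma^{(2)}(X)_{jj}$ is exactly the entry with $i_1=i_2=j$, so by Lemma~\ref{lem:CombDescription} the only admissible non-decreasing sequence is $(j_1,j_2)=(i,i)$, giving $\sigma^{(2)}(X)_{jj}=\tfrac{1}{2}a_i^2$ (with $s_i=2$), which is visibly divisible by $a_i^2$. Combining, every term of the Leibniz expansion of $\det(\sigma^{(2)}(X))$ is divisible by $a_i^2$, hence so is the determinant.

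There is essentially no hard step here: the proof is a direct substitution into Lemma~\ref{lem:CombDescription} followed by a cofactor/Leibniz expansion of a determinant. The only point that needs a moment's care is the bookkeeping in the determinant argument---isolating the diagonal term and checking that it alone, rather than all terms, needs the separate computation $\sigma^{(2)}(X)_{jj}=\tfrac12 a_i^2$. I would present it in exactly that order: (1) identify slices and apply Lemma~\ref{lem:CombDescription} to get $a_i\mid$ each slice entry; (2) specialize to $k=2$, noting row $j$ and column $j$ are multiples of $a_i$; (3) split the Leibniz expansion into the diagonal term and the rest, handle the rest by the two-factor argument and the diagonal term by the explicit formula $\sigma^{(2)}(X)_{jj}=\tfrac12 a_i^2$; (4) conclude $a_i^2\mid\det(\sigma^{(2)}(X))$.
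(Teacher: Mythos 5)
Your proposal is correct and follows essentially the same route as the paper's own proof: read the divisibility of slice entries directly off Lemma~\ref{lem:CombDescription}, then observe that row $j$ and column $j$ are multiples of $a_i$, and reduce the determinant claim to the explicit computation $\sigma^{(2)}(X)_{jj}=\tfrac12 a_i^2$. You merely spell out the Leibniz-expansion bookkeeping that the paper leaves implicit in the phrase ``it is enough to check.''
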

%\begin{corollary}\label{cor:lone entries are  factors of the determinant}Let $X$ be an axis-parallel path of shape $\nu$ and sequence of length $(a_1,\dots,a_m)$. If an entry $\nu_i$ of $\nu$ appears only once in the $j$-th block of $\pi$, then $a_i^2\mid \det(\sigma^{(2)}(X))$.\begin{proof}
%	By Lemma \ref{lem:lone entries are factors of slices}, $a_i$ is a factor of each entry of both the $j$-th row and the $j$-th column of $\sigma^{(2)}(X)$. In order to conclude, we only need to show that $a_i^2$ is a factor of $\sigma^{(2)}(X)_{jj}$, and this follows from Lemma \ref{lem:CombDescription}.\end{proof}\end{corollary}
This is just a special case of a much more general phenomenon, corroborated by many experiments with Sage \cite{sage} and the code included in \cite{CGM19}. The determinant of the second signature of an axis-parallel path is always the square of a polynomial in $a_1,\dots,a_m$. This subsection is devoted to explaining this result and its consequences. In order to correctly state it, we need some definitions. 
\begin{definition}\label{def:good shape}
For any shape $\nu=(\nu_1,\dots,\nu_m)$ with $d=\max \nu$, we say that a subsequence $\mu=(\nu_{i_1},\dots,\nu_{i_d})$ is a \emph{good subshape} if $i_1<\dots <i_d$ and $\{\nu_{i_1},\dots,\nu_{i_d}\}=\{1,\dots, d\}$. Moreover, we define the \emph{sign} of a good subshape, $\sgn\mu$, as the sign of the permutation $(\nu_{i_1},\dots,\nu_{i_d})\in S_d$.
\end{definition}
Without loss of generality we assumed that $\{1,\dots,d\}=\{\nu_1,\dots,\nu_m\}$, so good subshapes always exist.
\begin{definition}\label{def:Pdet}
Let $X$ be an axis path of shape $\nu$ and let $a_i$ be the parameter associated to $\nu_i$. We define the degree $d$ homogeneus polynomial $P(a)\in \CC[a_1,\dots,a_m]$ by 
$$P(a):=\sum_{\mu=(\nu_{i_1},\dots,\nu_{i_d})}(\sgn \mu) \prod_{j=1}^d a_{i_j},$$ 
where the sum is taken over all good subshapes $\mu$ of $\nu$. Moreover, we denote  by $\det_2 \nu\in \CC[a_1,\dots,a_m]$ the determinant $\det\left(\sigma^{(2)}(X)\right)$. By Lemma \ref{lem:CombDescription}, it is homogeneous of degree $2d$.
\end{definition}

\begin{theorem}\label{thm:det is a square}
With the notation from Definition \ref{def:Pdet}, we have $$2^d\det\left(\sigma^{(2)}(X)\right)=P(a)^2.$$
\end{theorem}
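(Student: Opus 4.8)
The plan is to write $2^{d}\det\bigl(\sigma^{(2)}(X)\bigr)$ as the square of a Pfaffian and then to evaluate that Pfaffian explicitly, matching it with $P(a)$. Write $\pi=\pi_\nu$ for the partition of $\{1,\dots,m\}$ induced by $\nu$ and set $b_p:=\sum_{i\in\pi_p}a_i$, the total displacement of $X$ in direction $p$. By Lemma~\ref{lem:CombDescription}, $\sigma^{(2)}(X)_{pq}=\sum_{i\in\pi_p,\,j\in\pi_q,\,i<j}a_ia_j$ when $p\neq q$ and $\sigma^{(2)}(X)_{pp}=\tfrac12 b_p^{2}$; equivalently $\sigma^{(2)}(X)+\sigma^{(2)}(X)^{T}=bb^{T}$ (this is also the $k=2$ case of the shuffle identity $\sigma^{(1)}_p\sigma^{(1)}_q=\sigma^{(2)}_{pq}+\sigma^{(2)}_{qp}$ together with $\sigma^{(1)}_p=b_p$). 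Hence $\sigma^{(2)}(X)=\tfrac12 bb^{T}+A$, where $A:=\tfrac12\bigl(\sigma^{(2)}(X)-\sigma^{(2)}(X)^{T}\bigr)$ is antisymmetric with $2A_{pq}=\sum_{i\in\pi_p,\,j\in\pi_q}\sgn(j-i)\,a_ia_j$, and therefore $2^{d}\det\sigma^{(2)}(X)=\det(2\sigma^{(2)}(X))=\det\bigl(bb^{T}+2A\bigr)$.

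Next I would turn this determinant into a Pfaffian square, splitting on the parity of $d$. If $d$ is even, the adjugate of the even-sized antisymmetric matrix $2A$ is again antisymmetric, so the rank-one update formula gives $\det(bb^{T}+2A)=\det(2A)+b^{T}\operatorname{adj}(2A)\,b=\det(2A)=\Pf(2A)^{2}$. If $d$ is odd, then $\det(2A)=0$; forming the $(d+1)\times(d+1)$ antisymmetric matrix $\widehat N:=\left(\begin{smallmatrix}2A & b\\ -b^{T} & 0\end{smallmatrix}\right)$ and using the bordered-determinant identity $\det\left(\begin{smallmatrix}M & b\\ -b^{T} & 0\end{smallmatrix}\right)=b^{T}\operatorname{adj}(M)\,b$, one gets $\det(bb^{T}+2A)=\det(2A)+b^{T}\operatorname{adj}(2A)\,b=\det(\widehat N)=\Pf(\widehat N)^{2}$. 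In both cases $2^{d}\det\sigma^{(2)}(X)=\Pf(N^{*})^{2}$ for an explicit antisymmetric matrix $N^{*}$ (namely $N^{*}=2A$ for $d$ even and $N^{*}=\widehat N$ for $d$ odd), so it remains to prove $\Pf(N^{*})=\pm P(a)$.

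For this last step I would expand the Pfaffian combinatorially. Every off-diagonal entry of $N^{*}$ is a sum of monomials: an entry $2A_{pq}$ equals $\sum_{i\in\pi_p,j\in\pi_q}\sgn(j-i)a_ia_j$, and (in the odd case) a border entry equals $\sum_{i\in\pi_p}a_i$. Writing $\Pf(N^{*})=\sum_\rho\sgn(\rho)\prod_{\text{pairs of }\rho}N^{*}_{\cdot\cdot}$ over perfect matchings $\rho$ and distributing these sums, each resulting monomial selects exactly one index $c_p\in\pi_p$ from every block, i.e.\ a tuple $c\in\pi_1\times\cdots\times\pi_d$. Grouping by $c$ yields
\[
\Pf(N^{*})=\sum_{c\in\pi_1\times\cdots\times\pi_d}\Bigl(\prod_{p=1}^{d}a_{c_p}\Bigr)\,\Pf(R^{c}),
\]
where $R^{c}$ is the antisymmetric matrix with $R^{c}_{pq}=\sgn(c_q-c_p)$, of size $d$ for $d$ even and of size $d+1$ for $d$ odd with an extra index $\ast$ that behaves as $+\infty$ (so $R^{c}_{p\ast}=1$). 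Since the $c_p$ are pairwise distinct, $R^{c}$ is obtained from the "sign matrix" $R^{0}$ (with $(R^{0})_{xy}=\sgn(y-x)$, of the appropriate size) by conjugating by the permutation matrix of the ranking permutation $\tau_c$ of $c$, so $\Pf(R^{c})=\sgn(\tau_c)\,\Pf(R^{0})$; and $\Pf(R^{0})=1$ for every even size, a classical evaluation (e.g.\ by a one-step Laplace expansion of the Pfaffian along the first row, which reduces the size by $2$). Hence $\Pf(N^{*})=\sum_c\sgn(\tau_c)\prod_p a_{c_p}$, and since the tuples $c$ correspond bijectively to the good subshapes $\mu$ of $\nu$ (sort $\{c_1,\dots,c_d\}$) in a way that identifies $\prod_p a_{c_p}$ with the monomial of $\mu$ and $\sgn(\tau_c)$ with $\sgn\mu$, we conclude $\Pf(N^{*})=P(a)$ and $2^{d}\det\sigma^{(2)}(X)=P(a)^{2}$. (If one only reads off $\Pf(R^{0})=\pm1$ a priori, the overall sign is pinned down by the test case $\nu=(1,2,\dots,d)$, $a=(1,\dots,1)$, where $\sigma^{(2)}(X)$ is upper triangular with $\tfrac12$ on the diagonal and $P(a)=1$, so $2^{d}\det\sigma^{(2)}(X)=1=P(a)^{2}$.)

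The conceptual steps are short; the main obstacle is the bookkeeping in the last step — keeping precise track of the Pfaffian signs as the inner sums are distributed, verifying that the coefficient attached to each tuple $c$ is exactly $\Pf(R^{c})$ (with the correct treatment of the border index $\ast$ in the odd case), and confirming the evaluation $\Pf(R^{0})=1$. Handling the even and odd parities of $d$ in a uniform fashion, rather than as two genuinely different arguments, is the other delicate point I would want to present cleanly.
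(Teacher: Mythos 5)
Your proof is correct, and it takes a genuinely different route from the paper's.

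The paper's argument is a chain of combinatorial reductions followed by an induction. It first shows that one may assume each letter of $\nu$ appears at most twice (Lemma~\ref{lem: no triple entries}), then works off the non-repeated letters one at a time by comparing coefficients of the distinguished monomial $M_\nu$ on both sides (Lemmas~\ref{lem:nonrepeated entry P} and~\ref{lem:nonrepeated entry det}, the latter via a graph-theoretic reading of the Laplace expansion), and finally, in the remaining case where every letter appears exactly twice, evaluates both sides at the special point $q\in\{\pm1\}^{2m}$, observes that $\sigma^{(2)}(q)$ is skew-symmetric, and proves $P(q)=2^{d/2}\Pf(\sigma^{(2)}(q))$ by induction on adjacent transpositions bringing $\nu$ to $1122\cdots dd$.

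You instead exploit the structural identity $\sigma^{(2)}(X)+\sigma^{(2)}(X)^{T}=bb^{T}$ coming from the shuffle relation, write $2\sigma^{(2)}(X)=bb^{T}+2A$ with $A$ skew-symmetric, and reduce directly to a Pfaffian via the rank-one update $\det(M+bb^{T})=\det M+b^{T}\operatorname{adj}(M)\,b$ (splitting on the parity of $d$, with the bordered skew matrix $\widehat N$ in the odd case). Expanding the Pfaffian, distributing the inner sums over blocks of $\pi_\nu$, and recognizing that the coefficient attached to each tuple $c$ is $\Pf(R^{c})=\pm1$ (a conjugated sign matrix, with the known evaluation $\Pf(R^{0})=1$) produces $P(a)$ term by term via the bijection with good subshapes. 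This avoids all the reduction lemmas and the transposition induction, and the only non-generic computations are the Pfaffians of the fixed sign matrices $R^{0}$. The paper's route is more elementary — nothing beyond Laplace expansion and sign-chasing — while yours is shorter, more conceptual, and makes the role of the skew part of the signature matrix and of $\inv_d$ transparent; in particular it essentially re-derives Corollary~\ref{cor:shuffle determinant is a shuffle square} along the way rather than deducing it afterwards. The one place to be meticulous is precisely the one you flag: pinning down the global sign for odd $d$ and verifying $\Pf(R^{0})=1$ by a clean induction (Laplace along the first row works, as you note), but your test case $\nu=(1,\dots,d)$, $a=(1,\dots,1)$ suffices to fix it.
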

The proof we found is quite technical and we present it in Appendix \ref{app:proof}. However, Theorem \ref{thm:det is a square} has several interesting consequences. Since we can write $\U_{d,2}$ as an axis paths variety, we now know that the determinant of the signature matrix of any path is a square. In particular, if $X$ is any path, then $\det\sigma^{(2)}(X)\ge 0$. 
It is even more interesting to think in terms of the shuffle identity.
\begin{example}\label{example:determinant 2x2 is a shuffle square} Let $X\colon [0,1]\to\R^2$ be any path. By Lemma \ref{lem:shuffle identity},
\begin{align*}
\det(\sigma^{(2)}(X))&=\det\left( \begin{matrix}
\langle \sigma(X), 11\rangle &\langle \sigma(X), 12\rangle\\
\langle \sigma(X), 21\rangle &\langle \sigma(X), 22\rangle
\end{matrix}\right)\\
& = \langle \sigma(X), 11\rangle\cdot\langle \sigma(X), 22\rangle-\langle \sigma(X), 12\rangle \cdot\langle \sigma(X), 21\rangle\\
& = \langle \sigma(X), 11\shuffle 22-12\shuffle 21\rangle\\
& = \frac{1}{4}\langle \sigma(X), (12-21)^{\shuffle 2}\rangle.
\end{align*}
\end{example}
In this case it was not too difficult to realize that $4(11\shuffle 22-12\shuffle 21)=(12-21)^{\shuffle 2}$ and therefore that $\det(\sigma^{(2)}(X))$ is a square. %For $d=3$, an explicit computation shows that if $X:[0,1]\to\R^3$ is any path, then$\det\sigma^{(2)}(X)=\langle \sigma(X), (123-132+231-213+312-321)^{\shuffle 2}\rangle$. 
Note that $12-21$ is twice the L\'evy area defined by the path. This is a general behavior.
 \begin{definition}
	Let $S_d$ be the symmetric group in $d$ elements. Define
	\[
	\inv_d=\sum_{\rho\in S_d}\sgn(\rho)\rho(1)\dots\rho(d)\in T(\R^d).\]
	It is the sum of $d!$ words of length $d$, so it is a degree $d$ element of the shuffle algebra.
\end{definition}
On one hand, this has a geometric meaning: as illustrated in \cite[Section 3.2]{DR18}, $\inv_d$ has an interpretation in terms of the \emph{signed volume} of the convex hull of the path. On the other hand, there is a relation with signatures.
\begin{lemma}\label{lem: invariant and good underlinings}
	Let $a=(a_1,\dots,a_m)\in\R^m$ and let $X\colon [0,1]\to\R^d$ be the axis path corresponding to the sequence of lengths $a$ and shape $\nu$. Then $P(a)=\langle\sigma(X),\inv_d\rangle$.
	\begin{proof}
		By Lemma \ref{lem:CombDescription}, if  $i_1,\dots, i_d$ are pairwise disjoint then $\langle\sigma(X),i_1\dots i_d\rangle$ equals the sum of products $\prod_{l=1}^d a_{j_l}$ where $\nu_{j_l}=i_l$ and $j_1<\dots<j_d$. In particular, it is a subsum in the definition of $P(a)$ corresponding to the good subshapes associated to the permutation $(i_1,\dots,i_d)$. Summing up over all possible permutations $(i_1,\dots,i_d)$ with signs we obtain the statement of the lemma.
	\end{proof}
\end{lemma}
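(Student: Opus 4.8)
The plan is to evaluate $\langle\sigma(X),\inv_d\rangle$ directly via the combinatorial signature formula of Lemma \ref{lem:CombDescription} and to recognize the outcome as the defining sum of $P(a)$. First I would unpack both sides. By definition $\inv_d=\sum_{\rho\in S_d}\sgn(\rho)\,\rho(1)\cdots\rho(d)$ is a linear combination of the $d!$ length-$d$ words in which each letter of $\{1,\dots,d\}$ appears exactly once, so $\langle\sigma(X),\inv_d\rangle=\sum_{\rho\in S_d}\sgn(\rho)\,\sigma(X)_{\rho(1)\cdots\rho(d)}$. On the other side, $P(a)$ is indexed by the good subshapes of $\nu$, i.e.\ by the strictly increasing position tuples $i_1<\dots<i_d$ with $\{\nu_{i_1},\dots,\nu_{i_d}\}=\{1,\dots,d\}$, each weighted by $\sgn\mu=\sgn(\nu_{i_1},\dots,\nu_{i_d})$ and contributing $\prod_{j=1}^d a_{i_j}$. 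So it is enough to match, permutation by permutation, the term $\sgn(\rho)\,\sigma(X)_{\rho(1)\cdots\rho(d)}$ with the portion of $P(a)$ coming from good subshapes whose underlying permutation is $\rho$.

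Next I would apply Lemma \ref{lem:CombDescription} with $k=d$ to a word $w=\ell_1\cdots\ell_d$ that is a permutation of $\{1,\dots,d\}$, obtaining $\sigma(X)_w=\sum_{(j_1,\dots,j_d)}\frac{1}{s_1!\cdots s_m!}\,a_{j_1}\cdots a_{j_d}$, the sum over non-decreasing tuples with $j_t\in\pi_{\ell_t}$. The key point is that since $\ell_1,\dots,\ell_d$ are pairwise distinct, the blocks $\pi_{\ell_1},\dots,\pi_{\ell_d}$ of $\pi_\nu$ are pairwise disjoint; hence the $j_t$ are automatically pairwise distinct, a non-decreasing such tuple is strictly increasing, every multiplicity $s_t$ is $0$ or $1$, and the coefficient $\frac{1}{s_1!\cdots s_m!}$ equals $1$. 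Thus $\sigma(X)_w=\sum a_{j_1}\cdots a_{j_d}$ over all $j_1<\dots<j_d$ with $\nu_{j_t}=\ell_t$ for every $t$, which is precisely the sum of $\prod_t a_{j_t}$ over the good subshapes $(\nu_{j_1},\dots,\nu_{j_d})$ realizing the permutation $(\ell_1,\dots,\ell_d)$.

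Finally I would sum over all $\rho\in S_d$ with signs: since every good subshape of $\nu$ has a well-defined underlying permutation of $\{1,\dots,d\}$, and $\sgn\mu$ agrees with that permutation's sign by Definition \ref{def:good shape}, we obtain $\langle\sigma(X),\inv_d\rangle=\sum_{\rho\in S_d}\sgn(\rho)\,\sigma(X)_{\rho(1)\cdots\rho(d)}=\sum_{\mu\text{ good subshape}}\sgn(\mu)\prod_{j=1}^d a_{i_j}=P(a)$. There is no genuinely hard step here; the only things that need care are the disjointness of the blocks $\pi_{\ell_t}$ --- which simultaneously kills the multinomial denominators and upgrades ``non-decreasing'' to ``strictly increasing'' --- and checking that the sign conventions in the definition of $\inv_d$ and in Definition \ref{def:Pdet} are the same. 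I expect any remaining friction to be purely notational, namely keeping the ``letters of a word'' indices separate from the ``positions along $\nu$'' indices.
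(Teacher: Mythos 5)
Your proposal is correct and follows essentially the same route as the paper's own proof: apply Lemma \ref{lem:CombDescription} to each permutation word $\rho(1)\cdots\rho(d)$, observe that the resulting sum ranges exactly over the good subshapes realizing that permutation, and then sum over $\rho\in S_d$ with signs. The paper's proof is terser, but the observations you make explicit --- that disjointness of the blocks $\pi_{\ell_t}$ forces the $j_t$ to be distinct, upgrades non-decreasing to strictly increasing, and trivializes the multinomial factors --- are exactly what the paper leaves implicit, so there is no substantive difference.
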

It follows that Theorem \ref{thm:det is a square} has an important consequence on the shuffle algebra.
\begin{corollary}\label{cor:shuffle determinant is a shuffle square}
%	Let $\mathcal{S}=(T(\R^d),\shuffle,e)$ be the shuffle algebra. Let
	Consider the matrix
	$$A=\left( \begin{matrix}
	11 & 12 &\dots &1d\\21 & 22 &\dots &2d\\
	\vdots & \vdots &\ddots &\vdots\\
	d1 & d2 &\dots &dd\\
	\end{matrix}\right) %\in\mathcal{S}\otimes\mathcal{S}.
	$$with coefficients in the shuffle algebra $(T(\R^d),\shuffle,e)$. Let $\det_\shuffle(A)$ be its determinant, where the product is the shuffle. 
Then $2^d\det_\shuffle(A)=(\inv_d)^{\shuffle 2}$.
\begin{proof}
	It is enough to prove that $\langle T,(\inv_d)^{\shuffle 2}\rangle=2^d \langle T,\det_{\shuffle}(A)\rangle$ for every $T\in T((\R^d))$. Since the linear span of $\G(\R^d)$ is the whole $T((\R^d))$, by linearity we just have to show that such equality holds when $T\in\G(\R^d)$. Let us pick then $T\in\G(\R^d)$. Then there is an axis path $X$, corresponding to the sequence of lengths $a\in\R^m$, such that $\langle T,(\inv_d)^{\shuffle 2}\rangle=\langle \sigma(X),(\inv_d)^{\shuffle 2}\rangle$ and  $\langle T,\det_{\shuffle}(A)\rangle=\langle\sigma(X),\det_{\shuffle}(A)\rangle$. By Theorem \ref{thm:det is a square} and Lemma \ref{lem: invariant and good underlinings},
	\[
	2^d\langle \sigma(X),\left.\det\right._{\shuffle}(A)\rangle=2^d\det(\sigma^{(2)}(X))=P(a)^2=\langle \sigma(X),(\inv_d)^{\shuffle 2}\rangle.\qedhere\]
\end{proof}
\end{corollary}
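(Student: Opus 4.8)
The plan is to bootstrap the purely algebraic identity in the shuffle algebra from its evaluation on signatures of axis paths, using Theorem~\ref{thm:det is a square} as the sole nontrivial input. Set
\[
\omega:=2^d\det_\shuffle(A)-(\inv_d)^{\shuffle 2}\in T(\R^d),
\]
a homogeneous element of degree $2d$; the goal is to show $\omega=0$. Since $\omega$ is fixed, the map $T\mapsto\langle T,\omega\rangle$ is \emph{linear} on $T((\R^d))$, so it suffices to prove $\langle T,\omega\rangle=0$ for every $T$ in a spanning set of $T((\R^d))$ — and, as recalled after the definition of $\G(\R^d)$, the group-like tensors span $T((\R^d))$ (equivalently, they span each graded piece $(\R^d)^{\otimes j}$, by the Poincar\'e--Birkhoff--Witt description of the tensor algebra). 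Thus I would reduce to checking $2^d\langle T,\det_\shuffle(A)\rangle=\langle T,(\inv_d)^{\shuffle 2}\rangle$ for $T\in\G(\R^d)$.

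Next I realise group-like tensors by axis paths. Both pairings above depend only on the truncation $\pi_{\le 2d}(T)$, because $\det_\shuffle(A)$ and $(\inv_d)^{\shuffle 2}$ are homogeneous of degree $2d$. By Remark~\ref{rmk:fill the universal} there is a shape $\nu$ with $\max(\nu)=d$ and $\AAA_{\nu,2d}=\U_{d,2d}$, so the $2d$-signatures of axis paths of shape $\nu$ are Zariski dense in $\U_{d,2d}$, which contains $\pi_{2d}(T)$ for every $T\in\G(\R^d)$. As $\langle\cdot,\omega\rangle$ is linear (hence a polynomial function on the ambient tensor space), it is enough to verify the identity on $\sigma(X)$ for a genuine axis path $X$ of shape $\nu$ with sequence of lengths $a=(a_1,\dots,a_m)$. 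For such an $X$ I would expand both sides using the shuffle identity (Lemma~\ref{lem:shuffle identity}): iterating it on the shuffle product expansion of the determinant gives
\[
\langle\sigma(X),\det_\shuffle(A)\rangle=\sum_{\rho\in S_d}\sgn(\rho)\prod_{i=1}^d\langle\sigma(X),i\,\rho(i)\rangle=\sum_{\rho\in S_d}\sgn(\rho)\prod_{i=1}^d\sigma^{(2)}(X)_{i\,\rho(i)}=\det\sigma^{(2)}(X),
\]
while $\langle\sigma(X),(\inv_d)^{\shuffle 2}\rangle=\langle\sigma(X),\inv_d\rangle^2$, and Lemma~\ref{lem: invariant and good underlinings} identifies $\langle\sigma(X),\inv_d\rangle$ with the polynomial $P(a)$ of Definition~\ref{def:Pdet}. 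Theorem~\ref{thm:det is a square} then yields $2^d\det\sigma^{(2)}(X)=P(a)^2$, which is exactly $2^d\langle\sigma(X),\det_\shuffle(A)\rangle=\langle\sigma(X),(\inv_d)^{\shuffle 2}\rangle$.

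Assembling: the identity holds on the Zariski-dense family of axis-path signatures of shape $\nu$, hence (being a linear, a fortiori polynomial, condition) on all of $\U_{d,2d}$, hence on $\pi_{2d}(\G(\R^d))$, hence on $\G(\R^d)$; by linearity and the spanning property it holds on $T((\R^d))$, so $\omega=0$, i.e.~$2^d\det_\shuffle(A)=(\inv_d)^{\shuffle 2}$. The only real content of this argument is the black box Theorem~\ref{thm:det is a square} (proved in the appendix), which carries the combinatorics; within the reduction itself the one point to be careful about is that all the relevant data live in degrees $\le 2d$, so that replacing axis-path signatures by their Zariski closure $\U_{d,2d}$ — and likewise passing from $\AAA_{\nu,2d}$ to the universal variety — costs nothing. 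I expect no genuine obstacle in the reduction; the effort is entirely in the cited theorem.
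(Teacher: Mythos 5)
Your proposal is correct and follows essentially the same route as the paper: reduce by linearity to group-like $T$, realize the relevant truncation via (signatures of) axis paths, and invoke Theorem \ref{thm:det is a square} together with Lemma \ref{lem: invariant and good underlinings}. The only difference is that you spell out the Zariski-density-plus-linearity step justifying the passage from arbitrary group-like elements to actual axis-path signatures, which the paper's proof states without elaboration.
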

%Since the shuffle algebra is commutative, the element $v$ is unique up to a sign. Since it is the only degree $d$ invariant of $\mathcal{S}$ under the action of $\GL(\R^d)$, it is denoted by $\inv_d$ in \cite{DR18}.

Even though the techniques we use are based on the combinatorics of $\AAA_{\nu,k}$, Corollary \ref{cor:shuffle determinant is a shuffle square} has nothing to do with axis paths, nor with signatures at all. It would be very interesting to find a generalization for $k>2$. In our opinion, this can give new insight on the properties of the shuffle algebra.

\appendix\label{sec:appendix}
%\section{Toric geometry}\label{app:toric geometry}
%In this appendix we recall the basic methods of toric geometry used in the article. We do it for convenience of the reader referring for a 

%more comprehensive description of the results to \cite{CLS11, BerndBook, Fulton, Oda, topicsonTV}.

\section{Proof of Theorem \ref{thm:det is a square}}\label{app:proof}
For this proof we fix an axis path $X$ with shape $\nu\in\R^m$ and sequence of lengths $a\in\R^m$. The first step will be to reduce the problem to the case in which every entry of $\nu$ appears at most twice. Let us start with an observation.
\begin{remark}\label{rmk: we can relabel}
	The symmetric group $S_d$ acts on $\R^d$ by permuting the basis elements. In the same way, it acts on the sequence $\nu$, on the path $X$ and thus on the polynomials $P$ and $\det_2\nu$. The determinant is invariant% with respect to that action, as simultaneous permutations of rows and columns does not change the sign. On the other hand
	, while the sign of $P$ changes according to the sign of the permutation. However, $P^2$ is invariant. This means that we are allowed to relabel the entries of $\nu$. In other words, we can always pick $\sigma\in S_d$ and replace $(\nu_1,\dots,\nu_m)$ with $(\sigma(\nu_1),\dots,\sigma(\nu_m))$.
\end{remark}
In order to prove that two polynomials are equal, it is enough to prove that they have the same coefficient on each monomial. 
\begin{notation}
For a polynomial $Q$ and a monomial $M$ let $Q_{|M}$
be the coefficient of $Q$ corresponding to $M$. 
\end{notation}
Since both $P^2$ and $\det_2\nu$ are homogeneous of degree $2d$, we only have to take care of degree $2d$ monomials.

\begin{lemma}\label{lem: no triple entries}
	Suppose that there are three (not necessarily distinct) indices $i,j,l\in\{1,\dots,m\}$ such that $\nu_i=\nu_j=\nu_l$.
	\begin{enumerate}
		\item Let $M\in\C[a_1,\dots,a_m]_{2d}$ be such a monomial that $a_ia_ja_l\mid M$. Then $P^2_{|M}=(\det_2\nu)_{|M}=0$.
		\item Let $\nu'$ be the sequence obtained from $\nu$ by removing $\nu_l$ and let $P'$ be the associated polynomial. If $M\in\C[a_1,\dots,\hat{a_l},\dots,a_m]_{2d}$ is a monomial, then $P'_{|M}=P_{|M}$ and $\det_{2}\nu_{|M}=\det_{2}\nu'_{|M}$.
	\end{enumerate}
	\begin{proof}
		Thanks to remark \ref{rmk: we can relabel}, we may assume $\nu_i=\nu_j=\nu_l=1$. By Lemma \ref{lem:CombDescription}, the variables $a_i,a_j,a_l$ only appear in the first row and in the first column of the signature matrix. Furthermore, among the monomials of the diagonal entry $\sigma(X)_{11}$ there are degree at most 2 monomials in the variables $a_i,a_j,a_l$, while away from the diagonal all monomials contain only one of these variables, with exponent 1. Therefore $a_ia_ja_l$ does not divide any monomial appearing in $\det_2\nu$. On the other hand, every good subshape $\mu$ of $\nu$ contains the entry 1 exactly once, so each monomial of $P$ contains at most one among $a_i,a_j,a_l$, and with exponent 1. In $P^2$ there can be monomials containing the product of two among $a_i,a_j,a_l$, but not containing all three of them. The second statement follows from the definitions of $P$ and $\det_2\nu$.
	\end{proof}
\end{lemma}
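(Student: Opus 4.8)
The plan is to prove both parts by degree counts read off from Lemma~\ref{lem:CombDescription}, after invoking Remark~\ref{rmk: we can relabel} to assume $\nu_i=\nu_j=\nu_l=1$. Set $\pi=\pi_\nu$; call $a_p$ a \emph{first-block variable} if $p\in\pi_1$, and note that $a_i,a_j,a_l$ are all first-block variables. Specializing Lemma~\ref{lem:CombDescription} to $k=2$, one reads off the structure of $\sigma^{(2)}(X)$: the entry $\sigma^{(2)}(X)_{11}$ is homogeneous of degree $2$ in the first-block variables; each off-diagonal entry $\sigma^{(2)}(X)_{1b}$ or $\sigma^{(2)}(X)_{a1}$ with $a,b\neq 1$ is homogeneous of degree $1$ in the first-block variables; and every entry $\sigma^{(2)}(X)_{ab}$ with $a,b\neq 1$ involves no first-block variable at all. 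In the expansion $\det\bigl(\sigma^{(2)}(X)\bigr)=\sum_{\rho\in S_d}\sgn(\rho)\prod_t\sigma^{(2)}(X)_{t,\rho(t)}$, the only factors of a given term that can carry first-block variables are the one from the first row, $\sigma^{(2)}(X)_{1,\rho(1)}$, and the one from the first column, $\sigma^{(2)}(X)_{\rho^{-1}(1),1}$ (equal when $\rho(1)=1$). Whether or not $\rho(1)=1$, their product has degree at most $2$ in the first-block variables, so no term of $\det_2\nu$ is divisible by $a_ia_ja_l$, whose degree in the first-block variables is $3$ regardless of coincidences among $i,j,l$. Hence $(\det_2\nu)_{|M}=0$ whenever $a_ia_ja_l\mid M$.

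For $P$, each good subshape $\mu=(\nu_{i_1},\dots,\nu_{i_d})$ of $\nu$ (with $i_1<\dots<i_d$) uses the value $1$ exactly once, so exactly one factor among $a_{i_1},\dots,a_{i_d}$ is a first-block variable; thus every monomial of $P$ has degree exactly $1$ in the first-block variables, every monomial of $P^2$ has degree at most $2$ in them, and again divisibility by $a_ia_ja_l$ is impossible. This proves $P^2_{|M}=0$ and completes part~(1).

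For part~(2), still assuming $\nu_i=\nu_j=\nu_l=1$, let $X'$ be an axis path of shape $\nu'$, with $\pi'=\pi_{\nu'}$ obtained from $\pi$ by deleting $l$ from the first block (all other blocks unchanged), and identify the parameters of $X'$ with $(a_p)_{p\neq l}$ via the order-preserving bijection between the positions of $\nu'$ and the positions of $\nu$ other than $l$. Comparing the formulas of Lemma~\ref{lem:CombDescription} entry by entry gives $\sigma^{(2)}(X)_{ab}=\sigma^{(2)}(X')_{ab}$ for $a,b\neq 1$, while $\sigma^{(2)}(X)_{11}$ and the entries in the first row and first column differ from their primed counterparts exactly by the terms of the defining sum whose first-block index is $l$, all of which are divisible by $a_l$. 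Thus $\sigma^{(2)}(X)=\sigma^{(2)}(X')+E$, where $E$ is supported on the first row and first column and has every entry divisible by $a_l$; expanding $\det\bigl(\sigma^{(2)}(X')+E\bigr)$ and isolating the unique term in which no entry of $E$ is used shows $\det_2\nu-\det_2\nu'$ is divisible by $a_l$, hence $(\det_2\nu)_{|M}=(\det_2\nu')_{|M}$ for $M$ free of $a_l$. For $P$ and $P'$: the same order-preserving bijection identifies the good subshapes of $\nu'$ with the good subshapes of $\nu$ that do not use position $l$, preserving the underlying permutation, hence the sign, and the associated monomial; the good subshapes of $\nu$ that do use position $l$ contribute only monomials divisible by $a_l$. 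Therefore $P$ and $P'$ agree on every monomial free of $a_l$, which proves part~(2).

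I do not expect a genuine obstacle here: the argument is a degree count in the first-block variables together with a divisibility-by-$a_l$ bookkeeping. The only points requiring care are keeping the degree count honest when $i,j,l$ are not pairwise distinct, and tracking the reindexing $\nu\mapsto\nu'$ so that the comparisons of $\sigma^{(2)}$, $\det_2$ and $P$ are literally term-by-term.
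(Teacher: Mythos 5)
Your proof is correct and takes essentially the same route as the paper's: part (1) is a degree count in the variables of the first block (at most $2$ in any term of $\det_2\nu$ or of $P^2$, versus the degree $3$ of $a_ia_ja_l$), and part (2) is term-by-term bookkeeping modulo $a_l$. The only difference is level of detail --- the paper dismisses part (2) with ``follows from the definitions'', whereas you spell out the multilinear expansion $\det(\sigma^{(2)}(X')+E)$ and the sign-preserving bijection on good subshapes; both are fine.
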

Thanks to Lemma \ref{lem: no triple entries}, from now on we can restrict our attention to sequences $\nu$ with no triple entries. Now we want to take care of the case in which an entry appears only once. We find it useful to focus on a particular monomial.
\begin{definition}
	Assume that $\nu$ has no triple entries. If $\mu=(\nu_{i_1},\dots,\nu_{i_\ell})$ is a subsequence of $\nu$, we set
	\[e(i_j)=
	\begin{cases}
	1 & \mbox{if $\nu_{i_j}$ appears twice in } \mu\\
	2 & \mbox{if $\nu_{i_j}$ appears onece in } \mu\\
	\end{cases}\]
	and we define the monomial
	\[M_{\mu}=\prod_{j=1}^{\ell}a_{i_j}^{e(i_j)}\in\C[a_1,\dots,a_m].\]
\end{definition}

%In particular, we will see how it pays off to focus on the monomial $M_\nu$. 
Since we are assuming that $\nu$ has no triple entries, $\deg(M_\nu)=2d$.

\begin{lemma}\label{lem: it is enough to look at M} %The monomial $M_\nu$ is a summand of both $P^2$ and $\det_2\nu$. 
If a monomial $M$ appears in either $P^2$ or $\det_2\nu$ and it is divisible by all variables $a_1,\dots,a_m$, then $M=M_\nu$.
	\begin{proof}
		Let $M$ be such a monomial. By Lemma \ref{lem: no triple entries}, $M\mid M_\nu$. Indeed, if $\nu_i$ appears in $\nu$ at most once, then $a_i^3\nmid M$. If $\nu_i=\nu_j$ appears exactly twice then $a_ia_j\mid M$, but $a_i^2,a_j^2\nmid M$.
		As the degrees are the same, equality follows. 
	\end{proof}
\end{lemma}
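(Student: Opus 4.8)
The plan is to reduce the claim to a bound on the exponent of each single variable in $M$, and then quote Lemma~\ref{lem: no triple entries}. Both $P^2$ and $\det_2\nu$ are homogeneous of degree $2d$, and $M_\nu$ also has degree $2d$: indeed, since $\nu$ has no triple entries, each value in $\{1,\dots,d\}$ contributes exactly $2$ to $\deg M_\nu$ (either a single $a_p^2$ if it occurs once in $\nu$, or a product $a_p a_q$ if it occurs twice). Writing $M_\nu=\prod_{p=1}^m a_p^{e(p)}$ with $e(p)=2$ when $\nu_p$ occurs once in $\nu$ and $e(p)=1$ when it occurs twice, it therefore suffices to prove the divisibility $M\mid M_\nu$, i.e.\ that the exponent of $a_p$ in $M$ is at most $e(p)$ for every $p$; equality of degrees then forces $M=M_\nu$.

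First I would treat a position $p$ whose value $\nu_p$ occurs only once in $\nu$, where I must show $a_p^3\nmid M$. Apply Lemma~\ref{lem: no triple entries}(1) with the (not necessarily distinct) triple of indices $i=j=l=p$, which trivially satisfies $\nu_i=\nu_j=\nu_l$: it gives $P^2_{|M}=(\det_2\nu)_{|M}=0$ as soon as $a_p^3=a_ia_ja_l$ divides $M$. Since $M$ has nonzero coefficient in the polynomial under consideration, $a_p^3\nmid M$, so the exponent of $a_p$ in $M$ is at most $2=e(p)$. Next, for a value occurring twice, say $\nu_p=\nu_q$ with $p\ne q$ (so $e(p)=e(q)=1$), I must show $a_p^2\nmid M$ and $a_q^2\nmid M$. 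Apply Lemma~\ref{lem: no triple entries}(1) to the triple $i=j=p$, $l=q$ (again $\nu_i=\nu_j=\nu_l$): it shows that $a_p^2a_q\mid M$ would force the coefficient of $M$ to vanish. But $M$ is divisible by every variable by hypothesis, in particular by $a_q$; hence $a_p^2\mid M$ would give $a_p^2a_q\mid M$, a contradiction. So $a_p^2\nmid M$, and symmetrically $a_q^2\nmid M$. In all cases the exponent of $a_p$ in $M$ is at most $e(p)$, so $M\mid M_\nu$, and comparing degrees yields $M=M_\nu$.

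There is no serious obstacle here; the only point requiring care is the bookkeeping in choosing the right "triples" of not-necessarily-distinct indices to feed into Lemma~\ref{lem: no triple entries}, and observing that the hypothesis that \emph{all} variables divide $M$ is exactly what upgrades the vanishing statements of that lemma to sharp exponent bounds. (A more self-contained route would extract directly from Lemma~\ref{lem:CombDescription} that $a_p$ occurs only in the row and column of $\sigma^{(2)}(X)$ indexed by $\nu_p$ and that within a single matrix entry every variable has total degree at most $2$, but invoking Lemma~\ref{lem: no triple entries} is cleaner.)
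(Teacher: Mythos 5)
Your proof is correct and follows essentially the same route as the paper's: both reduce the claim to showing $M\mid M_\nu$ by invoking Lemma~\ref{lem: no triple entries} with the appropriate (possibly repeated) triples of indices, and then conclude by comparing degrees. The only difference is that you spell out explicitly which triples ($i=j=l=p$ for a once-occurring symbol, $i=j=p,\,l=q$ for a twice-occurring one) and where the hypothesis "divisible by all variables" is used, which the paper leaves implicit.
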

 The next results will explain what happens to the coefficient of $M_\nu$ when $\nu$ has a non-repeated entry.
\begin{lemma}\label{lem:nonrepeated entry P}
	Assume $\nu$ contains at least one non-repeated entry. Suppose that the last one occurs in $\nu_{m-l}$. Let $\nu_0$ be the sequence obtained from $\nu$ by discarding $\nu_{m-l}$ and let $P_0$ be the associated polynomial. Then
	$$(P(a)^2)_{|M_\nu}=(-1)^l(P_0(a)^2)_{|M_{\nu_0}}.$$
	\begin{proof}
		By Remark \ref{rmk: we can relabel}, we can assume that $\nu_{m-l}=d$. Since $M_\nu$ is a monomial of $P^2$, to compute $P^2_{|M_\nu}$ we have to consider the contribution of all pairs $(\alpha,\beta)$ of good subshapes of $\nu$. %But $M_\nu$ is the monomial corresponding to the whole sequence. 
		Let $(\alpha,\beta)$ be a pair contributing to $M_\nu$. The variable of any non-repeated entry - including $d$ - has to appear in both $\alpha$ and $\beta$. On the other hand, the variable of an entry that appears twice appears once in $\alpha$ and once in $\beta$. Therefore there is a bijection between pairs $(\alpha,\beta)$ of good subshapes of $\nu$ contributing to $M_\nu$ and pairs $(\alpha_0,\beta_0)$ of good subshapes of $\nu_0$ contributing to $M_{\nu_0}$, where $\alpha_0$ is the sequence obtained from $\alpha$ by removing $d$, and $\beta_0$ is defined in a similar way from $\beta$. If we set now $a$ (resp. $b$) to be the number of entries of $\alpha$ (resp $\beta$) after the discarded one, then 
		\begin{align*}
	(P(a)^2)_{|M_\nu}&=\sum_{(\alpha,\beta)}\sgn(\alpha)\sgn(\beta)=\sum_{(\alpha_0,\beta_0)}(-1)^a\sgn(\alpha_0)(-1)^b\sgn(\beta_0)\\
	&=(-1)^l\sum_{(\alpha_0,\beta_0)}\sgn(\alpha_0)\sgn(\beta_0)=(-1)^l(P_0(a)^2)_{|M_{\nu_0}}.\qedhere
		\end{align*}
	\end{proof}
\end{lemma}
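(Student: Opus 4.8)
The plan is to expand the coefficient $(P(a)^2)_{|M_\nu}$ as a signed count of ordered pairs of good subshapes and to match it bijectively with the analogous count for $\nu_0$. First I would use Remark~\ref{rmk: we can relabel} to assume $\nu_{m-l}=d$, and recall that (by the reduction established in Lemma~\ref{lem: no triple entries}) $\nu$ has no triple entries, so $M_\nu=\prod_i a_i^{e(i)}$ with $e(i)\in\{1,2\}$ and $\deg M_\nu=2d$. Writing $P(a)^2=\sum_{\alpha,\beta}(\sgn\alpha)(\sgn\beta)\prod_{i\in\alpha}a_i\prod_{i\in\beta}a_i$, the first step is to describe the pairs $(\alpha,\beta)$ that contribute to $M_\nu$: since each good subshape uses every value $1,\dots,d$ exactly once, for a value occurring only once in $\nu$ both $\alpha$ and $\beta$ must pick its unique position (producing a factor $a_i^2$), whereas for a value occurring at two positions $i<i'$ exactly one of $\alpha,\beta$ picks $i$ and the other picks $i'$ (producing $a_ia_{i'}$); this is Lemma~\ref{lem: it is enough to look at M} made precise at the level of pairs.

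Since $d$ occurs only at position $m-l$, every contributing pair has $m-l\in\alpha\cap\beta$, so deleting that position from both members yields a pair $(\alpha_0,\beta_0)$ of good subshapes of $\nu_0$; I would check this is a bijection onto the pairs contributing to $M_{\nu_0}$ (the inverse re-inserts the value $d$ at position $m-l$). For the signs, because $d$ is the largest value, deleting it from the one-line notation of $\alpha$ removes exactly the inversions it forms with the entries to its right, so $\sgn\alpha=(-1)^{a}\sgn\alpha_0$ where $a$ is the number of entries of $\alpha$ occurring after position $m-l$, and likewise $\sgn\beta=(-1)^{b}\sgn\beta_0$. Hence $(P(a)^2)_{|M_\nu}=\sum_{(\alpha_0,\beta_0)}(-1)^{a+b}(\sgn\alpha_0)(\sgn\beta_0)$, and everything comes down to showing $a+b\equiv l\pmod 2$ for every contributing pair.

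This parity identity is the step I expect to be the main obstacle, though it is elementary. The positions $m-l+1,\dots,m$ all carry repeated values; I would go through them one at a time. For such a position $i$, its partner occurrence $i'$ (with $\nu_{i'}=\nu_i$) either lies again in $\{m-l+1,\dots,m\}$, in which case the two occurrences split between $\alpha$ and $\beta$ and jointly add $2$ to $a+b$ and $2$ to $l$, or it lies before $m-l$, in which case only $i$ sits after $m-l$, adding $1$ to exactly one of $a,b$ and $1$ to $l$. In either case the contributions to $a+b$ and to $l$ have equal parity, so $(-1)^{a+b}=(-1)^l$ uniformly over contributing pairs. Factoring this constant out of the sum gives
\[(P(a)^2)_{|M_\nu}=(-1)^l\sum_{(\alpha_0,\beta_0)}(\sgn\alpha_0)(\sgn\beta_0)=(-1)^l(P_0(a)^2)_{|M_{\nu_0}},\]
which is the claim. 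Finally I would note that the degenerate possibilities — e.g. $\nu_0$ admitting no good subshape contributing to $M_{\nu_0}$ — are harmless, since then both sides of the asserted equality vanish.
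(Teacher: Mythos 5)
Your proof is correct and follows essentially the same route as the paper: assume $\nu_{m-l}=d$, set up the bijection $(\alpha,\beta)\leftrightarrow(\alpha_0,\beta_0)$ obtained by deleting the forced occurrence of $d$ from both good subshapes, and track the sign change $\sgn\alpha=(-1)^a\sgn\alpha_0$, $\sgn\beta=(-1)^b\sgn\beta_0$. The only difference is that you spell out the justification for $(-1)^{a+b}=(-1)^l$ (indeed $a+b=l$, since each of the $l$ positions after $m-l$ carries a repeated symbol and so lies in exactly one of $\alpha,\beta$), a step the paper asserts without elaboration.
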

Before we prove the analogous statement for the polynomial $\det_2\nu$, we introduce a combinatorial interpretation of the coefficient ${\det}_2(\nu)_{|M_\nu}$. %Let us associated to $\nu$ a directed graph. Consider a set of $m$ vertices, ordered from left to right, corresponding to $(\nu_1,\dots,\nu_m)$. All the edges will only go from left to right. For every monomial appearing in the $(i,j)$-th entry of the signature matrix, If $i\neq j$, then such a monomial is the product of two variables corresponding to $i$ and $j$, where $i$ preceeds $j$ in $\nu$. This will be represented by an arrow in the word $\nu$ from $i$ to $j$.for $i=j$ either as above or half of the square of the variable corresponding to $i$. This will be represented as a loop over the corresponding variable.
By Laplace expansion, the contributions to ${\det}_2(\nu)_{|M_\nu}$ come from directed graphs (with possible loops) with vertices corresponding to symbols in $\nu$ and:
\begin{itemize}
	\item if a symbol $i$ appears twice in $\nu$ then precisely one such vertex is outgoing and one is incoming;
	\item if a symbol $i$ appears once in $\nu$ then it has degree two and has one incoming and one outgoing edge - this is the only case where a vertex can be a loop;
	\item all edges are from left to right.
\end{itemize}
Each such graph contributes $\pm \frac{1}{2^m}$ where $m$ is the number of loops and the sign is the sign of the corresponding permutation.
\begin{example}
	Let $\nu=(1,2,1)$. We have two possible graphs. One with edges $(1,2)$ and $(2,1)$. It encodes the transposition $(1,2)$ and contributes with $-1$. The other one has an edge $(1,1)$ and a loop over two. It encodes the identity permutation and contributes $\frac{1}{2}$. 
\end{example}
\begin{lemma}\label{lem:nonrepeated entry det}
	%Suppose there are $l$ symbols in $\nu$ appearing after $d$ (which appears just once).% Suppose the variable $a$ corresponds to the symbol $d$. 
	With the same hypothesis as in Lemma \ref{lem:nonrepeated entry P},
	$${\det}_2(\nu)_{|M_\nu}=\frac{(-1)^l}{2}\cdot {\det}_2(\nu_0)_{|M_{\nu_0}}.$$
\end{lemma}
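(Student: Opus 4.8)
The plan is to express $({\det}_2\nu)_{|M_\nu}$ through a rank-one perturbed determinant and then specialise an auxiliary variable to $-1$. First I would invoke Remark~\ref{rmk: we can relabel} to assume the last non-repeated entry of $\nu$ is the symbol $d$, so $\pi_d=\{m-l\}$; since $\nu_{m-l}$ is the \emph{last} singleton and $\nu$ has no triple entries (the reduction already made), every position $m-l+1,\dots,m$ carries a symbol occurring exactly twice in $\nu$, hence also in $\nu_0$. Writing $\sigma:=\sigma^{(2)}(X)$, Lemma~\ref{lem:CombDescription} shows that the last row and column of $\sigma$ are $\sigma_{dj}=a_{m-l}R_j$, $\sigma_{jd}=a_{m-l}C_j$ for $j<d$, and $\sigma_{dd}=\tfrac12 a_{m-l}^2$, with $R_j:=\sum_{q\in\pi_j,\,q>m-l}a_q$ and $C_j:=\sum_{p\in\pi_j,\,p<m-l}a_p$; the complementary $(d-1)\times(d-1)$ block is exactly $\sigma':=\sigma^{(2)}(X_0)$ for the shape $\nu_0$. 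The block-determinant identity $\det\sigma=\tfrac12 a_{m-l}^2\det\sigma'-(a_{m-l}R)^{\mathsf T}\mathrm{adj}(\sigma')(a_{m-l}C)$ then gives $\det\sigma=a_{m-l}^2\bigl(\tfrac12\det\sigma'-R^{\mathsf T}\mathrm{adj}(\sigma')C\bigr)$; since $M_\nu=a_{m-l}^2 M_{\nu_0}$ and the bracket involves no $a_{m-l}$, I obtain
\[
({\det}_2\nu)_{|M_\nu}=\tfrac12\,{\det}_2(\nu_0)_{|M_{\nu_0}}-\bigl(R^{\mathsf T}\mathrm{adj}(\sigma')C\bigr)_{|M_{\nu_0}}.
\]
Thus the lemma reduces to the identity $\bigl(R^{\mathsf T}\mathrm{adj}(\sigma')C\bigr)_{|M_{\nu_0}}=\tfrac{1-(-1)^l}{2}\,{\det}_2(\nu_0)_{|M_{\nu_0}}$.

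To establish that identity I would introduce an auxiliary variable $w$ and the matrix $\sigma'_w:=\sigma'+(w-1)CR^{\mathsf T}$, which is obtained from $\sigma'$ by multiplying by $w$ each \emph{straddling} monomial, i.e.\ each $a_pa_q$ with $p<m-l<q$: indeed $C_iR_j$ is precisely the straddling part of $\sigma'_{ij}$. On one hand, the matrix determinant lemma gives $\det\sigma'_w=\det\sigma'+(w-1)R^{\mathsf T}\mathrm{adj}(\sigma')C$. On the other hand, in the Laplace/graph description of ${\det}_2(\nu_0)_{|M_{\nu_0}}$ recalled above, each contributing graph $G_0$ now contributes $w^{c(G_0)}$ times its former signed weight $\mathrm{contr}(G_0)$, where $c(G_0)$ is the number of straddling edges of $G_0$; hence $(\det\sigma'_w)_{|M_{\nu_0}}=\sum_{G_0}w^{c(G_0)}\mathrm{contr}(G_0)$. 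The key point is the parity invariant $c(G_0)\equiv l\pmod 2$ for every contributing $G_0$: each of the $l$ tail positions $m-l+1,\dots,m$ has exponent $1$ in $M_{\nu_0}$, so it is an endpoint of exactly one edge of $G_0$ and never sits in a half-loop, and counting tail endpoints over all edges yields $2x+c(G_0)=l$, where $x$ is the number of edges with both endpoints among the tail positions. Evaluating both expressions for $(\det\sigma'_w)_{|M_{\nu_0}}$ at $w=-1$ and using this parity gives $(-1)^l\,{\det}_2(\nu_0)_{|M_{\nu_0}}={\det}_2(\nu_0)_{|M_{\nu_0}}-2\bigl(R^{\mathsf T}\mathrm{adj}(\sigma')C\bigr)_{|M_{\nu_0}}$, which is exactly the identity needed; substituting it into the displayed formula yields $({\det}_2\nu)_{|M_\nu}=\tfrac{(-1)^l}{2}\,{\det}_2(\nu_0)_{|M_{\nu_0}}$.

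The cofactor/block-determinant expansion and the matrix determinant lemma are routine. The main obstacle is the parity statement $c(G_0)\equiv l\pmod 2$, which is where the graph formalism must be handled with care: one has to be precise that a contributing graph's tail positions each carry a doubly-occurring symbol of $\nu_0$ (so their exponent in $M_{\nu_0}$ is $1$, forbidding half-loops at them), and that an edge has exactly one tail endpoint iff it is straddling, so that $2x+c=l$ holds. Granting this, the specialisation $w=-1$ does the rest, and the boundary case $l=0$ — in which the symbol $d$ is forced to be a loop and only the term $\tfrac12\,{\det}_2(\nu_0)_{|M_{\nu_0}}$ survives — is recovered automatically.
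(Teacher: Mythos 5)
Your proof is correct, and it takes a genuinely different route from the paper's. The paper argues by induction on the position of $d$: it treats the cases where $d$ is last or next-to-last by direct Laplace/graph inspection, and for earlier positions shows that sliding $d$ two places forward preserves the coefficient, pairing off contributing graphs by edge swaps and tracking sign changes via the cycle decomposition of the underlying permutation. You instead perform a single algebraic reduction: the block expansion $\det\sigma = a_{m-l}^2\bigl(\tfrac12\det\sigma' - R^{\mathsf T}\mathrm{adj}(\sigma')C\bigr)$ isolates the coefficient as $\tfrac12\det_2(\nu_0)_{|M_{\nu_0}}$ minus a cross-term, and then the matrix determinant lemma applied to the rank-one deformation $\sigma'_w = \sigma' + (w-1)CR^{\mathsf T}$, combined with the parity invariant $c(G_0)\equiv l\pmod 2$ for the number of straddling edges (each tail position is an endpoint of exactly one non-loop edge, giving $2x + c = l$), packages the cross-term in one stroke at $w=-1$. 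Both proofs rest on the same graph model of $\det_2(\nu_0)_{|M_{\nu_0}}$, but where the paper uses local transposition moves and sign-bookkeeping, you use a global rank-one-perturbation identity and a parity count. Your version isolates the exact contribution of the $R$-$C$ cross-term and avoids the case split entirely; the paper's local-move technique is, however, reused later in the Pfaffian reduction, which gives that presentation a certain economy. A minor presentational remark: it is worth stating explicitly that the matrix determinant lemma forces $(\det\sigma'_w)_{|M_{\nu_0}}$ to be affine in $w$, so that the graph-side sum $\sum_{G_0} w^{c(G_0)}\mathrm{contr}(G_0)$ must in fact collapse to an affine polynomial — a small miracle that deserves a sentence, even though the argument as written never actually needs it (it only evaluates at $w=1$ and $w=-1$).
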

\begin{proof}
	
	We look at the possible cases for the position of $d$. 
	
	If $d$ is the last symbol then the $d$-th column of the second signature matrix has only one nonzero entry, which equals $\frac{1}{2}$ times the square of the variable associated to $d$. Applying Laplace expansion we obtain the formula in this case.  
	
	We now assume that $d$ is the last but one entry. The contributing graphs are of two types. 
	
	If $d$ is a loop then (by forgetting the loop) we obtain graphs on $\nu_0$. Hence, the contribution of those graphs is $\frac{1}{2}({\det}_2(\nu_0))_{|M_{\nu_0}}$.
	
	If $d$ is not a loop, it must have an incoming edge, say $(a,d)$ and an outgoing edge $(d,b)$, where $b$ must be the last symbol in $\nu$. We may replace this by an edge $(a,b)$. In the cycle presentation of the permutation we removed $d$ from one cycle, i.e.~changed the sign of the permutation. As we did not change the number of loops the contribution equals $-({\det}_2(\nu_0))_{|M_{\nu_0}}$. Summing the two contributions we obtain the result in this case. 
	
	We may now assume that $d$ is the last letter in $\nu$ that appears once and further there are at least two symbols after it, i.e.~$\nu=\dots dab\dots$. Let $\nu'=\dots abd$ be $\nu$ with $d$ moved two places forward. We will prove that $({\det}_2(\nu))_{|M_{\nu}}=({\det}_2(\nu'))_{|M_{\nu'}}$. By induction this will finish the proof of the lemma.
	
	We may identify graphs contributing to $({\det}_2(\nu))_{|M_{\nu}}$ with those contributing to $({\det}_2(\nu'))_{|M_{\nu'}}$ with the exception of graphs for which there is an edge from $d$ either to $a$ or $b$. Further, if there is an edge $(d,a)$ and the considered $b$ is an outgoing vertex, the encoded permutation is $x\rightarrow d \rightarrow a$, $b\rightarrow c$ for some $x$ and $c$. We may associate to it a graph on $\nu'$ with edges $(x,a),(b,d),(d,c)$. This does not change the sign of permutation. We proceed in the same way if there is an edge $(d,b)$ and $a$ is outgoing. The remaining graphs are those where there is an edge from $d$ to $a$ or $b$ and the other vertex is incoming. We will prove that the sum of contributions of such graphs equals zero. 
	
	Consider a graph with edges $(d,a),(c,b)$ for some $c$. We associate to it a graph with edges $(c,a),(d,b)$. In the cycle decomposition of the permutation this operation either joins two cycles or decomposes one cycle into two, i.e.~changes the sign of the permutation. In particular, the contribution of each pair equals zero. 
	
	We also see that we obtain all graphs on $\nu'$, apart from those for which there is an edge from $a$ or $b$ to $d$ and the other vertex is outgoing. Just as above one can show that the contribution of such graphs equals zero.
	%
	%
	%Idea:
	% Otherwise move d two to the right: pair not pairable graphs.
\end{proof}
We can now reduce to the case in which every entry of $\nu$ appears exactly twice.
\begin{corollary}\label{cor: reduce to double entries} Assume Theorem \ref{thm:det is a square} holds for sequences in which every entry appears exactly twice. Then it holds for every sequence.%	If the sequence $\nu$ has a non-repeated entry, then	$$P(a)^2=2^d{\det}_2\nu.$$
\begin{proof}
	We compare the coefficients of monomials in both polynomials. If a monomial is different from $M_\nu$, then it does not contain one of the variables. In this case Lemma \ref{lem: it is enough to look at M} allows us to consider a subsequence of $\nu$ and conclude by induction on $\ell(\nu)$. Consider then $M_\nu$. By repeatedly applying Lemmas \ref{lem:nonrepeated entry P} and \ref{lem:nonrepeated entry det}, we can discard all non-repeated entries and conclude by hypothesis.
\end{proof}\end{corollary}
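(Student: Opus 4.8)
The plan is to deduce the identity $2^d\det_2\nu=P(a)^2$ of Theorem \ref{thm:det is a square} for an arbitrary shape $\nu$ (with $d=\max\nu$) from the same identity assumed for all shapes whose every symbol occurs exactly twice. Both sides are homogeneous of degree $2d$ in $a_1,\dots,a_m$, so it suffices to check equality of the coefficient of each degree-$2d$ monomial $M$, i.e.\ that $2^d(\det_2\nu)_{|M}=(P^2)_{|M}$. I would argue by induction on $\ell(\nu)$, the base case being the empty shape (where the identity reads $1=1$). For the inductive step: if every symbol of $\nu$ already occurs exactly twice the identity holds by hypothesis; otherwise I distinguish the monomials $M$ according to whether they are divisible by all of $a_1,\dots,a_m$.

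Suppose first that $a_j\nmid M$ for some $j$. If the symbol $\nu_j$ occurs only at position $j$, then Lemma \ref{lem:lone entries are factors of slices} gives $a_j^2\mid\det_2\nu$, and since every good subshape of $\nu$ uses that unique occurrence we also get $a_j\mid P$, hence $a_j^2\mid P^2$; therefore $(\det_2\nu)_{|M}=(P^2)_{|M}=0$ and there is nothing to prove. If instead $\nu_j$ occurs at least twice, let $\nu'$ be $\nu$ with the $j$-th entry deleted; then $\max\nu'=\max\nu=d$ and $\ell(\nu')<\ell(\nu)$. Setting $a_j=0$ in the entries of $\sigma^{(2)}(X)$ produces the matrix $\sigma^{(2)}(X')$, where $X'$ is the axis path of shape $\nu'$ (a zero-length step is no step), so $\det_2\nu$ and $P$ specialize to $\det_2\nu'$ and $P'$, whence $(\det_2\nu)_{|M}=(\det_2\nu')_{|M}$ and $(P^2)_{|M}=((P')^2)_{|M}$ for our $a_j$-free $M$ (this is the content of Lemma \ref{lem: no triple entries}(2)). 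The induction hypothesis $2^d\det_2\nu'=(P')^2$ then finishes this case by comparing coefficients of $M$.

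Suppose now that $M$ is divisible by all of $a_1,\dots,a_m$. If some symbol of $\nu$ occurs at three distinct positions $i,j,l$, then $a_ia_ja_l\mid M$, so $(\det_2\nu)_{|M}=(P^2)_{|M}=0$ by Lemma \ref{lem: no triple entries}(1). Otherwise $\nu$ has no triple entry, $M_\nu$ is defined, and Lemma \ref{lem: it is enough to look at M} shows that the only monomial divisible by all variables that can occur in $P^2$ or $\det_2\nu$ is $M_\nu$ itself; so we may take $M=M_\nu$. Since we are not in the ``every symbol twice'' case, $\nu$ has a non-repeated entry; let $\nu_{m-l}$ be the last one and $\nu_0$ the shape obtained by deleting it, so $\max\nu_0=d-1$ and $\ell(\nu_0)<\ell(\nu)$. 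Combining Lemma \ref{lem:nonrepeated entry det}, the induction hypothesis $2^{d-1}\det_2\nu_0=P_0^2$, and Lemma \ref{lem:nonrepeated entry P} gives
\[2^d(\det_2\nu)_{|M_\nu}=(-1)^l\,2^{d-1}(\det_2\nu_0)_{|M_{\nu_0}}=(-1)^l(P_0^2)_{|M_{\nu_0}}=(P^2)_{|M_\nu},\]
which closes the induction.

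Most of this is bookkeeping: relabeling the alphabet (already packaged into Remark \ref{rmk: we can relabel} and used inside the cited lemmas) and the degenerate short shapes. The point that must be got right is the architecture of the case split — in particular the observation that a reduction which lowers $\max\nu$, namely deleting a non-repeated entry, is needed only for the single monomial $M_\nu$, and there Lemma \ref{lem:nonrepeated entry det} supplies precisely the factor $\tfrac12$ that compensates for $2^d$ becoming $2^{d-1}$; for every other monomial the variable being removed belongs to a still-surviving symbol, so $\max\nu$ is unchanged and the plain induction hypothesis applies. The one earlier statement I would re-read carefully is Lemma \ref{lem: no triple entries}, to confirm that its ``not necessarily distinct'' indices genuinely cover the case of a symbol occurring exactly twice, since that is what legitimizes the step-deletion step above.
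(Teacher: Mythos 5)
Your proposal is correct and follows essentially the same route as the paper: compare coefficients monomial by monomial, handle monomials missing some variable by deleting an entry and inducting on $\ell(\nu)$, and handle $M_\nu$ by stripping non-repeated entries via Lemmas \ref{lem:nonrepeated entry P} and \ref{lem:nonrepeated entry det} until the all-entries-twice hypothesis applies. Your version is just more explicit about the triple-entry case and the $2^d$ versus $2^{d-1}$ bookkeeping (and note that your twice-occurring-entry deletion is justified by your direct specialization argument rather than by Lemma \ref{lem: no triple entries}(2) as literally stated).
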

From now on we may assume that each of the letters $1,\dots,d$ appears exactly twice in $\nu$. In particular, $d=2m$. As in the proof of Corollary \ref{cor: reduce to double entries}, we can assume, by induction on $\ell(\nu)$, that $P^2_{|M}=\det_2\nu_{|M}$ for every monomial $M\neq M_\nu$. In order to prove that the two polynomials coincide, it is enough to find a point $q\in\R^{2m}$ such that $M_\nu(q)\neq 0$ and $P^2(q)=\det_2\nu(q)$. Since we suppose that every entry of $\nu$ appears exactly twice, we can define $q\in\R^{2m}$ by
\[
q_i=\begin{cases}
1 & \mbox{if $\nu_i$ appears for the first time in the $i$-th entry of } \nu,\\
-1 & \mbox{if $\nu_i$ appears for the second time in the $i$-th entry of } \nu.
\end{cases}\]
The following is a straightforward consequence of Lemma \ref{lem:CombDescription}.
\begin{lemma}
	The matrix $\sigma^{(2)}(q)$ is skew-symmetric. If $i\neq j$, then
	\[
	\sigma^{(2)}(q)_{ij}=\begin{cases}
-1  & \mbox{if the subsequence of $\nu$ with the symbols $i$ and $j$ is } jiji,\\
1 & \mbox{if the subsequence of $\nu$ with the symbols $i$ and $j$ is } ijij,\\
0 & \mbox{otherwise}.
	\end{cases}\]
\end{lemma}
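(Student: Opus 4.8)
The plan is to specialize the combinatorial formula of Lemma~\ref{lem:CombDescription} to $k=2$ and then read off each entry from the cyclic order of the four relevant positions of $\nu$. Write $\pi_i=\{p_i<p_i'\}$ for the two positions in $\nu$ carrying the symbol $i$; by the definition of $q$ we have $q_{p_i}=1$ and $q_{p_i'}=-1$ for every $i\in\{1,\dots,d\}$. For $i\neq j$ the two indices occurring in the sum of Lemma~\ref{lem:CombDescription} are distinct, so every factor $\tfrac1{s_1!\cdots s_m!}$ equals $1$ and
\[\sigma^{(2)}(q)_{ij}=\sum_{\substack{j_1\in\pi_i,\ j_2\in\pi_j\\ j_1<j_2}}q_{j_1}q_{j_2}.\]
Hence $\sigma^{(2)}(q)_{ij}$ depends only on the relative order of $p_i,p_i',p_j,p_j'$, equivalently on the length-$4$ subword that $\nu$ induces on the alphabet $\{i,j\}$, which is one of $iijj$, $ijij$, $ijji$, $jiij$, $jiji$, $jjii$.

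I would then dispatch the six orderings by a direct count of the pairs $(j_1,j_2)$ with $j_1\in\pi_i$, $j_2\in\pi_j$ and $j_1<j_2$. For $iijj$ all four pairs occur and the sum factors as $(q_{p_i}+q_{p_i'})(q_{p_j}+q_{p_j'})=0$; the orderings $ijji$, $jiij$, $jjii$ likewise give $0$ after a one-line computation. For $ijij$ the admissible pairs are $(p_i,p_j)$, $(p_i,p_j')$, $(p_i',p_j')$, with contribution $1\cdot1+1\cdot(-1)+(-1)\cdot(-1)=1$; for $jiji$ the only admissible pair is $(p_i,p_j')$, contributing $-1$. This is exactly the asserted table. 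For the skew-symmetry I would invoke the shuffle identity (Lemma~\ref{lem:shuffle identity}): since the sequence of lengths of $X$ is $q$, one has $\langle\sigma(X),i\rangle=q_{p_i}+q_{p_i'}=1-1=0$ for every symbol $i$, whence $\sigma^{(2)}(q)_{ij}+\sigma^{(2)}(q)_{ji}=\langle\sigma(X),i\rangle\langle\sigma(X),j\rangle=0$ and $2\,\sigma^{(2)}(q)_{ii}=\langle\sigma(X),i\rangle^2=0$; alternatively, interchanging the roles of $i$ and $j$ exchanges the orderings $ijij$ and $jiji$ and fixes the other four, and $\sigma^{(2)}(q)_{ii}=\tfrac12(q_{p_i}+q_{p_i'})^2=0$ directly from Lemma~\ref{lem:CombDescription}.

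I do not expect a genuine obstacle here: the statement is a bookkeeping exercise built on the two lemmas already available. The only point requiring attention is organizing, for each of the six orderings, which pairs satisfy $j_1<j_2$ together with the four fixed signs $q_{p_i}=q_{p_j}=1$ and $q_{p_i'}=q_{p_j'}=-1$; once this is laid out, the three possible verdicts $+1$, $-1$, $0$ fall out immediately, and the skew-symmetry is essentially free from the shuffle identity.
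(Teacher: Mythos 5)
Your proof is correct and matches what the paper intends: the lemma is stated as a "straightforward consequence of Lemma~\ref{lem:CombDescription}," and you have simply carried out the inspection — specialize the formula to $k=2$, observe that the factorials are all $1$ when $i\neq j$, and enumerate the six interleavings of $\pi_i\cup\pi_j$ with the signs $q_{p_i}=q_{p_j}=1$, $q_{p_i'}=q_{p_j'}=-1$. The shuffle-identity derivation of skew-symmetry (using $\langle\sigma(X),i\rangle=q_{p_i}+q_{p_i'}=0$ and $i\shuffle j=ij+ji$) is a tidy alternative to the direct count, but it does not change the character of the argument.
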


To finish the proof it is enough to show that $P(q)^2=2^d\det_2(\nu)(q)$. As the second signature matrix is skew-symmetric, it is enough to prove that $P(q)=2^{d/2}\Pf_2(\nu)(q)$, where $\Pf$ is the Pfaffian.
We note that when $\nu =(1,1,2,2,\dots,d,d)$ the claim is easy as both sides equal zero. The following two lemmas allow to reduce any $\nu$ to this case, finishing the proof.

Let $\nu_i,\nu_{i+1}$ be two consecutive entries in $\nu$. Let $\nu'$ be the sequence obtained from $\nu$ by switching $\nu_i$ and $\nu_{i+1}$, and let $q'\in\R^{2m}$ be the corresponding point, defined in the same way we defined $q$. Let $\nu''$ be the sequence obtained from $\nu$ by removing both occurrences of the symbol $\nu_i$ and both occurrences of the symbol $\nu_{i+1}$. In a similar fashion we define $q''\in\R^{2m-4}$. %Define $q'\in $ to be the point analogous to $q$ (i.e.~we forget the four coordinates corresponding to $a$'s and $b$'s).
By Remark \ref{rmk: we can relabel}, we may assume without loss of generality that $\nu_i=d-1$ and $\nu_{i+1}=d$.

\begin{lemma}
	Let%\[
	%e(i)=\begin{cases}
	%1 & \mbox{if $\nu_i$ appears for the first time in the $i$-th entry of } \nu\\
	%-1 & \mbox{if $\nu_i$ appears for the second time in the $i$-th entry of } \nu
	%\end{cases}\]
	%and
	\[e(d)=\begin{cases}
		1 & \mbox{if $d$ appears for the first time in the $(i+1)$-st entry of } \nu\\
		-1 & \mbox{if $d$ appears for the second time in the $(i+1)$-st entry of } \nu.
	\end{cases}\]
	In a similar way we define $e(d-1)$. Then
	\begin{enumerate}
		\item $\Pf_2(\nu)(q)=\Pf_2(\nu')(q)+e(d-1)e(d)\Pf_2(\nu'')(q'')$ and
		\item $P_\nu(q)=P_{\nu'}(q)+2e(d-1)e(d)P_{\nu''}(q')$.
	\end{enumerate}
	
\end{lemma}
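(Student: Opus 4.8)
\noindent\emph{Proof plan.} The idea is to prove both identities in parallel by localising everything at the two positions $i,i+1$. Throughout I write $q,q',q''$ for the points attached to $\nu,\nu',\nu''$ by the construction above; note that $q$ and $q'$ agree outside the slots $i,i+1$, where $q'_i=e(d)$ and $q'_{i+1}=e(d-1)$, while $q''$ is obtained from $q$ by forgetting the coordinates of $d-1$ and $d$ (deleting two letters does not change the first/second-occurrence status of the remaining ones). By Remark~\ref{rmk: we can relabel} we may assume $\nu_i=d-1$ and $\nu_{i+1}=d$; write $p$ (resp.\ $p'$) for the other occurrence of $d-1$ (resp.\ $d$) in $\nu$. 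The elementary observation used repeatedly is that, since $i$ and $i+1$ are consecutive, no occurrence of $p$, of $p'$, or of any letter $\le d-2$ lies strictly between them, so swapping $\nu_i\leftrightarrow\nu_{i+1}$ never changes the relative order of such a position with respect to the slot $\{i\text{ or }i+1\}$.

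\noindent\emph{The Pfaffian identity.} By the description of $\sigma^{(2)}(q)$ recorded above, each off-diagonal entry $(a,b)$ of the skew-symmetric matrix $\sigma^{(2)}(\nu)(q)$ depends only on the four-letter subword of the shape on the letters $a,b$, and the swap alters that subword only when $\{a,b\}=\{d-1,d\}$. Hence $\sigma^{(2)}(\nu)(q)$ and $\sigma^{(2)}(\nu')(q')$ agree except in the $(d-1,d)$ and $(d,d-1)$ positions, and a short case check over the six possible locations of the adjacent pair ``$(d-1)d$'' inside that subword shows that the $(d-1,d)$-entry changes by exactly $e(d-1)e(d)$. Since the Pfaffian is affine-linear in that single entry (together with its skew partner), expanding it gives $\Pf_2(\nu)(q)-\Pf_2(\nu')(q')=e(d-1)e(d)\,(-1)^{(d-1)+d+1}\,\Pf$ of $\sigma^{(2)}(\nu)(q)$ with rows and columns $d-1,d$ deleted. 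The sign is $+1$, and deleting the last two rows and columns leaves the top-left block, which by the same entrywise description equals $\sigma^{(2)}(\nu'')(q'')$; this is~(1).

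\noindent\emph{The identity for $P$.} Write $P_\nu(q)=\sum_\mu \sgn(\mu)\prod_j q_{i_j}$ over good subshapes and split the sum according to whether $\mu$ takes position $i$ or $p$ for $d-1$ and position $i+1$ or $p'$ for $d$; do the same for $\nu'$. By the observation above, the class of $\nu$ taking $(i\to d-1,\ p'\to d)$ matches term by term, preserving both $\sgn$ and the evaluated monomial, with the class of $\nu'$ taking $(i+1\to d-1,\ p'\to d)$ --- and symmetrically for the two classes that use the slot $\{i,i+1\}$ for exactly one of $d-1,d$ --- so these cancel in $P_\nu(q)-P_{\nu'}(q')$, as does the class avoiding both $i$ and $i+1$. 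The remaining class, of subshapes using $i$ for $d-1$ and $i+1$ for $d$, matches term by term with the corresponding class of $\nu'$ (using $i$ for $d$, $i+1$ for $d-1$) with the same monomial but \emph{opposite} sign, since the two largest letters now sit in the opposite order in two consecutive slots. Hence $P_\nu(q)-P_{\nu'}(q')$ is twice that class. Finally, such a subshape is ``$d-1$ at $i$, $d$ at $i+1$'' glued to a good subshape $\mu''$ of $\nu''$: inserting the two top letters in increasing order into two consecutive slots changes the number of inversions by an even amount, so $\sgn\mu=\sgn\mu''$, while $\prod_j q_{i_j}$ acquires the factor $q_iq_{i+1}=e(d-1)e(d)$. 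Summing over $\mu''$ gives $P_\nu(q)-P_{\nu'}(q')=2e(d-1)e(d)P_{\nu''}(q'')$.

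\noindent\emph{Main obstacle.} The structural part is routine; the real work is the sign bookkeeping --- verifying the ``$+e(d-1)e(d)$'' jump of the $(d-1,d)$-entry (a finite but slightly tedious case analysis), the sign flip for the class that occupies both slots $i,i+1$, and the parity of the inversions created when $d-1,d$ are inserted in increasing order into two consecutive slots.
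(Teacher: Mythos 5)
Your proof is correct and follows essentially the same route as the paper's: part (1) by observing that, after the swap, the only entry of the skew-symmetric second-signature matrix that changes is the $(d-1,d)$-entry, checking by cases that the jump equals $e(d-1)e(d)$, and invoking the Laplace expansion of the Pfaffian; part (2) by splitting the sum over good subshapes into four classes according to which of the slots $i,i+1$ they use for $d-1$ and $d$, showing three of the classes cancel, and computing that the remaining class contributes $2e(d-1)e(d)P_{\nu''}(q'')$ via the even-inversion count for inserting $d-1,d$ in increasing order into consecutive slots. You also correctly evaluate the right-hand sides at $q'$ (the point attached to $\nu'$) and $q''$ (the point attached to $\nu''$), whereas the lemma as printed writes $q$ and $q'$; these appear to be misprints, since for instance $\sigma^{(2)}(\nu')_{a,d-1}$ with $a\le d-2$ involves the coordinate at slot $i+1$, so the entrywise match with $\sigma^{(2)}(\nu)(q)$ needs $q'$ rather than $q$, and $q'\in\R^{\ell(\nu)}$ has the wrong dimension to be an argument of $P_{\nu''}$.
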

\begin{proof} 
\begin{enumerate}
	\item The change from $\nu$ to $\nu'$ changes the second signature matrix by replacing the lower right $2\times 2$ submatrix either from
	$\begin{pmatrix}
	0 & 0\\
	0&0
	\end{pmatrix}$
	to
	$\pm
	\begin{pmatrix}
	0 & 1\\
	-1 & 0
	\end{pmatrix}$
	or the other way round. The formula follows from the standard Laplace expansion for Pfaffians.
	\item The good underlinings that involve at most one of the exchanged $d-1$ and $d$ provide the same contribution both to $\nu$ and $\nu'$. It remains to investigate the contribution of good underlinings containing both $d-1$ and $d$.
	These contribute to $\nu$ and $\nu'$ with opposite signs and are exactly the contributions of $P_{\nu''}(q')$. The sign depends only on the property if the underlined variables we forget are taken with plus or minus.\qedhere
\end{enumerate}
\end{proof}

We conclude by induction on the number of permutations needed to transform $\nu$ to $1122\dots dd$ and the length of $\nu$.

 \bibliographystyle{alpha}
 \bibliography{biblio}
\end{document}